\documentclass[11pt]{amsart} 

\usepackage[margin=1in]{geometry} 
\usepackage[utf8]{inputenc} 
\usepackage{quiver}
\usepackage{amsmath} 
\usepackage{amsthm} 
\usepackage{tcolorbox}
\usepackage{xcolor}
\usepackage{tensor}
\usepackage{inconsolata} 
\usepackage[shortlabels]{enumitem} 
\setlist[enumerate]{
     itemsep  = 0.2cm,
       label  = {\upshape (\arabic*)},
         ref  = \arabic*,
    leftmargin  = *
} 
\usepackage{xcolor} 
\usepackage{tikz-cd} 
\tikzcdset{arrow style=tikz, diagrams={>={Computer Modern Rightarrow[width=5pt,length=3pt]}}} 
\usepackage[colorlinks,allcolors=blue]{hyperref} 
\usepackage[nameinlink]{cleveref} 
\usepackage[final]{microtype}
\usepackage{todonotes} 
\usepackage{mathtools}

\usepackage{CJKutf8} 

\numberwithin{equation}{section} 
\newtheorem{theorem}{Theorem}[section]
\newtheorem{definition}[theorem]{Definition}
\newtheorem{proposition}[theorem]{Proposition}
\newtheorem{lemma}[theorem]{Lemma}

\newtheorem{remark}[theorem]{Remark}

\newcommand{\C}{\mathbb{C}}

\newcommand{\R}{\mathbb{R}}
\newcommand{\Z}{\mathbb{Z}}
\newcommand{\N}{\mathbb{N}}

\newcommand{\Sb}{\mathbb{S}}
\newcommand{\Hb}{\mathbb{H}}

\newcommand{\Af}{\mathbf{A}}
\newcommand{\Bf}{\mathbf{B}}
\newcommand{\Cf}{\mathbf{C}}

\newcommand{\Ef}{\mathbf{E}}

\newcommand{\Jf}{\mathbf{J}}

\newcommand{\Pf}{\mathbf{P}}
\newcommand{\Qf}{\mathbf{Q}}
\newcommand{\Rf}{\mathbf{R}}

\newcommand{\Zf}{\mathbf{Z}}

\newcommand{\bh}{\widehat{b}}
\newcommand{\bl}{\overline{b}}

\newcommand{\Acal}{\mathcal{A}}
\newcommand{\Bcal}{\mathcal{B}}

\newcommand{\Lcal}{\mathcal{L}}

\newcommand{\Ncal}{\mathcal{N}}
\newcommand{\Pcal}{\mathcal{P}}
\newcommand{\Qcal}{\mathcal{Q}}
\newcommand{\Rcal}{\mathcal{R}}

\newcommand{\pdv}{\partial}
\DeclareMathOperator{\supp}{supp}
\DeclareUnicodeCharacter{030C}{\v}
\newcommand{\wH}{\widehat{H}}
\newcommand{\wB}{\widehat{B}}
\newcommand{\ep}{\varepsilon}
\newcommand{\cg}{\mathring{g}}
\newcommand{\ck}{\mathring{k}}
\newcommand{\cQ}{\mathring{Q}}
\newcommand{\bsigma}{\overline{\sigma}}
\newcommand{\bpi}{\overline{\pi}}
\newcommand{\tsigma}{\widetilde{\sigma}}
\newcommand{\tpi}{\widetilde{\pi}}

\newcommand*\dd{\mathop{}\!\mathrm{d}}

\DeclareMathOperator{\Ric}{Ric}

\DeclareMathOperator{\tr}{tr}
\DeclareMathOperator{\dv}{div}

\DeclareMathOperator{\Hess}{Hess}
\DeclareMathOperator{\SO}{SO}
\DeclareMathOperator{\so}{\mathfrak{so}}

\definecolor{Lavender}{HTML}{E6E6FA}
\definecolor{Melon}{HTML}{FFF4E8}
\definecolor{Neutrogena}{HTML}{FFE8EC}
\definecolor{Cream}{HTML}{F9F0E6}

\usepackage[style=ieee,citestyle=numeric,backend=biber,sorting=nyt,url=false]{biblatex}

\AtEveryBibitem{\clearfield{issn}}	
\AtEveryCitekey{\clearfield{issn}}
\AtBeginBibliography{\small}		

\DeclareFieldFormat{postnote}{#1}

\DeclareFieldFormat[article,inbook,incollection,inproceedings,patent,thesis,unpublished]{title}{\textit{#1\isdot}} 

\DeclareFieldFormat{journaltitle}{#1}  

\DeclareFieldFormat[book,inbook,incollection,inproceedings]{series}{#1} 

\defbibheading{references}[\refname]{%
\section*{#1}%
\addcontentsline{toc}{section}{References} %
\markboth{#1}{#1}
}

\title{Initial Data Gluing for Asymptotically Hyperbolic Spacetimes}
\author{Aren Martinian}
\date{\today}

\begin{document} 

\begin{abstract}
We give a new proof of Corvino-Schoen-type gluing, also known as gluing up to linear obstructions, for asymptotically hyperbolic spacetimes satisfying appropriate smallness conditions in arbitrary dimension.
The results retrieve and improve on previous known smallness requirements on initial data to conduct such gluing, including the work of Chrusci\'el and Delay. 
Our technique is highly versatile and well-suited for gluing results in highly anisotropic geometric settings.
\end{abstract}

\maketitle 

\tableofcontents

\section{Introduction}\label{introduction}
In classical general relativity, one of the most important problems is to understand the flexibility and limitations of solution to the Einstein Constraint equations
\begin{equation}\label{eqn:constraints_intro}
\begin{cases}
R(g) + (\tr_g K)^2 - |K|^2_g = 2\Lambda \\
\nabla^i_g K_{ij} - \nabla_j \tr_g K = 0.
\end{cases}
\end{equation}
More precisely, a natural question to ask is in what way such a solution can be deformed on a compact or localized region while preserving the geometry elsewhere.
One approach to this question is on \emph{initial data gluing}, which prescribes two initial data sets solving \eqref{eqn:constraints_intro} in disjoint regions and asks whether one can prescribe a global solution which interpolates between the two.
One such approach is the celebrated Corvino-Schoen-type gluing, pioneered in \cite{corvino2000scalar} and later refined in \cite{CorvinoSchoen2006}. 
Schematically, the approach reduces solving \eqref{eqn:constraints_intro} to a linear problem of the form $Pu=f$, where $P$ is the linearized Einstein constraint operator containing the linearized scalar curvature and momentum constraint equations. 
If $P$ is surjective, or equivalently if $P^*$ has empty cokernel, one can construct a right-inverse directly.
However, in general this is not the case, and the approach instead constructs a right-inverse up to a finite dimensional cokernel. 

Since this seminal work, several gluing results have appeared in the literature. For the asymptotically flat case, i.e. $\Lambda=0$, we direct the interested reader to the excellent review articles \cite{Carlotto2021,Cortier2012}.
Some select work includes gluing an asymptotically flat initial data set to the exterior of initial data for Kerr as in \cite{ChruscielCorvinoIsenberg2011,ChruscielDelay2003}, gluing an interior region to an exterior region without changing each prescribed piece as in \cite{BieriChrusciel2017,Chrusciel2017}, and more recently the microlocal techniques of \cite{Hintz2023a,Hintz2023b,Hintz2024}.
In the hyperbolic setting $\Lambda = -d(d-1)/2$, the first main application of Corvino-Schoen-type gluing can be found in \cite{chrusciel2007gluing}.
In this paper, they consider an annulus of fixed radial size 
\[A_{\delta,4\delta} = \{\sqrt{1-8\delta} \leq R \leq \sqrt{1-2\delta}\}\] 
with coordinate $R$ on the Poincar\'e Ball. 
Instead of inverting the operator directly, they study a fourth-order operator of the form $P^*(\rho P u) =f$, where $\rho$ is a suitable weight-function decaying at infinity, and obtain estimates in weighted Sobolev spaces.
Quantitatively, for dimension $d$ they obtain a solution operator bound for the Hamiltonian constraint equations of $O(\delta^{-4})$, which when running a fixed point argument corresponds to requiring a smallness bound on the order of $\min\{\delta^8, \delta^{(8+d)/2}\}$ to close their estimates. 
Since then, \cite{Cortier2010} demonstrated the importance of understanding the finite dimensional obstruction space using charges, and \cite{ChruscielDelay2018} describe a Carlotto-Schoen type gluing approach as in \cite{CarlottoSchoen2016} that is localized to a particular angular sector.

More recently, the authors of \cite{mao2023,MaoTao2022} developed an approach in the asymptotically flat case that allows for the construction of a Bogovskii-type operator as a direct right inverse up to cokernel with desired regularity and support properties. 
Their general theory and framework were presented with examples for a much broader class of equations in \cite{isett2025integral} with more forthcoming. 
This powerful method was observed by \cite{ChruscielCogoNuetzi2025}, and they present a short note explaining that the Bogovskii-type operator construction is not unique to the asymotptically flat setting, but also holds in the asymptotically hyperbolic one, providing qualitative statements on initial data gluing for asymptotically hyperbolic spacetimes using the Bogovskii-type solution operator construction.

Inspired by the previous results, we apply the support preserving Bogovskii-type operator approach outlined in \cite{mao2023} and \cite{isett2025integral} to the problem of initial data gluing for asymptotically hyperbolic initial data $(g_{\mathrm{in}},k_{\mathrm{in}})$ and $(g_{\mathrm{out}},k_{\mathrm{out}})$ separated by a thin annular region
\[A_{2/\delta} = \{\log(2/\delta) \leq r \leq \log(8/\delta)\},\]
which is the hyperboloidal analogue of \cite{chrusciel2007gluing}.
Our work demonstrates that this approach provides more than an alternative proof of initial data gluing; it yields a stronger analytic framework than the existing literature.
Quantitatively, we provide sharper smallness assumptions on initial data by two orders of magnitude, obtaining a solution operator bound of $O(\delta^{-2})$ for the Hamiltonian constraint and $O(\delta^{-4})$ for the Momentum constraint. 
Consequently, we require a bound of $\delta^6 \ep$ for the Hamiltonian constraint equation and $\delta^{7}\ep$ for the Momentum constraint equation, where $\ep = \ep_c$.
This appears to be essentially optimal without additional regularity assumptions on the nonlinear corrections.
Moreover, the approach works in highly anisotropic settings, constructing solution operators at two different scales. 
For a more detailed statement of the main result, see Section~\ref{sec:firstgluing}.
The ability for this approach to handle anisotropic directions suggests that other types of gluing, such as Czimek-Rodnianski-type gluing of \cite{CzimekRodnianski2022} and localized bump constructions, are permissible with related suitable smallness assumptions.

\subsection{Acknowledgements}
I thank my advisor, Sung-Jin Oh, for introducing me to this problem and taking on a consultant role throughout this project.
His guidance, ideas, contributions, and words of encouragement for this project have been essential.
I would also like to thank the authors of \cite{CRMECA_2025__353_G1_29_0} and \cite{chrusciel2007gluing} for providing an excellent framework and a launching point of this project, and to Ning Tang and Zhongkai Tao for helpful discussions.
This research was supported NSF through DMS-1945615, DMS2452760, the Miller Research Professorship, and through NSF GRFP grant DGE-2146752.
\vspace{.2cm}

\textbf{Conflict of Interest}\;\;
The authors declares that they have no conflict of interest.
\vspace{.2cm}

\textbf{Data Availability and AI Statement}\;\;
No datasets were created or analyzed during the present work, so data sharing is not relevant to this article.
All ideas presented are either my own or from helpful discussions with my advisor. All of the writing, the figure, and the arguments in this paper are my own. Claude Opus $5.5$ was used at the very end of this project exclusively for proofreading in three places: the frequency based estimate Lemma~\ref{lem:fio_estimate}, the $\delta$-power numerology in Steps 4 and 5, and remark on comparison with \cite{chrusciel2007gluing}.

\section{Preliminaries}\label{sec:Preliminaries}
In this section, we make several definitions, some of which are analogous to those in \cite{chrusciel2007gluing}, and recall some important propositions of \cite{isett2025integral}.

Throughout our many different choices of coordinates, our hyperbolic base space will be the $d$-dimensional manifold $M = (r_0, \infty) \times N$, for some compact manifold $N$.
For our purposes $N$ can be taken to be the sphere $\Sb^{d-1}$, and in practice our sets will be generalized annuli with this setting in mind.
This manifold represents the spatial directions, and time will be considered separately.

\subsection{Hyperbolic Metrics}
\begin{definition}[\cite{chrusciel2007gluing}]
The \textbf{Kottler Metric} $b$ is defined as 
\begin{equation}
b \coloneqq \frac{dr^2}{r^2+\kappa} + r^2 \bh
\end{equation}
where $\kappa \in \{0, \pm 1\}$ and $\bh$ is a metric on $N$ that satisfies $\Ric(\bh) = \kappa(d-2)\bh$.
\end{definition}
Note that in the hyperbolic setting, the sectional curvature $\kappa$ is $-1$, and $\bh$ is the round metric on the unit sphere $\Sb^{d-1} = N$. 

For our purposes, it will be convenient to rewrite this metric in coordinates reflecting the compactification $\overline{M}$ for a suitable boundary-defining function $\rho$ which vanishes on $\partial M$ with $d\rho$ nonvanishing there.

In this coordinate system, our metric reads 
\begin{equation}
b \coloneqq \rho^{-2} \left( d\rho^2 + \frac{(4-k\rho^2)^2}{16}\bh \right)
\end{equation}
We can then define the conformal compactification 
\[\bl \coloneqq \rho^2 b.\]

\begin{definition}[\cite{chrusciel2007gluing}]
The metric $b$ is part of a larger class of metrics called \textbf{generalized Kottler metrics} defined as
\begin{equation}
b_m = \rho^{-2}\left( \left[ 1 - 2m\rho^n \left(1-k \left(\frac{\rho}{2} \right)^2 \right)^{2-n} \left(1+k \left(\frac{\rho}{2} \right)^2 \right)^{-2} \right]^{-1} d\rho^2 + \left(1 - k \left(\frac{\rho}{2}\right)^2 \right)^2 \bh \right)
\end{equation}

\end{definition}

We will assume that we are in the asymptotically ``time-symmetric case," namely that the second fundamental form $k$ tends to $0$ at infinity. 

Note that we will work with the \textbf{standard hyperbolic metric}, which on the Poincar\'e ball reads
\begin{equation}
b = \omega^{-2} \delta
\end{equation}
where $\delta$ is the Euclidean metric on $\R^d$ and the conformal factor $\omega(x) = \frac{1}{2}(1-|x|^2_{\delta})$.
\begin{remark}
As remarked in \cite{chrusciel2007gluing}, for generalized Kottler metrics $b_m$ we can only hope for a one-dimensional cokernel of the linearized scalar curvature with parameter given by the mass $m$.
In the standard hyperbolic metric setting, the complete $SO(d,1)$ symmetry gives us a full cokernel of dimension $(d+1)(d+2)/2$. 
This is related to the notion of complete integrability, discussed in \cite{isett2025integral};
in particular, the lack of a full dimensional cokernel means one could run precisely the same gluing argument as we will outline in Section~\ref{sec:firstgluing}, but the cost is that the solution operator bounds one obtains will be ineffective.
This means that by repeating the proof of Theorem~\ref{thm:gluing_linear} for $b_m$ but using Theorem 5.21 in \cite{isett2025integral} instead, one obtains a linear operator $\Rcal$ such that
\[P(S - \Rcal)f = f - w_A \langle Z^A, f\rangle\]
for $Z^A$ the cokernel element corresponding to the mass and $w_A \in C_c^{\infty}(A_{2/\delta})$ and an identical gluing result holds, with no quantitative information on the necessary smallness assumptions.
\end{remark}

Since we are in the hyperbolic setting, the Hamiltonian and Momentum constraint equations read 

\begin{equation}\label{eqn:constraints_einstein}
\begin{cases}
R(g) + (\tr_g k)^2 - |k|^2_g = 2\Lambda \\
\mathrm{div}(k-(\tr_g(k)g)) = 0
\end{cases}
\end{equation}
Here we take the cosmological constant $\Lambda = \frac{-d(d-1)}{2}$, so that if $k=0$ we have that $\Ric(g) = -(d-1)$ and $R(g) = -d(d-1)$.

From \cite{chrusciel2007gluing}, we read off the linearized scalar curvature and its adjoint as 
\begin{equation}\label{eqn:linearized_unrefined}
P_R h \coloneqq DR\vert_b [h] = -\Delta_b(\tr_b h) + \dv_b(\dv_b h) + (d-1)\tr_b h
\end{equation}
and
\begin{equation}
P_R^* f \coloneqq DR^*\vert_b [f] = -\Delta_b fb + \nabla_b \nabla_b f + (d-1) f b
\end{equation}

It will be important to introduce Schematic notation to reduce clutter and not have to consider terms explicitly:
\begin{definition}[Definition 2.1 in \cite{mao2023}]\label{defn:schematic}
We let $M_{\sigma}^{(\ast)}(u_1, \dots, u_k)$ and $N_{\sigma}^{(\ast)}(u_1, \dots, u_k)$ denote error terms given by linear combinations of contractions of $\{u_i\}$ with a smooth tensor field $A({\sigma})$ depending only on ${\sigma}$ to produce a scalar.
Here, the superscript refers to an index used to identify the terms.
Moreover for some fixed $M > 0$, we assume that if $|{\sigma}| \leq M$, then for all $n \in \N$ we have $|\partial^{(n)}_{\sigma} A({\sigma})| \lesssim_{M, n} 1$.
\end{definition}

Using Definition \ref{defn:schematic} and the substitions
\[\sigma_{ij} = g_{ij} - b_{ij} - b_{ij}\tr_b(g-b); \qquad \pi_{ij} = k_{ij} - b_{ij} \tr_b k,\] 
one can show that the equations reduce to
\begin{equation}\label{eqn:constraints}
\begin{aligned}
\begin{cases}
\nabla_i^{(b)} \nabla_j^{(b)} \sigma^{ij} - \tr_b \sigma = M^{(0)}_{\sigma}(\pi,\pi) + M^{(1)}_{\sigma}(\nabla^{(b)} \sigma, \nabla^{(b)} \sigma) + M^{(2)}_{\sigma}(\sigma, (\nabla^{(b)})^2 \sigma) \\
\nabla_i^{(b)} \pi^{ij} = N^{(1)j}_{\sigma}(\sigma, \nabla^{(b)} \pi) + N^{(0)j}_{\sigma}(\nabla^{(b)} \sigma, \pi)
\end{cases}
\end{aligned}
\end{equation}
where the superscript refers to the ambient metric; we will drop this for convenience.
This computation is carried out explicitly in Appendix~\ref{sec:derive_sigma_pi}.

\subsection{Hyperbolic Annuli}

Throughout the next section, it will be more convenient to consider the metric on the hyperboloid
\[b = dr^2 + \sinh^2(r)g_{\Sb^{d-1}}, \]
where $g_{\Sb^{d-1}}$ is the round metric on the $d-1$-dimensional unit sphere.
For the Poincar\'e Ball, the annular region we will be interested in will be the analog of the unit annulus in \cite{mao2023}, namely $\{\sqrt{1-8\delta} \leq \rho \leq \sqrt{1-2\delta}\}$.
For the coordinates on the hyperboloid, this transforms to 
\[A_{2/\delta} \coloneqq \left\{\log\left(\frac{2}{\delta}\right) \leq r \leq \log \left(\frac{8}{\delta} \right) \right\}.\]

It is also useful to have a thickening of this annulus to define cutoff functions which are identically $1$ on the interior annulus; we will denote this thickened annulus by
\[\widetilde{A}_{2/\delta} = \left\{ \log\left(\frac{1}{\delta}\right) \leq r \leq \log \left(\frac{16}{\delta} \right) \right\}.\]
Here, the scaling is of the order $\log(4)$ and $\log(16)$, which are roughly on the same order as $(1,2)$ and $(1/2, 4)$.
This is fixed, and independent of choice of $\delta$.

In addition to the thin annulus described above, we will consider a euclidean approximation and its rescaling:

\begin{definition}\label{defn:eucl_ann}
\begin{enumerate}
    \item The \textbf{euclidean annular approximation} $A$ of $A_{2/\delta}$ is defined as 
    \[\left\{x^1, \dots, x^d: \frac{1}{\delta} \leq ||x||_2 = \sqrt{x^ix_i} \leq \frac{1}{\delta} + 1 \right\}.\]
    \item The \textbf{unit cylinder rescaling} of $A$, denoted by $C$, is given by $z \in \Sb^{d-1} \times [0,1]$ equipped with the coordinate transformation
    \begin{equation}\label{eqn:stereo_proj}
    \begin{aligned}
    \Phi &\colon A_- \to C \\
    \Phi(x',x_d) &= \left(\frac{x'}{||x||_2 - x_d}, ||x||_2 - \frac{1}{\delta} \right) \coloneqq (z',z_d).
    \end{aligned}
    \end{equation}
\end{enumerate}
Here, $A_-$ denotes the bottom hemisphere, $x',z'$ refer to the angular components, while $x_d,z_d$ refer to the radial component.

Likewise, we will denote by $\widetilde{A}$ and $\widetilde{C}$ the $1$-thickenings of $A$ and $C$, respectively.
\end{definition}

\begin{remark}
Since $\sinh(\log(1/\delta)) \approx 1/\delta$, it is reasonable to assume that the thin euclidean annulus is a valid approximation of the hyperbolic annulus.
However, as we shall see, this thin annulus does not completely capture the full geometry of the setting.
Nevertheless, it will be convenient to consider the Euclidean approximation, as well as the unit scale regime which is obtained via stereographic projection of a hemisphere onto a plane tangent to the bottom of the sphere, followed by an angular rescaling. 
\end{remark}

\subsection{Sobolev Norms and Anisotropy}
Due to the complicated nature of the analysis in the hyperbolic setting and the strength of the conditions that would be need to placed on $(\sigma, \pi)$ for our results, it is more convenient to estimate quantities in the spaces $A$ and $C$.
In particular, we work with a standard, or isotropic, definition of $H^s$ on the unit cylinder $C$, and transport this to an anisotropic $\wH^s$ defined on $A$.
This is reflective of the actual geometry and standard Sobolev space on $A_{2/\delta}$, where each additional large-scale derivative costs a factor of $1/\delta$.

\begin{definition}[Anisotropic Sobolev Norm]\label{def:sobolev}
Let $u \colon A \to \R$ be a function. 
We define the \textbf{Anisotropic Sobolev Norm} $\wH^k$ on $A$ as
\begin{equation}
    ||u||_{\wH^k}^2 = \delta^{(d-1)} \sum_{\substack{\alpha' \in \N^{d-1}, \alpha_d \in \N \\ |\alpha'| + \alpha_d \leq k}} ||\delta^{-|\alpha'|} \pdv_{x'}^{\alpha'} \pdv_{x_d}^{\alpha_d} u ||^2_{L^2(A)}. 
\end{equation}
Similarly, for $T^{a_1, \dots, a_k}{}_{b_1, \dots, b_l}$ a $(k,l)$-tensor, we define the anisotropic Sobolev norm of $T$ as 
\begin{equation}\label{eqn:tensor_norm}
||T||_{\wH^k} = \sum_{(a_i,b_j)=(x_m',x_d),i,j,m} ||\delta^{(\#a_i = x_m') - (\#b_j = x_m')} T^{a_1, \dots, a_k}{}_{b_1, \dots, b_l}||_{\wH^k} 
\end{equation}
where $i \in \{1, \dots, k\}, j \in \{1, \dots, l\}$, and  $m \in \{1,\dots, d-1\}$.
In other words, the factor of $\delta$ for tensors is precisely the number of upper indices of the tensor which are angular components minus the number of lower indices which are angular components.
\end{definition}
 
This definition is completely consistent with the definition of $\wH^k$, as taking an angular derivative costs a factor of $\frac{1}{\delta}$, which removes one angular upper index of the tensor. 
As an example,
\[||\nabla_a \sigma^{ax_j'}||_{\wH^k} = \left|\left| \frac{1}{\delta} \pdv_{x_i'} \delta^2 \sigma^{x_i'x_j'} \right|\right|_{\wH^k} + ||\pdv_{x_d}\delta \sigma^{x_d x_j'}||_{\wH^k}, \]
and indeed both of the terms have a factor of $\delta$ corresponding to one raised angular index.

With this norm, a straightforward computation demonstrates the following:

\begin{proposition}\label{prop:equiv_of_norms}
Let $T$ be a $(k,l)$-tensor as above. 
Then we have 
\[||T||_{\wH^k(A)} = ||T||_{H^k(C)}.\]
\end{proposition}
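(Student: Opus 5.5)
The plan is to read the identity $\|T\|_{\wH^k(A)} = \|T\|_{H^k(C)}$ as an exact change-of-variables statement. Here $\Phi$ from Definition~\ref{defn:eucl_ann} acts, in the coordinates $(x',x_d)$ used to define $\wH^k$, as the anisotropic dilation
\[
z' = \delta x', \qquad z_d = x_d - \tfrac{1}{\delta}.
\]
So I would first fix this normalization. On $A_-$ one has $\|x\|_2 - x_d = 2/\delta + O(1)$, so the angular map $x' \mapsto x'/(\|x\|_2 - x_d)$ is, up to the constant factor $2$, a pure rescaling by $\delta$. I would take the coordinates $x'$ on $A$ to be the ones in which this rescaling is exactly multiplication by $\delta$; the factor $2$ is absorbed by relabeling. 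This is the convention under which Definition~\ref{def:sobolev} is written: an angular derivative costs exactly $\delta^{-1}$, and the volume prefactor is exactly $\delta^{d-1}$. The risk in this step is that the identity is exact only in these coordinates. If one insists on the literal stereographic $\Phi$, one only gets equality up to lower-order terms in $\delta$, and I would note this explicitly.

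With this normalization, the scalar case is a direct computation. The chain rule gives $\pdv_{x'}^{\alpha'}\pdv_{x_d}^{\alpha_d} u = \delta^{|\alpha'|}\,(\pdv_{z'}^{\alpha'}\pdv_{z_d}^{\alpha_d} \tilde u)\circ\Phi$, where $\tilde u = u\circ\Phi^{-1}$. Hence $\delta^{-|\alpha'|}\pdv_{x'}^{\alpha'}\pdv_{x_d}^{\alpha_d}u$ is exactly the isotropic derivative $\pdv_z^{\alpha}\tilde u$ pulled back to $A$. The Jacobian satisfies $dz = \delta^{d-1}dx$. Therefore, for each multi-index,
\[
\delta^{d-1}\|\delta^{-|\alpha'|}\pdv^{\alpha}_{x}u\|_{L^2(A)}^2 = \|\pdv_z^\alpha \tilde u\|_{L^2(C)}^2.
\]
Summing over $|\alpha'|+\alpha_d\le k$ gives $\|u\|_{\wH^k(A)}=\|\tilde u\|_{H^k(C)}$.

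For a $(k,l)$-tensor, I would compute the components of $\Phi_*T$ in the $z$-frame. The relations are $\pdv_{z'_m} = \delta^{-1}\pdv_{x'_m}$, $dz'_m = \delta\, dx'_m$, and $\pdv_{z_d}=\pdv_{x_d}$. So each upper angular index of a component picks up a factor $\delta$, and each lower angular index picks up $\delta^{-1}$, while radial indices are unchanged. This yields
\[
(\Phi_*T)^{a_1\cdots a_k}{}_{b_1\cdots b_l} = \delta^{(\#a_i = x_m') - (\#b_j = x_m')}\,T^{a_1\cdots a_k}{}_{b_1\cdots b_l}\circ\Phi^{-1},
\]
which is precisely the weight in \eqref{eqn:tensor_norm}. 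Applying the scalar identity componentwise and summing over index assignments, with the $H^k(C)$ norm of a tensor taken as the sum of component norms in the coordinate frame $z$, gives the claimed equality. The only step needing care is matching the sign convention of the weight (upper versus lower angular indices) with the pushforward. The example computed after Definition~\ref{def:sobolev} confirms that an upper angular index carries $\delta^{+1}$, consistent with $dz'=\delta\,dx'$.
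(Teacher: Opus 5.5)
Your proposal is correct and is essentially the argument the paper has in mind. The paper gives no written proof beyond calling this a straightforward computation, and later it uses exactly your relations $dz'=\delta\,dx'$, $\pdv_{z'}=\delta^{-1}\pdv_{x'}$ and $\sigma^{z'z'}=\delta^{2}\sigma^{x'x'}$: the anisotropic dilation change of variables with Jacobian $\delta^{d-1}$ and a factor $\delta^{\pm1}$ per upper/lower angular index, which is exact only under your normalization of the angular coordinates. One small correction to your side remark: on the whole lower hemisphere $\|x\|_2-x_d$ ranges between about $1/\delta$ and $2/\delta$, so for the literal stereographic $\Phi$ the discrepancy is the $O(1)$ conformal factor $2/(1+|z'|^2)$, which gives equivalence of norms with $\delta$-independent constants rather than equality up to lower-order terms in $\delta$.
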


Let us observe first that the factor of $\delta^{d-1}$ in front of the $\wH^k$ norm is due to the volume factor. 
Hence, we are treating $C$ as truly euclidean, so the $L^2$ norm is standard as well on $C$.
This will be extremely crucial when transferring bounds on the solution operator $S$.

In addition, for $s \in \R$, one can define the anisotropic Sobolev and Besov norms $\wH^s$ and $\wB^s_{p,r}$ as 

\begin{equation}\label{eqn:aniso_sobolev_s}
||u||_{\wH^s(A)}^2 = \delta^{d-1} \int (1+ \delta^{-2}|\xi'|^2 + |\xi_d|^2)^s |\hat{u}(\xi',\xi_d)|\dd\xi' \dd\xi_d
\end{equation}
and 
\begin{equation}\label{eqn:aniso_besov_s}
||u||_{\wB^s_{p,r}(A)}^2 = ||P_{0,0} u||_{L^p}^r + \sum_{\substack{j,k \geq 0 \\ (j,k) \neq (0,0)}} (1+2^{2j}+2^{2k})^{sr/2} ||P_{j,k}||^r_{L^p},
\end{equation}
where
\[\widehat{P_{j,k} u} = \phi\left(\frac{|\xi'|}{2^j \delta}\right)\phi\left(\frac{|\xi_d|}{2^j }\right). \]
Such spaces will be useful to state our results in full generality, and to run the necessary estimates.
One can check that Proposition \ref{prop:equiv_of_norms} extends to $\wH^s$ and $\wB^s_{p,r}$ as well.

\subsection{Charges and Conserved Quantities}
In order to have the flexibility to perform an iteration argument, we must first discuss the charges in the hyperbolic setting.

The generators of the cokernel of the constraint equations are given by the \emph{Poincar\'e} group, where we observe that we have a full cokernel in the standard hyperbolic case.
Explicitly, the kernel of the adjoint Hamiltonian constraint operator $P^*_R f = \nabla_i \nabla_j f - b_{ij}f$ is given by the restriction to $\Hb^d$ of the ambient coordinates $\{y^0, \dots, y^d\}$, while the kernel of the adjoint Momentum constraint operator $P^*_M Y_{\alpha}$ are given by:
\begin{itemize}
    \item \emph{boost generators} $y^0 \delta_{ji} + y^i \delta_{j0}$, and 
    \item \emph{rotation generators} $y^i\delta_{jk} - y^j \delta_{ik}$.
\end{itemize}
Here, indices run from $0$ to $d$, while $i,j,k \in \{1, \dots, d\}$, and the expressions are the components of the corresponding Killing vectors of Minkowski spacetime once restricted to the hyperboloid.
One verifies that there are exactly $(d+1)(d+2)/2$ such elements across both constraint equations.

A routine exercise in integration by parts and properties of the connection allows us to obtain the following lemma:

\begin{lemma}\label{lem:charges}
Fix $0 < r_0 < r_1$.
Then we have
\begin{equation}\label{eqn:hamiltoncharge}
\begin{aligned}
\int_{B_{r_1} \setminus B_{r_0}} \left( \nabla_i \nabla_j \sigma^{ij} \begin{pmatrix}
    y^0 \\
    \vdots \\
    y^d \\
\end{pmatrix} - b_{ij} \sigma^{ij} \begin{pmatrix}
    y^0 \\
    \vdots \\
    y^d \\
\end{pmatrix} \right) \sqrt{b} \dd y &= \int_{\pdv B_{r_1}} \left( \begin{pmatrix}
    y^0 \\
    \vdots \\
    y^d \\
\end{pmatrix} \nabla_j \sigma^{ij} - \sigma^{ij} \nabla_j \begin{pmatrix}
    y^0 \\
    \vdots \\
    y^d \\
\end{pmatrix} \right) \cdot \nu_i \dd S \\
&- \int_{\pdv B_{r_0}} \left( \begin{pmatrix}
    y^0 \\
    \vdots \\
    y^d \\
\end{pmatrix} \nabla_j \sigma^{ij} - \sigma^{ij} \nabla_j \begin{pmatrix}
    y_0 \\
    \vdots \\
    y^d \\
\end{pmatrix} \right) \cdot \nu_i \dd S, 
\end{aligned}
\end{equation}
for the Hamiltonian constraint, and 

\begin{equation}\label{eqn:momentumcharge}
\begin{aligned}
\int_{B_{r_1} \setminus B_{r_0}} \left( \nabla_i \pi^{ij} \begin{pmatrix}
    y^0 \delta_{jl} + y^l \delta_{j0} \\
    y^i \delta_{jk} - y^k \delta_{ji}
\end{pmatrix} \right) \sqrt{b} \dd y &= \int_{\pdv B_{r_1}} \left( \pi^{ij} \begin{pmatrix}
    y^0 \delta_{jl} + y^l \delta_{j0} \\
    y^i \delta_{jk} - y^k \delta_{ji}
\end{pmatrix} \right) \cdot \nu_i \dd S \\
&- \int_{\pdv B_{r_0}} \left(  \pi^{ij} \begin{pmatrix}
    y^0 \delta_{j}^l + y^l \delta_{j0} \\
    y^k \delta_{jl} - y^l \delta_{jk}
\end{pmatrix} \right) \cdot \nu_i \dd S, 
\end{aligned}
\end{equation}
in the Momentum constraint.

\end{lemma}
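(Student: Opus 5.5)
The plan is to prove both identities by integrating by parts twice (for the Hamiltonian part) or once (for the momentum part) on the region $\Omega = B_{r_1}\setminus B_{r_0}$ with respect to the hyperbolic metric $b$. At each stage we use the divergence theorem
\[
\int_\Omega \nabla_i X^i \sqrt{b}\,\dd y = \int_{\pdv B_{r_1}} X^i\nu_i\,\dd S - \int_{\pdv B_{r_0}} X^i \nu_i \,\dd S,
\]
where $\nu$ is the outward unit normal to the spheres $\{r=\text{const}\}$. The sign on the inner sphere reflects that the outward normal of $\Omega$ there is $-\nu$. The cokernel property of the functions $y^\mu$ and of the Killing fields will be used to kill the remaining bulk terms.

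For \eqref{eqn:hamiltoncharge}, fix $f = y^\mu$, restricted to $\Hb^d$. We write
\[
f\nabla_i\nabla_j\sigma^{ij} = \nabla_i\big(f\nabla_j\sigma^{ij} - \sigma^{ij}\nabla_j f\big) + \sigma^{ij}\nabla_i\nabla_j f,
\]
using the symmetry of $\sigma$ so that $\nabla_i f\,\nabla_j\sigma^{ij} = \nabla_i(\sigma^{ij}\nabla_j f) - \sigma^{ij}\nabla_i\nabla_j f$. Subtracting $f\, b_{ij}\sigma^{ij}$, the bulk remainder becomes
\[
\sigma^{ij}\big(\nabla_i\nabla_j f - b_{ij} f\big) = \langle \sigma, P_R^* f\rangle .
\]
This vanishes because $y^\mu|_{\Hb^d}$ lies in the kernel of $P_R^*$. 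That fact is the standard identity $\Hess_b y^\mu = y^\mu b$ for the linear coordinate functions on the hyperboloid $\{-y_0^2+|y|^2=-1\}$: the Hessian of a linear function of Minkowski space restricted to a spacelike hypersurface equals $-\langle \pdv y^\mu, n\rangle$ times the second fundamental form, and the second fundamental form of the hyperboloid is $b$ itself with normal $n=y$. Applying the divergence theorem to the vector field $X^i = f\nabla_j\sigma^{ij} - \sigma^{ij}\nabla_j f$ then gives exactly the stated boundary terms.

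For \eqref{eqn:momentumcharge}, let $Y$ be one of the boost or rotation Killing fields, with components $Y_j$ as listed. We write $Y_j\nabla_i\pi^{ij} = \nabla_i(\pi^{ij}Y_j) - \pi^{ij}\nabla_i Y_j$. Since $\pi$ is symmetric, $\pi^{ij}\nabla_iY_j = \tfrac12\pi^{ij}(\nabla_iY_j+\nabla_jY_i)$, and this vanishes by the Killing equation, i.e.\ $P_M^*Y=0$. The divergence theorem applied to $\pi^{ij}Y_j$ yields the boundary integrals.

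The only real content is checking that the listed objects lie in the kernels of $P_R^*$ and $P_M^*$. For the momentum part this reduces to verifying that the restrictions of the Minkowski Lorentz generators are tangent to $\Hb^d$ and Killing for the induced metric. This follows because Lorentz transformations preserve the hyperboloid and act on it by isometries. The remaining care is bookkeeping of signs and of the orientation of $\nu$ on the inner boundary; the rest is routine.
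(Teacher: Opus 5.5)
Your proposal is correct and follows the same route as the paper, which calls the lemma a routine integration by parts: two integrations by parts for the Hamiltonian identity and one for the momentum identity. In both cases the bulk terms $\langle \sigma, P_R^* y^\mu\rangle$ and $\langle \pi, P_M^* Y\rangle$ vanish because $\Hess_b y^\mu = y^\mu b$ (via the hyperboloid's second fundamental form) and the Lorentz generators restrict to Killing fields, which is exactly how the paper verifies these kernel elements in the proof of Lemma~\ref{lem:cok_on_AC}.
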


From this lemma, it is convenient to define the following charges, analogously to the flat case:
\begin{equation}\label{eqn:energy_charge}
\Ef[(\sigma, \pi); \pdv B_r] = \int_{\pdv B_{r}} \left( y^0 \nabla_j \sigma^{ij} - \sigma^{ij} \nabla_j y^0 \right) \cdot \nu_i \dd S 
\end{equation}

\begin{equation}\label{eqn:com_charge}
\Cf^j[(\sigma, \pi); \pdv B_r] = \int_{\pdv B_{r}} \left( y^j \nabla_j \sigma^{ij} - \sigma^{ij} \nabla_j y^j \right) \cdot \nu_i \dd S   
\end{equation}

\begin{equation}\label{eqn:mom_charge}
\Pf^l[(\sigma, \pi); \pdv B_r] = \int_{\pdv B_{r_1}} \left( \pi^{ij} \left(
    y^0 \delta_{j}^l + y^l \delta_{j0} \\
 \right) \right) \cdot \nu_i \dd S    
\end{equation}

\begin{equation}\label{eqn:angmom_charge}
\Jf^{kl}[(\sigma, \pi); \pdv B_r] = \int_{\pdv B_{r_1}} \left( \pi^{ij} \left(
    y^k \delta_{j}^l - y^l \delta_{j}^k \\
 \right) \right) \cdot \nu_i \dd S       
\end{equation}

All of these together will be a vector in $\R^{(d+1)(d+2)/2}$ given by $\Qf[(\sigma,\pi); \pdv B_r]$.

In addition, we want to define averaged charges as in \cite{mao2023}, but we need to be careful in this construction.
In the flat case, the natural scaling is simply dilation, but in the hyperbolic regime the natural scaling is translational symmetries. 

To this end, we define $\eta \in C_c^{\infty}((0,\infty))$ such that $\supp(\eta) \in (0, \log(4))$, $\int \eta(r')\dd r' = 1$ and $\eta_r(r') = \eta(r'+r)$.
Then we see that for $r =  \log(2/\delta)$, integrating against this smooth function integrates over the entire annulus $A_{2/\delta}$.
We can then define the averaged charge

\begin{equation}\label{eqn:avg_charge}
\Qf[(\sigma,\pi); A_{r}] = \int \eta_r(r') \Qf[(\sigma,\pi); \pdv B_{r'}]\dd r'.
\end{equation}

This allows us to state the following averaged version of Lemma \ref{lem:charges}:

\begin{lemma}\label{lem:avg_charges}
Fix $0<r_0 < r_1$, let $\eta_r$ be as above. Then for $\chi_{r_0,r_1}(r) = \int_{-\infty}^r (\eta_{r_0}(r') - \eta_{r_1}(r'))\dd r'$, we have the following:
\begin{equation}\label{eqn:avghamcharge}
\begin{aligned}
\int \chi_{r_0,r_1}(r) \left( \nabla_i \nabla_j \sigma^{ij} \begin{pmatrix}
    y^0 \\
    \vdots \\
    y^d \\
\end{pmatrix} - b_{ij} \sigma^{ij} \begin{pmatrix}
    y^0 \\
    \vdots \\
    y^d \\
\end{pmatrix} \right) \sqrt{b} \dd x &= 
\begin{pmatrix}
\Ef \\
\Cf^1 \\
\vdots \\
\Cf^d \\
\end{pmatrix}
[(\sigma,\pi); A_{r_1}] -
\begin{pmatrix}
\Ef \\
\Cf^1 \\
\vdots \\
\Cf^d \\
\end{pmatrix}
[(\sigma,\pi); A_{r_0}],
\end{aligned}
\end{equation}
for the Hamiltonian constraint, and 
\begin{equation}\label{eqn:avgmomentumcharge}
\begin{aligned}
\int \chi_{r_0,r_1}(r) \left( \nabla_i \pi^{ij} \begin{pmatrix}
    y^0 \delta_{jl} + y^l \delta_{j0} \\
    y^i \delta_{jk} - y^k \delta_{ji}
\end{pmatrix} \right) \sqrt{b} \dd x &=  
\begin{pmatrix}
\Pf^1 \\
\vdots \\
\Pf^d \\
\vdots \\
\Jf^{kl} \\
\vdots
\end{pmatrix}
[(\sigma, \pi); A_{r_1}] - 
\begin{pmatrix}
\Pf^1 \\
\vdots \\
\Pf^d \\
\vdots \\
\Jf^{kl} \\
\vdots
\end{pmatrix}
[(\sigma, \pi); A_{r_0}].
\end{aligned}
\end{equation}
for the Momentum constraint.
\end{lemma}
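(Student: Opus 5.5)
The plan is to derive Lemma~\ref{lem:avg_charges} from Lemma~\ref{lem:charges} by averaging in the radial variable. Write $F(\sigma,\pi)$ for the integrand on the left-hand side, i.e. the constraint expression contracted with a cokernel element $Y\in\{y^0,\dots,y^d\}$ or a boost/rotation generator, times $\sqrt{b}$. Lemma~\ref{lem:charges} applies with any radii $s_0<s_1$. It gives
\[
\int_{B_{s_1}\setminus B_{s_0}} F \;=\; \Qf[(\sigma,\pi);\pdv B_{s_1}]-\Qf[(\sigma,\pi);\pdv B_{s_0}],
\]
with $\Qf$ the relevant component of the charge vector. Equivalently, for a fixed base radius $\underline r$ below the supports of $\eta_{r_0}$ and $\eta_{r_1}$, the function $q(s):=\Qf[(\sigma,\pi);\pdv B_s]$ satisfies
\[
q(s)=q(\underline r)+\int_{\{\underline r\le r\le s\}} F .
\]
Here I use geodesic polar coordinates $(r,\theta)$ on $\Hb^d$, with $B_s=\{r<s\}$. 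In these coordinates the volume integral over a shell is $\int_{\underline r}^{s}\big(\int_{\Sb^{d-1}} F\sqrt{b}\,d\theta\big)\dd r$.

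First, multiply this identity by $\eta_{r_1}(s)$ and integrate in $s$. Since $\int\eta_{r_1}=1$, the left side becomes $\Qf[(\sigma,\pi);A_{r_1}]$ by \eqref{eqn:avg_charge}. The right side becomes
\[
q(\underline r)+\int_{\underline r}^\infty\Big(\int_{r}^\infty\eta_{r_1}(s)\,ds\Big)\Big(\int_{\Sb^{d-1}}F\sqrt b\,d\theta\Big)\dd r
\]
after exchanging the order of integration (Fubini). Doing the same with $\eta_{r_0}$ and subtracting, the boundary term $q(\underline r)$ cancels. The remaining radial weight is
\[
\int_r^\infty(\eta_{r_1}-\eta_{r_0})\,ds=\int_{-\infty}^r(\eta_{r_0}-\eta_{r_1})\,ds=\chi_{r_0,r_1}(r),
\]
where the equality uses that both bumps have unit mass. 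This is exactly \eqref{eqn:avghamcharge} and \eqref{eqn:avgmomentumcharge}. Note that $\chi_{r_0,r_1}$ is compactly supported, so no contribution from $r\le\underline r$ or from infinity appears and $\underline r$ drops out entirely.

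The step needing care is the convention of the translated bump. As written, $\eta_r(r')=\eta(r'+r)$ is supported in $(-r,\log 4-r)$. The intended meaning, consistent with the paper's remark that $\eta_r$ covers $A_{2/\delta}$ when $r=\log(2/\delta)$, is $\eta(r'-r)$. I will adopt that convention, ensure $\underline r>0$ lies below both supports (say $r_0>0$ suffices after shrinking $\underline r$), and treat the origin singularity of polar coordinates as irrelevant since only shells away from $0$ occur.

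One could alternatively avoid Lemma~\ref{lem:charges} and integrate by parts directly against $\chi_{r_0,r_1}Y$. The derivative then falls on $\chi$, since $Y$ is annihilated by the adjoint operators $P_R^*$ and $P_M^*$. This produces $\int \chi'(r)\,\big(\text{flux density}\big)$ with $\chi'=\eta_{r_0}-\eta_{r_1}$, giving the same result. I would mention this as a consistency check, in particular for the signs: the outward normal is $\nu=\pdv_r$ and the sign convention of $\chi$ must match it. No genuine obstacle is expected beyond this bookkeeping of signs and conventions.
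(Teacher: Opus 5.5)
Your proposal is correct and follows the paper's intended argument. The paper gives no separate proof; it presents the lemma as the radial average of Lemma~\ref{lem:charges}. Your Fubini computation with the unit-mass bumps, producing the weight $\int_r^\infty(\eta_{r_1}-\eta_{r_0})\,ds=\chi_{r_0,r_1}(r)$, supplies exactly that step, and you are right that the translation must be read as $\eta_r(r')=\eta(r'-r)$ for the supports to land in $A_{2/\delta}$.
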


We make the observation that if we want a charge that is identically $1$ on $A_{2/\delta}$ and supported on $\widetilde{A}_{2/\delta}$, we can take $\chi_{\log(1/4\delta), \log(4/\delta)}(r)$.
Moreover, the precise analogous statements hold on $A$, on $C$, and their thickenings, where the thickenings are taken to be simply padding out by $1$ above and below. 
We will denote the corresponding averaging function in the unit picture by 
\begin{equation}\label{eqn:avg_C}
\chi^C_{-1,1} = \int_{-\infty}^r (\eta_{-1}^C(r') - \eta_{1}^C(r'))\dd r'.    
\end{equation}

Given that we will need to run a fixed point argument in the space of charges, it will make sense to consider a special collection of elements $Q \in \R^{(d+1)(d+2)/2}$.
Indeed, we will need to impose some conditions to run such an argument, hence we need to make the following definition as in \cite{mao2023}:

\begin{definition}[Definition 1.2 in \cite{mao2023}]\label{def:Q-admissible}
Consider a bounded open subset $\Qcal \subset \R^{(d+1)(d+2)/2}$ of charges and $s \in \R$ fixed. 
We say that a $(d+1)(d+2)/2$-parameter family $\{(\sigma_Q, \pi_Q)\}_{Q \in \Qcal}$ is a $\Qcal$-admissible family of annular initial data sets on $\widetilde{A}_r$ with Sobolev regularity $s$ if the following four conditions hold:

\begin{enumerate}
    \item For every $Q \in \Qcal$, $(\sigma_Q, \pi_Q) \in (H^1 \cap C^0) \times L^2(\widetilde{A}_r)$ and solves Equation~\eqref{eqn:constraints_einstein} in $\widetilde{A}_r$.
    \item For every $Q \in \Qcal$, $(\sigma_Q, \pi_Q) \in H^s \times H^{s-1} (\widetilde{A}_r)$.
    \item The map $\Qcal \to H^s \times H^{s-1}(\widetilde{A}_r)$ defined by $Q \mapsto (\sigma_Q, \pi_Q)$ is Lipschitz.
    \item For every $Q \in \Qcal$, one can recover $Q$ from $(\sigma_Q, \pi_Q)$ by integrating to obtain the average charge over the annulus $Q = \Qf[(\sigma_Q, \pi_Q); A_r]$.
\end{enumerate}
\end{definition}

\subsection{Graded Augmented Systems and Recovery on Curves}\label{subsec:grded_sys}
We end the preliminaries by explaining an important tool that will allow us to verify the recovery on curves condition of \cite{isett2025integral} and obtain a bound on the solution operator $S$, which is referred to as a \emph{graded augmented system.}

\begin{definition}[Definition 1.4 in \cite{isett2025integral}, Graded augmented system] \label{def:aug}
Let $P$ be an $r_{0} \times s_{0}$-matrix-valued differential operator on an open subset $U \subseteq \R^{d}$, with $m_{K}$ denoting the order of $(P^{\ast} \varphi)_{K}$ for each $K \in \{1, \ldots, s_{0}\}$. Given $m_{K}' \in \Z_{\geq 0}$ for each $K \in \{1, \ldots, s_{0}\}$ and an $\C^{r_{0}}$-valued function $(\varphi_{J})_{J \in \{1, \ldots, r_{0}}\}$ on $U$, consider $(\Phi_{\Af} = \Phi_{\Af}(y))_{\Af \in \Acal}$ (called \textbf{augmented variables}) satisfying the following properties:
\begin{enumerate}[label=($\Phi$-\arabic*), series=augcondition]
\item \label{hyp:aug1} {$(\Phi_{\Af})$ is an augmentation of $(\varphi_{J})$.} The index set $\Acal$ contains $\{1, \ldots, r_{0}\}$; moreover, $\Phi_{J} = \varphi_{J}$ for $J \in \{1, \ldots, r_{0}\} \subseteq \Acal$.
\item \label{hyp:aug2} {$(\varphi_{J}) \mapsto (\Phi_{\Af})$ is a differential operator.} There exist functions $c[\Phi_{\Af}]^{(\alpha, J)}$ on $U$, where $\alpha$ is a multi-index and $J \in \{1, \ldots, r_{0}\}$, such that
\begin{equation*}
\Phi_{\Af}(y) = c[\Phi_{\Af}]^{(\alpha, J)}(y) \pdv^{\alpha} \varphi_{J}(y),
\end{equation*}
for all $\Af \in \Acal$ and $y \in U$.
\item \label{hyp:aug3} {$\Phi$ and $P^{*} \varphi$ solve a first-order PDE (augmented PDE system).} There exist matrix-valued $1$-forms $\tensor{(\Bf_{i})}{_{\Af}^{\Af'}}$ and $\tensor{(\Cf_{i})}{_{\Af}^{(\gamma, K)}}$ on $U$, where $\gamma$ is a multi-index and $K \in \{1, \ldots, s_{0}\}$, such that
\begin{equation} \label{eq:aug-pde}
	\pdv_{i} \Phi_{\Af}(y) = \tensor{(\Bf_{i})}{_{\Af}^{\Af'}}(y) \Phi_{\Af'}(y) + \tensor{(\Cf_{i})}{_{\Af}^{(\gamma, K)}}(y) \pdv^{\gamma} (\Pcal^{\ast} \varphi)_{K}(y),
\end{equation}
for all $\Af \in \Acal$, $i \in \{1, \ldots, d\}$ and $y \in U$. Furthermore,
\begin{equation*}
\tensor{(\Cf_{i})}{_{\Af}^{(\gamma, K)}} = 0 \quad \hbox{ if } {|\gamma|} > m_{K}'.
\end{equation*}
\item \label{hyp:aug4} {\textbf{Graded structure.}} Define the \textbf{degree} $d_{\Af}$ of $\Phi_{\Af}$ by
\begin{equation*}
d_{\Af} \coloneqq - \max \{{|\alpha|} : c[\Phi_{\Af}]^{(\alpha, J)} \neq 0 \hbox{ for some } J.\}
\end{equation*}
In particular, $\varphi_{J}$ has degree $0$, i.e., $d_{J} = 0$ for $J \in \{1, \ldots, r_{0}\}$. We assume that:
\begin{itemize}
\item {\text{Graded structure for the augmented system.}}
\begin{align*}
\tensor{(\Bf_{i})}{_{\Af}^{\Af'}} &= 0 \qquad \hbox{ if } d_{\Af} >  d_{\Af'} + 1, \\
\tensor{(\Cf_{i})}{_{\Af}^{(\gamma, K)}} &= 0 \qquad \hbox{ if } d_{\Af} > -m_{K} - {|\gamma|} +1.
\end{align*}
\end{itemize}
\end{enumerate}

We write $N_{0} \coloneqq \max\{|{d_{\Af}}|\}_{\Af \in \Acal} + 1$ for the maximal degree\footnote{Here, $+1$ accounts for the derivative $\pdv_{i}$ on the LHS of \eqref{eq:aug-pde}.} that occurs in \eqref{eq:aug-pde}.

We call a collection $(\Acal, (\Phi_{\Af})_{\Af \in \Acal}, \tensor{(\Bf_{i})}{_{\Af}^{\Af'}}, \tensor{(\Cf_{i})}{_{\Af}^{(\gamma, K)}})$ satisfying \ref{hyp:aug1}--\ref{hyp:aug4} a (\textbf{graded}) \textbf{augmented system} for $P$.

Moreover, such a collection is called \textbf{completely integrable} if $\dim \ker P^* = \# \Acal$.
\end{definition}

A crucial fact about these augmented systems is the following: 
\begin{proposition}[Proposition 1.6 in \cite{isett2025integral}]\label{prop:aug_sys_RC}
Suppose that $P$ has a graded augmented system $(\Phi_{\Af})_{\Af \in \Acal}$, and that $\Bf_i$ and $\Cf_i$ and their derivatives are uniformly bounded on $U$.
Then $P$ satisfies the recovery on curves condition for any admissible family of curves $x(y,y_1,s)$, in the sense that one can linearly recover the value of $\phi$ at $y$ in terms of its jet at $y_1$ and the jet of $P^* \phi$ along $x(y,y_1,\cdot)$.
More precisely, one can apply Theorem \ref{thm:sol_op_5.15}.
\end{proposition}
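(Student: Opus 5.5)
The plan is to reduce recovery on curves to a linear ODE along each curve, with the graded structure \ref{hyp:aug4} controlling how many derivatives of $P^*\varphi$ appear. Fix an admissible family of curves $s \mapsto x(y,y_1,s)$ with $x(y,y_1,0)=y_1$ and $x(y,y_1,1)=y$. Stack the augmented variables into a vector $\Phi=(\Phi_{\Af})_{\Af\in\Acal}$ and restrict \eqref{eq:aug-pde} to the curve. Contracting with $\dot x^i$ gives
\[
\frac{d}{ds}\Phi(x(s)) = \dot x^i(s)\,\Bf_i(x(s))\,\Phi(x(s)) + \dot x^i(s)\,\Cf_i(x(s))\,\pdv^{\gamma}(P^*\varphi)_K(x(s)),
\]
which is a linear first-order ODE system in $s$ with bounded coefficients, since $\Bf_i$, $\Cf_i$ and $\dot x$ are uniformly bounded.

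The ODE is solved by Duhamel's formula. Let $\Gamma(s,s')$ be the fundamental matrix of $\frac{d}{ds}-\dot x^i\Bf_i$. Then
\[
\Phi(y) = \Gamma(1,0)\,\Phi(y_1) + \int_0^1 \Gamma(1,s)\,\dot x^i\,\Cf_i\,\pdv^{\gamma}(P^*\varphi)_K(x(s))\,ds.
\]
By \ref{hyp:aug1}, $\varphi_J(y)=\Phi_J(y)$ is one of the components. By \ref{hyp:aug2}, $\Phi(y_1)$ is a linear combination of derivatives of $\varphi$ at $y_1$, hence a function of its jet there. The integral involves only the jet of $P^*\varphi$ of order at most $m_K'$ along the curve. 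This is exactly the linear recovery formula.

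The remaining work is to check the quantitative hypotheses of Theorem~\ref{thm:sol_op_5.15}, namely uniform bounds on the kernels and their derivatives in $(y,y_1,s)$. Grönwall's inequality bounds $\Gamma$ by $\exp(\int|\dot x||\Bf|)$. Derivatives in $y$ and $y_1$ are handled by differentiating the ODE, which uses the assumed bounds on derivatives of $\Bf_i$ and $\Cf_i$ together with smooth dependence of the admissible curves.

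I expect the main obstacle to be the graded structure. The theorem needs the homogeneity and order counts, so that after rescaling the kernel has the correct degree, consistent with \ref{hyp:aug4}: $\Bf$ raises degree by at most one, and $\Cf$ pairs the $|\gamma|$-th derivative of $(P^*\varphi)_K$ with degree at most $-m_K-|\gamma|+1$. To handle this, I would first conjugate $\Phi_{\Af}$ by $\lambda^{-d_{\Af}}$ at scale $\lambda=|y-y_1|$. Under this conjugation the rescaled $\Bf$ and $\Cf$ remain bounded, which gives the scaling bounds the theorem requires.
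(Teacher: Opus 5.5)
The paper gives no proof of its own here; it imports the result from \cite{isett2025integral}. The argument there follows the route you describe: restrict \eqref{eq:aug-pde} to the curve, write $\varphi_J(y)$ through the propagator, and control the propagator by a degree-weighted rescaling in an abstract ODE estimate. Your treatment of the jet at $y_1$ and of the order count $|\gamma|\le m_K'$ is fine. The gap is in the quantitative step, which you identified as the obstacle but only half resolved.

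The smoothness hypotheses of Theorem~\ref{thm:sol_op_5.15} include the bound
\[
|S_J^{(\gamma,K)}(y,y_1,s)|\le A_S\,|y_1-y|^{m_K+|\gamma|}\,s^{m_K+|\gamma|-1},
\]
plus weighted derivatives, in the convention $x(y,y_1,0)=y$. In your convention, where $y$ is reached at $s=1$, the required factor is $(1-s)^{m_K+|\gamma|-1}$. Conjugating by $\lambda^{-d_{\Af}}$ at the fixed scale $\lambda=|y-y_1|$ only makes the rescaled coefficients bounded uniformly in $s$. So it yields the factor $|y-y_1|^{m_K+|\gamma|}$ but no vanishing at the recovery endpoint, contrary to your claim that it gives the required scaling bounds.

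This vanishing is essential, not cosmetic. Take $\gamma=0$ and straight lines, and substitute $y_1=y+(x-y)/s$ in the kernel formula of Theorem~\ref{thm:smth_ker_5.11}. Without the $s$-factor the resulting kernel bound is only $|x-y|^{-d+1}$. The factor $s^{m_K-1}$ is exactly what improves this to $|x-y|^{-d+m_K}$, i.e.\ a gain of $m_K$ derivatives instead of one (two, for the Hamiltonian constraint in this paper).

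The missing idea is to use the length of the sub-arc from $x(s)$ to $y$ as the scale, rather than $|y-y_1|$. By \ref{hyp:aug4}, $(\Bf_i)_{\Af}{}^{\Af'}\neq 0$ only if $d_{\Af'}\ge d_{\Af}-1$. Hence, in the iterated-integral expansion of the propagator $\Gamma(1,s)$, every chain from the degree-$0$ row $J$ to a column $\Af$ contains at least $-d_{\Af}$ factors $\dot x^i\Bf_i$. Each such factor has size $\lesssim|y-y_1|$ and is integrated over an interval of length at most $1-s$.

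Equivalently, reparametrize the sub-arc $[s,1]$ to unit length and conjugate by $\mu^{-d_{\Af}}$ with $\mu=(1-s)|y-y_1|$. The rescaled coefficients are then $O(\mu^{1+d_{\Af'}-d_{\Af}})$, hence bounded, and Gr\"onwall applies. Either way one gets $|\Gamma(1,s)_J{}^{\Af}|\lesssim((1-s)|y-y_1|)^{-d_{\Af}}$.

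Since $\Cf_i$ feeds $\pdv^\gamma(P^*\varphi)_K$ only into variables with $-d_{\Af}\ge m_K+|\gamma|-1$, the kernel
\[
S_J^{(\gamma,K)}(y,y_1,s)=\sum_{\Af}\Gamma(1,s)_J{}^{\Af}\,\dot x^i(s)\,(\Cf_i)_{\Af}{}^{(\gamma,K)}(x(s))
\]
then obeys $|S_J^{(\gamma,K)}|\lesssim(1-s)^{m_K+|\gamma|-1}|y-y_1|^{m_K+|\gamma|}$, as required. The same scale-adapted bookkeeping has to be carried through the weighted derivatives $L_y^{|\alpha|}|y_1-y|^{|\beta|}(\pdv_y+\pdv_{y_1})^\alpha\pdv_{y_1}^\beta$ that appear in that hypothesis.
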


\section{Gluing Up to Linear Obstructions}\label{sec:firstgluing}

In this section, we to prove a theorem analogous to Theorem 1.3 in \cite{mao2023} in the hyperbolic setting, namely gluing up to linear obstructions on a thin annulus.

\begin{theorem}[Gluing up to Linear Obstructions on the Thin Hyperbolic Annulus]\label{thm:gluing_linear}
Fix $s > \max\{\frac{d}{2},2\}$, and $\delta \ll 1$. Let $r$ denote the radial component of tensors $\cg,\ck, b$, and $\check{A},\check{B}$ denote the non-unit-normalized angular components.
Let $A$ be the euclidean annular approximation of $A_{2/\delta}$ and $\widetilde{A}$ be the euclidean annular approximation of $\widetilde{A}_{2/\delta}$.
Then we have $\ep_c = \ep_c(s) > 0$ and $M_c = M_c(s) > 0$ such that the following is true.
Let $(\cg,\ck)$ be a solution to Equation~\eqref{eqn:constraints_einstein} in $\wH^s \times \wH^{s-1}(\widetilde{A})$ such that in components, we have the hierarchy
\begin{equation}\label{eqn:metric_bounds}
\begin{cases}
||\cg^{\check{A}\check{B}}-b^{\check{A}\check{B}}||_{\wH^s(\widetilde{A})} \leq \delta^8\ep \\
||\cg^{r\check{A}}-b^{r\check{A}}||_{\wH^s(\widetilde{A})} \leq \delta^6\ep \\
||\cg^{rr}-b^{rr}||_{\wH^s(\widetilde{A})} \leq \delta^6\ep \\
\end{cases} 
\qquad
\begin{cases}
||\ck^{\check{A}\check{B}}||_{\wH^{s-1}(\widetilde{A})} \leq \delta^9\ep \\
||\ck^{r\check{A}}||_{\wH^{s-1}(\widetilde{A})} \leq \delta^7\ep \\
||\ck^{rr}||_{\wH^{s-1}(\widetilde{A})} \leq \delta^7\ep \\
\end{cases}
\end{equation}
Let $\cQ = \Qf[(\cg,\ck); A_{2/\delta}] \in \R^{(d+1)(d+2)/2},$ and let $\Qcal \subset \R^{(d+1)(d+2)/2}$ be a bounded open set such that 
\[\cQ \in \Qcal; \qquad B_{\delta^9 M_c \ep^2 (\cQ)} \subseteq \Qcal.\]
Let $\{(g_Q, k_Q)\}_{Q \in \Qcal}$ be $\Qcal$-admissible on $\widetilde{A}_{2/\delta}$ such that for all $Q, Q' \in \Qcal$, the bounds in Equation~\eqref{eqn:metric_bounds} hold for $(g_Q, k_Q)$, and 
\begin{equation}\label{eqn:lip_assump_Q}
||(g_Q - g_{Q'}, k_Q - k_{Q'})||_{\wH^s \times \wH^{s-1}(\widetilde{A})} \leq K|Q-Q'|.
\end{equation}

Then for $\ep < \ep_c$, $K \ep < \ep_c$, there is $(g,k) \in \wH^s \times \wH^{s-1}(\widetilde{A})$ and $Q \in \Qcal$ such that
\[\begin{cases}
||g^{\check{A}\check{B}}-b^{\check{A}\check{B}}||_{\wH^s \times \wH^{s-1}(\widetilde{A})} \leq \delta^6\ep \\
||g^{r\check{A}}-b^{r\check{A}}||_{\wH^s \times \wH^{s-1}(\widetilde{A})} \leq \delta^4\ep \\
||g^{rr}-b^{rr}||_{\wH^s \times \wH^{s-1}(\widetilde{A})} \leq \delta^4\ep. \\
\end{cases}
\qquad
\begin{cases}
||k^{\check{A}\check{B}}||_{\wH^{s-1}(\widetilde{A})} \leq \delta^{5}\ep \\
||k^{r\check{A}}||_{\wH^{s-1}(\widetilde{A})} \leq \delta^{5}\ep \\
||k^{rr}||_{\wH^{s-1}(\widetilde{A})} \leq \delta^{3}\ep \\
\end{cases} \]
In addition, $|Q - Q'|_H + \delta|Q-Q'|_M < \delta^6 M_c \ep^2$, and moreover $(g,k)$ agrees with $(\cg,\ck)$ and $(g_Q, k_Q)$ on either of the two boundaries in $\pdv \widetilde{A}$.
\end{theorem}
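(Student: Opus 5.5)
We follow the scheme of Theorem 1.3 in \cite{mao2023}: build a support-preserving right inverse (up to cokernel) for the linearized operator, then close a fixed point argument jointly in the perturbation $(\sigma,\pi)$ and the charge parameter $Q$. Write the system in the form \eqref{eqn:constraints}, $P(\sigma,\pi)=\Ncal(\sigma,\pi)$, where
\[
P(\sigma,\pi)=\bigl(\nabla_i\nabla_j\sigma^{ij}-\tr_b\sigma,\ \nabla_i\pi^{ij}\bigr)
\]
and $\Ncal$ collects the schematic terms $M^{(\ast)}_\sigma$, $N^{(\ast)}_\sigma$. Take the ansatz
\[
(\sigma,\pi)=\chi(\mathring\sigma,\mathring\pi)+(1-\chi)(\sigma_Q,\pi_Q)+(\tilde\sigma,\tilde\pi),
\]
with $\chi=\chi_{\log(1/4\delta),\log(4/\delta)}$ and the correction $(\tilde\sigma,\tilde\pi)$ compactly supported in $\widetilde A_{2/\delta}$. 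The correction must then solve
\[
P(\tilde\sigma,\tilde\pi)=f\coloneqq -[P,\chi]\bigl(\mathring\sigma-\sigma_Q,\mathring\pi-\pi_Q\bigr)+\bigl(\text{nonlinear remainder}\bigr).
\]
By Lemma \ref{lem:avg_charges}, the pairing of $f$ with the cokernel elements $y^\mu$ and the boost and rotation generators equals $\Qf[\cdot;A]$ evaluated on the two sides, up to quadratic error. So $\langle Z^A,f\rangle=Q-\cQ+O(\text{quadratic})$.

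\textbf{Linear step.} Because $b$ is standard hyperbolic, $\ker P^*$ has the full dimension $(d+1)(d+2)/2$. We write down a graded augmented system in the sense of Definition \ref{def:aug}.
\begin{itemize}
\item For $P_R^*$, use $\Phi=(f,\nabla f)$ with $\nabla\nabla f=P^*_Rf+bf$.
\item For $P_M^*$, use $(Y,\nabla Y)$ through the Killing identity $\nabla\nabla Y=\mathrm{Riem}\ast Y+\nabla(P_M^*Y)$.
\end{itemize}
This system is completely integrable. Proposition \ref{prop:aug_sys_RC} then gives recovery on curves, and Theorem \ref{thm:sol_op_5.15} produces $S$ with $P(Sf)=f-w_A\langle Z^A,f\rangle$ and $\supp Sf\subset\widetilde A$. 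The quantitative bounds come from working on the unit cylinder $C$ via $\Phi$, using Proposition \ref{prop:equiv_of_norms}. On $C$ the operator $S$ is bounded $H^{s-2}\to H^s$ (Hamiltonian) and $H^{s-1}\to H^s$ (momentum) with $O(1)$ constants. Transporting back, the coefficients of $P$ written in $z$-coordinates carry explicit powers of $\delta$ on angular derivatives and indices. Tracking these should yield $\|S_H\|\lesssim\delta^{-2}$ and $\|S_M\|\lesssim\delta^{-4}$ componentwise in the index hierarchy of \eqref{eqn:tensor_norm}. The lower-order curvature terms ($-\tr_b\sigma$, Christoffel symbols of size $O(1)$ in $r$) are handled through the $\Bf_i$ bounds, with Lemma \ref{lem:fio_estimate} used for the frequency-localized pieces.

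\textbf{Nonlinear step.} Define the map
\[
\mathcal T(\tilde\sigma,\tilde\pi,Q)=\Bigl(S\bigl(f(\tilde\sigma,\tilde\pi,Q)\bigr),\ \ Q-\langle Z^A,f\rangle\Bigr).
\]
We show it is a contraction on the set where $(\tilde\sigma,\tilde\pi)$ obeys the output hierarchy in the statement and $|Q-\cQ|_H+\delta|Q-\cQ|_M\le\delta^6M_c\ep^2$.
\begin{itemize}
\item The algebra property of $H^s$ on $C$ ($s>d/2$) bounds the quadratic terms $M^{(\ast)},N^{(\ast)}$ in the input hierarchy \eqref{eqn:metric_bounds}.
\item The commutator term $[P,\chi]$ is linear, but $\langle Z^A,\cdot\rangle$ is exactly absorbed by the charge choice, so only its image under $S$ matters. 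Multiplying the $\delta^8/\delta^6$ input by $\delta^{-2}$ and the $\delta^9/\delta^7$ input by $\delta^{-4}$ gives the output powers $\delta^6,\delta^4$ and $\delta^5,\delta^3$.
\item Lipschitz dependence in $Q$ follows from \eqref{eqn:lip_assump_Q} and the condition $K\ep<\ep_c$.
\end{itemize}
Admissibility condition (4) makes $Q\mapsto\langle Z^A,f\rangle$ a near-identity perturbation. Brouwer's theorem, or a contraction in $Q$, then yields $Q$ with vanishing obstruction, so $P(\sigma,\pi)=\Ncal(\sigma,\pi)$ holds exactly. Agreement on $\pdv\widetilde A$ follows from the support properties of $\chi$ and $S$.

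\textbf{Main obstacle.} The hardest step is the anisotropic bound on $S$: showing the $\delta^{-2}$ and $\delta^{-4}$ losses with the correct index-dependent weights. The recovery-on-curves kernel must be built from curves adapted to $C$. Mixed radial/angular components of $\nabla_i\nabla_j\sigma^{ij}$ pick up different $\delta$-weights, and in particular the momentum operator couples $\pi^{rr}$ to angular derivatives of $\pi^{r\check A}$ with weight $\delta^{-1}$. I would first compute $P$ explicitly in $(z',z_d)$ coordinates and write it as $P_0+\delta P_1+\dots$, with $P_0$ the flat operator. I would then invert $P_0$ with the flat Bogovskii operator of \cite{mao2023} on $C$ and treat the remaining terms perturbatively. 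If that fails, I would apply Theorem \ref{thm:sol_op_5.15} directly on $C$, with the rescaled augmented system whose coefficients $\Bf_i$ grow at most like $\delta^{-1}$, and count the powers.
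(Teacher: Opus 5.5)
Your overall architecture matches Steps 1, 2, 4 and 5 of the paper: a cutoff ansatz, a correction $(\tilde\sigma,\tilde\pi)$ from a support-preserving right inverse up to cokernel, and a charge fixed point in the weighted norm $|Q|_H+\delta|Q|_M$ via Lemma~\ref{lem:avg_charges} and admissibility. Running the two iterations jointly rather than nested is harmless. The gap is the step you flag yourself, the anisotropic bounds $\|S_f\|\lesssim\delta^{-2}$ and $\|S_Y\|\lesssim\delta^{-4}$, and neither of your routes produces them.

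First, the claim that $S$ is $O(1)$ on $C$ cannot hold. By Proposition~\ref{prop:equiv_of_norms} the passage between $C$ and $A$ is an isometry, so any $\delta$-loss must already be present on $C$. The loss comes from the curvature terms, which are not $O(1)$. In unit-scale angular coordinates (geometric length $1/\delta$), $b_{AA}=\sinh^2 r\,g_{AA}\sim\delta^{-2}$ and $\Gamma^r_{AA}=-\sinh r\cosh r\,g_{AA}\sim\delta^{-2}$. So $P$ is not of the form $P_0+\delta P_1+\cdots$ with $P_0$ flat. The terms $b_{ij}\sigma^{ij}$ and $\Gamma\,\partial\sigma$ acting on $\sigma^{AA}$ dominate the flat part by $\delta^{-2}$. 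If you rescale $\sigma^{AA}$ by $\delta^2$ to tame them, the principal part degenerates to $\delta^2\partial_{z'}^2$. Either way the flat Bogovskii operator of \cite{mao2023} is not a uniform approximate inverse, and a perturbative inversion does not close.

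Second, applying Theorem~\ref{thm:sol_op_5.15} with $\Bf_i$ of size $\delta^{-1}$ or $\delta^{-2}$ along angular paths does not help. The Gronwall-type ODE bounds entering $A_S$ then give constants exponential in a negative power of $\delta$, not polynomial powers you can count; the paper remarks exactly this.

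The missing idea is a cancellation in the augmented system. Eliminating variables in \eqref{eqn:ang_aug_sys} gives $\ddot\alpha_A+\Omega^2 g_{AA}\alpha_A=\text{source}$, and similarly for $Y_r$, because $\cosh^2 r-\sinh^2 r=1$. The transport is therefore an oscillator of frequency $\Omega\sqrt{g_{AA}}=O(1)$ rather than exponentially growing. The only losses are explicit prefactors: $\sinh r\cosh r\sim\delta^{-2}$ in the $Ar$ component of $S_f$, and $(\sinh r\cosh r)^2\sim\delta^{-4}$ in the $rr$ component of $S_Y$.

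The paper builds $S=S^1+S^2+S^3$ around this:
\begin{itemize}
\item $S^1$ is a unit-scale radial operator that is $\SO(d)$-equivariant, hence commutes with angular derivatives (Lemma~\ref{lem:commuting_angular}).
\item $S^2$ is an angular operator with the explicit oscillator kernel; its angular regularity gain comes from Lemmas~\ref{lem:averaging_sobolev_estimate} and~\ref{lem:fio_estimate}.
\item $S^3$ globalizes the cokernel.
\end{itemize}

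The resulting bounds are genuinely componentwise: $(S_f)^{AA}=O(1)$, $(S_f)^{rr,Ar}=O(\delta^{-2})$, $(S_Y)^{rr}=O(\delta^{-4})$, $(S_Y)^{Ar,AA}=O(\delta^{-2})$. This structure is needed for the conclusion. Your accounting ``input $\times\,\delta^{-4}$'' sends the $\delta^7\ep$ bound on $\ck^{r\check A}$ to $\delta^3\ep$, not the asserted $\delta^5\ep$. The paper instead bounds the sources $F_Q\lesssim\delta^6\ep$ and $G_Q\lesssim\delta^7\ep$, then applies the component bounds of $S$.

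A smaller slip: $\chi_{\log(1/4\delta),\log(4/\delta)}$ is a bump equal to $1$ on $A_{2/\delta}$ and vanishing on both sides. Your ansatz then does not interpolate between inner and outer data; you need a transition cutoff as in \eqref{eqn:naive_guess}.
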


In addition to the theorem above, we have the following two results:

\begin{proposition}\label{prop:properties_linear}
Fix conventions as in Theorem~\ref{thm:gluing_linear}. Then
\begin{enumerate}
    \item (Lipschitz Continuity) The map $(\cg,\ck) \mapsto (g,k,Q)$ is Lipschitz as a map from the subset of $\wH^s \times \wH^{s-1}$ such that Equation~\eqref{eqn:metric_bounds} holds into $\wH^s \times \wH^{s-1} \times \R^{(d+1)(d+2)/2}$.
    \item (Persistence of Regularity). Fix $m \in \N $, $B = \frac{1}{\delta^6}$.
    Then assuming in addition that $\ep = \delta^{2}\ep_c$, for 
    \[(\cg,\ck) \in \wH^{s+m} \times \wH^{s+m-1} (\widetilde{A}) \quad \text{and} \quad \{(g_Q,k_Q)\}  \subseteq \wH^{s+m} \times \wH^{s+m-1}\] 
    we have $(g,k) \in \wH^{s+m} \times \wH^{s+m-1}(\widetilde{A})$, and 
    \[||(g-b)||_{ \wH^{s+m} \times \wH^{s+m-1}(\widetilde{A})} \leq B \cdot C \left(||(\cg,\ck)||_{\wH^{s+m} \times \wH^{s+m-1}(\widetilde{A})} + ||(g_Q-b,k_Q)||_{\wH^{s+m} \times \wH^{s+m-1}(\widetilde{A})}\right). \]
\end{enumerate}
\end{proposition}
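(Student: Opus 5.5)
The proof reuses the fixed point construction behind Theorem~\ref{thm:gluing_linear}, which I take to be the Mao--Oh--Tao scheme transplanted to the thin annulus. The unknown is a correction $(\tsigma,\tpi)$ together with a charge $Q$. The ansatz is
\[
(\sigma,\pi)=\chi(\cg\text{-data})+(1-\chi)(g_Q\text{-data})+(\tsigma,\tpi),
\]
with $\chi=\chi_{\log(1/4\delta),\log(4/\delta)}$. The correction is defined by
\[
(\tsigma,\tpi)=\Rcal\bigl[-P(\text{cutoff error})+\text{nonlinear terms of \eqref{eqn:constraints}}\bigr],
\]
where $\Rcal=S-\Rcal'$ is the Bogovskii-type right inverse modulo cokernel. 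It comes from Theorem~\ref{thm:sol_op_5.15} via the graded augmented system of Proposition~\ref{prop:aug_sys_RC}, and it is built on $C$ and transferred to $\widetilde A$ by Proposition~\ref{prop:equiv_of_norms}. The charge is updated by $Q\mapsto \cQ+(\text{charge of the nonlinear error, via Lemma~\ref{lem:avg_charges}})$, so that the cokernel obstruction $w_A\langle Z^A,\cdot\rangle$ vanishes at the fixed point. The map $\mathcal T$ is a contraction for $\ep<\ep_c$, $K\ep<\ep_c$, on the set cut out by the output bounds of Theorem~\ref{thm:gluing_linear}. Its contraction constant is $C\delta^{-2}\ep\cdot(\text{data weights})\lesssim \ep_c$, which is exactly the numerology that fixed the $\delta^6$, $\delta^7$ hypotheses.

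\textbf{Lipschitz continuity.} I view $\mathcal T=\mathcal T_{(\cg,\ck)}$ as depending on a parameter, namely the initial data $(\cg,\ck)$ in the set where \eqref{eqn:metric_bounds} holds. The data enters $\mathcal T$ in three places: linearly through the cutoff term $\chi(\cg-b,\ck)$, linearly through the charge $\cQ=\Qf[(\cg,\ck);A_{2/\delta}]$, and through no other route, since the nonlinearity only sees the full $(\sigma,\pi)$. The cutoff term is Lipschitz in $\wH^s\times\wH^{s-1}$ because multiplication by the fixed smooth $\chi$ is bounded on $\wH^s$; its $\delta$-dependence is harmless since $\chi$ depends only on $r$. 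The charge map is Lipschitz because the averaged charge is integration against smooth weights, $y^\mu$ and their derivatives, on $\widetilde A$. Together with the Lipschitz bound of $\Rcal$ and the assumption \eqref{eqn:lip_assump_Q}, this makes $\mathcal T$ Lipschitz in the parameter, uniformly in the unknowns. The standard lemma for parametrised contractions with constant $\theta<1$ then gives
\[
\|\text{fixed point}(a)-\text{fixed point}(a')\|\le (1-\theta)^{-1}\,\mathrm{Lip}_{a}(\mathcal T)\,\|a-a'\|.
\]
Since $(g,k,Q)$ is read off linearly from the fixed point, part (1) follows.

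\textbf{Persistence of regularity.} For part (2) I first note that $\Rcal$ gains regularity uniformly: Theorem~\ref{thm:sol_op_5.15} gives $\Rcal:\wH^{s+m-2}\to\wH^{s+m}$ and $\wH^{s+m-1}\to\wH^{s+m}$, with the same $\delta$-losses $\delta^{-2}$ (Hamiltonian) and $\delta^{-4}$ (momentum) at every order. The operator is built on $C$ by averaging over curves with smooth compactly supported kernels, so higher derivatives only cost constants depending on $m$. I then estimate the fixed point in $\wH^{s+m}$ using tame (Moser) product estimates for the schematic nonlinearities $M_\sigma,N_\sigma$: each nonlinear term is bounded by $C\|(\sigma,\pi)\|_{\wH^{s}}\|(\sigma,\pi)\|_{\wH^{s+m}}$, plus lower order terms. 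This uses $s>d/2$, and the $\wH^s$ norm is already controlled at the low level. Multiplying by the solution operator loss, the high norm satisfies
\[
X_{s+m}\le C\delta^{-4}\bigl(\text{data}_{s+m}\bigr)+C\delta^{-4}\,\delta^{-2}\ep\,X_{s+m}.
\]
The extra assumption $\ep=\delta^2\ep_c$ is exactly what makes the coefficient $C\delta^{-6}\ep=C\delta^{-4}\ep_c$ absorbable. I would expect to need the anisotropic weights of \eqref{eqn:metric_bounds} to beat one more $\delta^{-2}$ here. Absorbing gives the bound with $B=\delta^{-6}$ once the cutoff and charge contributions, each of size $\lesssim$ data, pass through $\Rcal$ and the tensor weights.

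\textbf{Main obstacle.} The hardest step is the bookkeeping of anisotropic tame estimates. Products of tensor components with different angular index counts must respect the $\delta$-weights in \eqref{eqn:tensor_norm}. I would handle this by working entirely on $C$ via Proposition~\ref{prop:equiv_of_norms}. There the norms are isotropic, so the product and commutator estimates are the standard ones, and every $\delta$-power sits in the explicit coefficients of the schematic nonlinearity. Finally, I would verify by induction on $m$ that no loss worse than $\delta^{-6}$ accumulates.
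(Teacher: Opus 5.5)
\textbf{Part (1).} Your argument is fine. A contraction that is uniform in a parameter and Lipschitz in that parameter has a Lipschitz fixed point, and this is all the paper's one-line justification amounts to.

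\textbf{Part (2): the gap is at the absorption step.} Your inequality puts the coefficient $C\delta^{-4}\cdot\delta^{-2}\ep$ in front of $X_{s+m}$. With $\ep=\delta^2\ep_c$ this equals $C\delta^{-4}\ep_c$. Since $\ep_c=\ep_c(s)$ does not depend on $\delta$ and $\delta\ll 1$, this coefficient is large and cannot be absorbed. You half-notice this (``beat one more $\delta^{-2}$''), but even that would leave $C\delta^{-2}\ep_c$. The underlying problem is that you multiply the worst solution-operator loss by a worst-case coefficient for the nonlinearity. That loss is $\delta^{-4}$, and it occurs only in $(S_Y)^{rr}$. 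No bound that is uniform over components in this way can yield a $\delta$-independent coefficient. For the same reason, $B=\delta^{-6}$ is not actually derived: your data term carries $\delta^{-4}$, and the remaining $\delta^{-2}$ is left to unspecified ``tensor weights''.

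\textbf{What is missing} is the componentwise bookkeeping of Step 4, carried over to the high norm.
\begin{enumerate}
\item Measure $\pdv_z^\alpha(\tsigma,\tpi)$ with the same weights as the distance $d$ used for the contraction: $\delta^{4}\ep$ for $\tsigma^{rr},\tsigma^{Ar}$; $\delta^6\ep$ for $\tsigma^{AA}$; $\delta^3\ep$ for $\tpi^{rr}$; $\delta^5\ep$ for $\tpi^{Ar},\tpi^{AA}$.
\item Use the componentwise bounds $(\delta^{-2},\delta^{-2},1)$ for $S_f$ and $(\delta^{-4},\delta^{-2},\delta^{-2})$ for $S_Y$ on the $rr,Ar,AA$ components.
\item Charge a factor $\delta^{-1}$ per raised angular index only where the conversion to $z$-coordinates actually produces it.
\end{enumerate}
The worst coupling in the linearization is then $\tsigma^{Ar}\tpi^{AA}$ feeding $\tpi^{rr}$, at cost $\delta^{4-2-4}=\delta^{-2}$. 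So the coefficient is $\delta^{-2}\ep=\ep_c$, and this is exactly what the hypothesis $\ep=\delta^2\ep_c$ is for. The data term picks up $\delta^{-4}$ from the momentum operator and $\delta^{-2}$ from passing from $G_Q$ back to $\bpi_Q$, which gives $B=\delta^{-6}$.

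\textbf{Comparison of routes.} Once this is repaired, your route is a legitimate alternative to the paper's. You estimate the fixed point directly in $\wH^{s+m}$ through all-order mapping bounds and tame products. The paper instead differentiates \eqref{eqn:fixed_point_sigmapi}, moves $\pdv_z^\alpha$ through $S$ with the commutator bounds of Proposition~\ref{prop:soln_op_bounds}(4), and inverts $I-S\circ D_{(\tsigma,\tpi)}(F+M,G+N)$ by coercivity. Your route avoids commutator estimates altogether.

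\textbf{Two smaller corrections.}
\begin{enumerate}
\item The all-order bounds with a fixed $\delta$-loss come from Proposition~\ref{prop:soln_op_bounds}(3), not from Theorem~\ref{thm:sol_op_5.15}. For the $O(1/\delta)$-long paths, that theorem's constants are $\delta$-dependent and, as the paper itself notes, far from these losses.
\item Before absorbing, you need to know $X_{s+m}<\infty$, for example by bounding the Picard iterates uniformly in $\wH^{s+m}$.
\end{enumerate}
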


Note that $\ep = \delta^2 \ep_c$ is unnecessary in the persistence of regularity statement; one can trade this dependency for a worse bound $B$, by working in the same $\delta$-dependent ball and building a rescaled norm from that.

Before explaining the proof, we make the following remark:

\begin{remark}
The bound on both the metric and the second fundamental form $||g||_{\wH^s}$, $||k||_{\wH^s}$ are independent on the Sobolev regularity $s$, and consequently the dimension. 
This is also true for the persistence of regularity argument, since the factor of  $1/\delta^6$ appears independent of any differentiation.
It is worth noting that existing results, such as \cite{chrusciel2007gluing}, had factors of $\delta$ depending on the dimension.
By Sobolev embedding, any dependence on regularity is equivalent to the dependence on the dimension by replacing $s > d/2$.
Converting our norm to their norm, we find that the overall power for the Hamiltonian constraint solution operator is $\delta^{-3}$, which is a genuine improvement of one factor of $\delta$ over their result; however, it is not optimal, as they explain in \cite[Remark 3.3]{chrusciel2007gluing} that $\delta^{-2}$ would be the sharp bound.
Converting their bound to our setting would require an estimate of $\delta^8 \epsilon$, at least in dimensions $n=5,6,7$, due to a different choice of geometric norms.
This is consistent with the fact that we are inverting the operator $P$ directly, rather than $P^*P$.
\end{remark}

Let us now adopt a procedure with which we will prove Theorem \ref{thm:gluing_linear}.
We will largely follow the proof strategy outlined in \cite{mao2023}, with some notable differences.

\begin{enumerate}
\item Write down the Einstein Constraint equation and the linearized operator, and find an appropriate reformulation in terms of a trace-adjusted difference $(\sigma_{ij}, \pi_{ij})$ sufficiently close to our asymptotically hyperbolic metric $(b, 0)$.
\item Determine the adjoint of the linearized operator, the cokernel, and hence the conserved quantities.
\item Verify, using \cite{isett2025integral}, that a Bogovskii-type operator $S$ exists so that $S$ is a right inverse of our linearized constraint $P$ up to a finite dimensional cokernel. 
First, we write down a completely integrable graded augmented system in the sense of Definition~\ref{def:aug} to verify the Recovery on Curves condition for an admissible family of curves $x(y,y_1,s)$ in the sense of Proposition~\ref{prop:aug_sys_RC}. 
To proceed, we use three solution operators:
\begin{enumerate}[label=\alph*)]
    \item For the first solution operator, for $y \in A_{2/\delta}$ we take paths to $y_1$ supported in a small neighborhood of the center of the annulus with the same angular component as $y$.
    We integrate the augmented system over these paths of geometric length $O(1)$ in both the radial and angular directions.
    Applying Theorem~\ref{thm:smth_ker_5.11} and Lemma~\ref{lem:b_eta_5.17} of \cite{isett2025integral} with a smoothing function $\eta_1$ around these points yields a solution operator $S^1 = (S^1_{f},S^1_Y)$ so that
    \[P S^1 h = h - B_{\eta_1} h \quad \text{where} \hspace{0.1cm} B_{\eta_1} \colon C_c^{\infty}(A_{2/\delta}) \to C_c^{\infty}(A_{2/\delta}) \hspace{0.1cm} \text{is smoothing}.\]
    This allows us to obtain the desired radial estimates with optimal regularity.
    \item For the second operator solving $Pu = B_{\eta_1} h$, we first take a radial path followed by some fixed angular path in some direction of geometric length $O(1/\delta)$; we then use a rotation to move an arbitrary point $y_1$ onto the great circle of this coordinate. 
    Moreover, it can be arranged that 
    \[PS_{\eta_2}h = B_{\eta_1}h - B_{\eta_2 }B_{\eta_1}h\]
    where $B_{\eta_2}$ is supported in patch of size $O(1/\delta)$ in $A_{2/\delta}.$
    \item Finally, the third operator solves $Pu = B_{\eta_2 }B_{\eta_1}h$, where now the source is confined to two unit-sized balls; a modification of the stereographic projection can be chosen to contain both.
    Another application of the same results for the second operator reduces the problem to a finite dimensional cokernel.
\end{enumerate}
\item Begin with a Naive approximation and construct an equation for the correction. Apply modified Moser and product estimates on $H^s(C)$ to run Picard Iteration such that for each $Q$, $(\sigma_Q, \pi_Q)$ solve the equations up to the cokernel.
\item We conclude by running a second fixed point argument on the charges so that the cokernel term vanishes.
\end{enumerate}

\begin{proof}[Proof of Theorem \ref{thm:gluing_linear}]
\textbf{Step 1}
We begin by recalling that our linearized constraint equations are
\begin{equation}
\begin{aligned}
\begin{cases}
\nabla^i_b \nabla^j_b \sigma_{ij} - \tr_b \sigma = M^{(0)}_{\sigma}(\pi,\pi) + M^{(1)}_{\sigma}(\nabla_b \sigma, \nabla_b \sigma) + M^{(2)}_{\sigma}(\sigma, \nabla_b^2 \sigma) \\
\nabla^i_b \pi_{ij} = N^{(1)j}_{\sigma}(\sigma, \nabla_b \pi) + N^{(0)j}_{\sigma}(\nabla_b \sigma, \pi)
\end{cases}
\end{aligned}
\end{equation}
where we have trace shifted to remove the Laplacian.

\textbf{Step 2}

The expression for the adjoint operators are
\begin{equation}
P^*_R f = \Hess_b(f) - bf; \qquad P^*_M Y= - \frac{1}{2} \Lcal_b Y
\end{equation}
given explicitly by 
\[(P^*_R f)^{ij} = \nabla^i_b \nabla^j_b(f) -fb^{ij}; \qquad (P^*_M Y)^{ij} = - \frac{1}{2} \left(\nabla_b^i Y^j + \nabla^j_b Y^i \right) = - \nabla_b^{(i} Y_{\vphantom{b}}^{j)} \]
This is a standard computation, for example see Threorem 1.14 of \cite{isett2025integral}.

Recall that the principal symbol of the linearized scalar curvature is $(p^*)_{ij}(\xi) = -\xi_i \xi_j$, which is complex elliptic and hence satisfies the finite cokernel condition.
Indeed, it is well-known that the cokernel has dimension at most $d+1$, for example see (\cite{corvino2000scalar}, \cite{isett2025integral}). 
For the momentum constraint, the principal symbol is $(p^*)_{ij}^k (\xi) = \frac{-i}{2}(\xi_i \delta^k_j + \xi_j \delta^k_i)$.
Notice that being $0$ is exactly the condition of being a Killing vector field.
This has kernel of dimension at most $\frac{d(d+1)}{2}$, and the authors of \cite{isett2025integral} deduce that in our case the augmented system is once again completely integrable.

The goal of this section is to prove the following structure of the cokernel elements:

\begin{lemma}\label{lem:cok_on_AC}
Let $A,C$ be as in Definition~\ref{defn:eucl_ann}.
Then the induced kernels of $P^*_R, P^*_M$ on $A$ and $C$ are given by
\begin{equation}
\begin{aligned}
\begin{cases}
\left\{\sqrt{1+||x||_2^2}, x^1, \dots, x^d\right\} \in \ker P^*_R \\
\left\{\sqrt{1+||x||_2^2}\delta_{ji} + x^l \delta_{j0}, x^k \delta_{jl}-x^l \delta_{jk}  \right\} \in \ker P^*_M.
\end{cases}
\end{aligned}
\end{equation}
and 
\begin{equation}
\begin{aligned}
\begin{cases}
\left\{\sqrt{1+z_d^2}, \left( \frac{2\left(z_d+\frac{1}{\delta}\right)}{1+||z'||_2^2}z^1\right), \dots, \left( \frac{2\left(z_d+\frac{1}{\delta}\right)}{1+||z'||_2^2}z^d\right)\right\} \in \ker P^*_R \\
\left\{\sqrt{1+z_d^2}\delta_{ji} + \left( \frac{2\left(z_d+\frac{1}{\delta}\right)}{1+||z'||_2^2}z^l\right) \delta_{j0}, \left( \frac{2\left(z_d+\frac{1}{\delta}\right)}{1+||z'||_2^2}z^k\right) \delta_{jl}-\left( \frac{2\left(z_d+\frac{1}{\delta}\right)}{1+||z'||_2^2}z^l\right) \delta_{jk}  \right\} \in \ker P^*_M.
\end{cases}
\end{aligned}
\end{equation}
respectively.
\end{lemma}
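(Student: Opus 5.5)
The plan is to start from the known description of the kernels on the hyperboloid model and transport it through the coordinate changes of Definition~\ref{defn:eucl_ann}. On $\Hb^d\subset\R^{d,1}$, realized as $\{(y^0)^2-\sum_i (y^i)^2=1,\ y^0>0\}$, the kernel of $P_R^*f=\Hess_b f - bf$ is spanned by the restrictions of $y^0,\dots,y^d$. The kernel of $P_M^*$ consists of the Killing fields, namely the boosts $y^0\partial_i+y^i\partial_0$ and the rotations $y^i\partial_j-y^j\partial_i$, as recalled before Lemma~\ref{lem:charges}.

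\textbf{Step 1: the identification for $A$.} I would parametrize $\Hb^d$ by the spatial coordinates $x=(y^1,\dots,y^d)\in\R^d$. Then $y^0=\sqrt{1+\|x\|_2^2}$ and $y^i=x^i$. The pullback of $b$ in these coordinates has $r=\operatorname{arcsinh}\|x\|_2$, so $\|x\|_2=\sinh r\approx 1/\delta$ on $A_{2/\delta}$. This is exactly the sense in which $A$ approximates $A_{2/\delta}$.

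The point is that $P_R^*$ and $P_M^*$ are natural operators. The condition $\Hess_b f=bf$ and the Killing equation are diffeomorphism invariant. Pulling back the ambient linear functions and Killing fields along this chart therefore gives elements of the induced kernel on $A$. The components listed in the lemma are exactly these functions, written with $y^0\mapsto\sqrt{1+\|x\|^2}$ and $y^i\mapsto x^i$.

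\textbf{Step 2: the count.} Section~\ref{sec:firstgluing} (Step 2) bounds the kernel dimensions by $d+1$ for $P_R^*$ and $d(d+1)/2$ for $P_M^*$, via complete integrability of the augmented system. The listed functions are linearly independent, since they are restrictions of independent linear functions and Killing fields to an open set. Hence they span the full kernels on $A$.

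\textbf{Step 3: transport to $C$.} I would compose with $\Phi$ from \eqref{eqn:stereo_proj}. Since $z_d=\|x\|_2-1/\delta$, we get $\|x\|_2=z_d+1/\delta$. Inverting the stereographic-type projection on the lower hemisphere gives
\[
x'=\frac{2(z_d+1/\delta)}{1+\|z'\|_2^2}\,z'
\]
up to the normalization of the tangent-plane projection. The $x_d$ component has an analogous rational expression.

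Substituting these into the functions of Step 1 yields the stated formulas. Here I would read the first entry $\sqrt{1+z_d^2}$ as the transported $y^0$, taken in the convention in which the radial variable is the one used in the paper's normalization. Because $\Phi$ is a diffeomorphism and the operators are natural, the kernel property and the dimension count carry over unchanged.

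\textbf{Main obstacle.} The difficulty is purely bookkeeping: getting the exact inverse of $\Phi$ and checking it against the displayed formulas. In particular, one must confirm how $y^0$ becomes $\sqrt{1+z_d^2}$ rather than $\sqrt{1+(z_d+1/\delta)^2}$; this appears to depend on the chosen normalization or shift of the radial coordinate. I would first compute $\Phi^{-1}$ explicitly and reconcile any discrepancy by adjusting constants, which is harmless since kernels are linear spaces. Where needed, I would also verify the tensor components of the Killing fields with the index conventions of Definition~\ref{def:sobolev}.
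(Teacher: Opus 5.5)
Your route is the paper's. You start from the ambient kernel on $\Hb^d$; the paper verifies $\Hess_b y^\mu = b\,y^\mu$ via the Gauss formula rather than citing it. You then use the chart $x^i=y^i$ with $y^0=\sqrt{1+\|x\|_2^2}$ and compose with
$\Phi^{-1}(z',z_d)=\bigl(\tfrac{2(z_d+1/\delta)}{1+\|z'\|_2^2}z',\,(z_d+1/\delta)\tfrac{\|z'\|_2^2-1}{1+\|z'\|_2^2}\bigr)$,
which is exactly the inverse map written in the paper. Your Step 2 dimension count, showing that the listed elements span the kernels, is an addition the paper does not make (it only proves membership), and it is fine.

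The step that fails is your plan to reconcile $\sqrt{1+z_d^2}$ with $\sqrt{1+(z_d+1/\delta)^2}$ ``by adjusting constants.'' Linearity of the kernel does not help here.
\begin{itemize}
\item A function of $r$ alone lies in $\ker P_R^*$ if and only if $f''=f$ and $\coth(r)f'=f$. This forces $f=c\cosh r=c\sqrt{1+(z_d+1/\delta)^2}$.
\item $\sqrt{1+z_d^2}$ depends only on $r$ but is not of this form, since $\frac{1+(R-1/\delta)^2}{1+R^2}$ is not constant in $R=z_d+1/\delta$. So it is simply not in the kernel.
\item No change of the radial shift rescues it either. The other entries of the same list use $\|x\|_2=z_d+1/\delta$, so no single convention makes all entries correct at once.
\item Likewise, the last ``angular'' entry $\frac{2(z_d+1/\delta)}{1+\|z'\|_2^2}z^d$ is not $x^d\circ\Phi^{-1}$. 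The correct function is $(z_d+1/\delta)\frac{\|z'\|_2^2-1}{1+\|z'\|_2^2}$.
\item The same corrections apply to the boost entries for $P_M^*$.
\end{itemize}

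Hence the sentence ``substituting these yields the stated formulas'' cannot be proved, because the displayed $z$-formulas contain misprints. What is true on $C$ is that the kernels are spanned by the pullbacks $x^\mu\circ\Phi^{-1}$ (with $x^0=\sqrt{1+\|x\|_2^2}$) together with the pulled-back boosts and rotations. The paper's own proof has the same slip: it writes down $\Phi^{-1}$ and then asserts $\sqrt{1+z_d^2}$ without computation. Afterwards it only uses the pullbacks (``thinking of them as $x^i(z)$''). State the lemma with the corrected formulas and your argument is complete.
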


\begin{proof}[Proof of \ref{lem:cok_on_AC}]
Recall that in the ambient coordinates $\{y^0,\dots,y^d\}$ on the Hyperboloid and the annular subset $A_{2/\delta}$, the cokernel elements are explicitly given by 
\[\{y^0,\dots,y^d\}\qquad \text{for} \hspace{0.1cm} P^*_Rf\]
and
\[\{y^0\delta_{ji} + y^l\delta_{j0},y^k\delta_{jl}-y^l \delta_{jk}\}\qquad \text{for} \hspace{0.1cm} P^*_MY_{\alpha}.\]
Indeed, to see the former one verifies that
\begin{align*}(P^*_R y^{\mu})(X,Y) &= X(Yy^{\mu}) - (\overline{\nabla}_X Y) y^{\mu} - b(X,Y)y^{\mu} \\
&= \nabla_X(Y)y^{\mu} - (\overline{\nabla}_X Y) y^{\mu} - b(X,Y)y^{\mu} \\
&= (\overline{\nabla}_X Y) y^{\mu} + \Pi(X,Y) y^{\mu} - (\overline{\nabla}_X Y) y^{\mu} - b(X,Y)y^{\mu} \\
&= (\Pi(X,Y) - b(X,Y))y^{\mu} = 0.
\end{align*}
To see the latter, we note that both the rotation and boost vector fields are affine killing vector fields.

We wish to translate these cokernel elements into both $A$ and $C$.
Using polar coordinates on the Hyperboloid, write
\[y^0 = \cosh(r); \qquad y^i = \sinh(r)\theta^i\]
for $\theta \in \Sb^{d-1}$.
From this perspective, it is clear that by the Euclidean annular approximation, we can take
\[x^i = y^i,\]
and observe that $||x||_2 = \sinh(r)$, $x^0 = \cosh(r) = \sqrt{1+||x||_2^2}$.
Thus if we are interested in studying the Euclidean annular analog of Equation~\eqref{eqn:constraints} whose left-hand sides expand out in terms of the coordinates $x^i$ as
\begin{align*}
\begin{cases}
\pdv_{x^i} \pdv_{x^j} \sigma^{ij} - \Gamma^k_{ij}\pdv_{x^k} \sigma^{ij} - b_{ij} \sigma^{ij} \\
\pdv_{x^i} \pi^{ij} + \Gamma^i_{ik}\pi^{kj} + \Gamma^j_{ik} \pi^{ik} 
\end{cases}
\end{align*}
then the corresponding cokernel elements are given by 
\begin{equation}
\begin{aligned}
\begin{cases}
\left\{\sqrt{1+||x||_2^2}, x^1, \dots, x^d\right\} \in \ker P^*_R \\
\left\{\sqrt{1+||x||_2^2}\delta_{ji} + x^l \delta_{j0}, x^k \delta_{jl}-x^l \delta_{jk}  \right\} \in \ker P^*_M.
\end{cases}
\end{aligned}
\end{equation}

Likewise, we can take the pushforward of the expressions to the $z$ coordinates on $C$, which read
\begin{align*}
\begin{cases}
\pdv_{z^i} \pdv_{z^j} \sigma^{ij} - \Gamma^k_{ij}\pdv_{z^k} \sigma^{ij} - b_{ij} \sigma^{ij} \\
\pdv_{z^i} \pi^{ij} + \Gamma^i_{ik}\pi^{kj} + \Gamma^j_{ik} \pi^{ik} 
\end{cases}
\end{align*}
We note that in this step, the expressions look identical because this side of the equation involves only scalar quantities that are independent of choice of coordinates. 
Nevertheless, the tensors $b_{ij}$ and $\Gamma^k_{ij}$ contain factors of $\delta^{-1}$ for each raised angular component of $\sigma$ and $\pi$.
This will matter when we estimate linear terms of this form later.

From the inverse map 
\begin{align*}
    C &\to A \\
(z',z_d) &\mapsto \left( \frac{2\left(z_d+\frac{1}{\delta}\right)}{1+||z'||_2^2}z', \left(\frac{1}{\delta}+z_d\right)\frac{||z'||_2^2 - 1}{1+||z'||_2^2} \right)    
\end{align*}
we obtain the corresponding pushforward cokernel elements
\begin{equation}
\begin{aligned}
\begin{cases}
\left\{\sqrt{1+z_d^2}, \left( \frac{2\left(z_d+\frac{1}{\delta}\right)}{1+||z'||_2^2}z^1\right), \dots, \left( \frac{2\left(z_d+\frac{1}{\delta}\right)}{1+||z'||_2^2}z^d\right)\right\} \in \ker P^*_R \\
\left\{\sqrt{1+z_d^2}\delta_{ji} + \left( \frac{2\left(z_d+\frac{1}{\delta}\right)}{1+||z'||_2^2}z^l\right) \delta_{j0}, \left( \frac{2\left(z_d+\frac{1}{\delta}\right)}{1+||z'||_2^2}z^k\right) \delta_{jl}-\left( \frac{2\left(z_d+\frac{1}{\delta}\right)}{1+||z'||_2^2}z^l\right) \delta_{jk}  \right\} \in \ker P^*_M.
\end{cases}
\end{aligned}
\end{equation}
\end{proof}
Throughout the rest of the paper, we will simply denote these by their value in the $x^i$ coordinates, thinking of them as $x^i(z)$.

\textbf{Step 3}
We now turn our attention toward the solution operator $S$.

For our purposes, it will suffice to know that $S$ exists and has certain bounds depending on powers of $\delta$.
While we can use the theorems in Appendix \ref{sec:soln_op_thms}, a direct application will not be optimal, and therefore we must adapt a procedure for improvement.

In this step, we aim to prove the following proposition:

\begin{proposition}\label{prop:soln_op_bounds}
Consider the system
\[\begin{cases}
    P_R \sigma = \nabla_i \nabla_j \sigma^{ij}-b_{ij}\sigma^{ij} = F+M \\
    P_M \sigma = \nabla_i \pi^{ij} = G+N.
\end{cases}\]
Then there is a Bogovskii-type solution operator $S = (S_{f},S_Y)$ satisfying the following:
\begin{enumerate}
    \item \textbf{Support property}: We have that $\supp S \subseteq A_{2/\delta}$.
    \item \textbf{$S$ solves $P = (P_R, P_M)$ up to finite dimensional cokernel:}

    For initial data $(F+M, G+N) = h \in H^s(C)$, we have 
    \[PSh = h - w_A \langle Z^A, h \rangle,\]
    where $w_A \in C^{\infty}_c \left(B; \C^{2}\right)$ for $B$ a patch of geometric length $1/\delta$ and $Z^A$ are the globally defined cokernel elements in Lemma~\ref{lem:cok_on_AC}.
    \item \textbf{Solution Operator Estimates:}
    We have solution operator bounds
    \begin{equation}
    \begin{aligned}
        ||S_{f}||_{\wH^{s} \to \wH^{s+2}} \lesssim \frac{1}{\delta^2}; \qquad
        ||S_{Y}||_{\wH^{s} \to \wH^{s+1}} \lesssim \frac{1}{\delta^4}. 
    \end{aligned}
    \end{equation}
    where the constant is uniformly $O(1)$ depending on $s$ and $d$.
    \item \textbf{Commutator Estimates:}
    We have the commutator bounds
    \begin{equation}
    \begin{aligned}
        \|[S_{f},\pdv_{z^i}]\|_{\wH^s \to \wH^{s+1}} \lesssim \frac{1}{\delta^2}; \qquad
        ||[S_{Y},\pdv_{z^i}]||_{H^{s}(C) \to H^{s+1}(C)} \lesssim \frac{1}{\delta^4}. \\
    \end{aligned}
    \end{equation}
\end{enumerate}
\end{proposition}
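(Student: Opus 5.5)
The plan is to build $S$ as a composition of three Bogovskii-type operators, following the three-stage outline in Step 3, and to track powers of $\delta$ by working on the unit cylinder $C$, where Proposition~\ref{prop:equiv_of_norms} turns $\wH^s(A)$ bounds into standard $H^s(C)$ bounds.

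\textbf{Augmented system.} First I will write down a completely integrable graded augmented system in the sense of Definition~\ref{def:aug}.
\begin{itemize}
\item For $P_R$ the augmented variables are $(f, \nabla f)$, and
\[
\nabla_i \nabla_j f = (P_R^* f)_{ij} + b_{ij} f .
\]
This closes with degrees $0,-1$, and the number of variables is $d+1$.
\item For $P_M$ the augmented variables are $(Y, \nabla_{[i}Y_{j]})$. The standard Killing identity
\[
\nabla_i\nabla_j Y_k = R_{kji}{}^l Y_l + (\text{derivatives of } P_M^*Y)
\]
closes the system, with $d(d+1)/2$ variables.
\end{itemize}
The coefficients $\Bf_i,\Cf_i$ come from $b$ and its curvature. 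Their raised angular components carry explicit factors of $\delta^{-1}$, so in $z$-coordinates on $C$ they are bounded polynomially in $\delta^{-1}$. This suffices for Proposition~\ref{prop:aug_sys_RC}, so the recovery on curves theorem (Theorem~\ref{thm:sol_op_5.15}) applies, and the kernel is exactly the span of the elements in Lemma~\ref{lem:cok_on_AC}.

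\textbf{Three operators.}
\begin{enumerate}
\item The first operator $S^1$ uses radial-then-short-angular curves of geometric length $O(1)$ from $y$ to a base point $y_1$ in the middle of the annulus, smoothed by $\eta_1$ via Theorem~\ref{thm:smth_ker_5.11} and Lemma~\ref{lem:b_eta_5.17}. It gives $PS^1 h = h - B_{\eta_1}h$. On $C$ the curves have unit length and the transported ODE has $O(1)$ coefficients in the radial variable, so $S^1$ gains the full order ($2$ for $S_f$, $1$ for $S_Y$) with constants depending only on the angular-index rescaling. This term should not be the source of the $\delta$ losses.
\item The second operator $S_{\eta_2}$ solves $Pu=B_{\eta_1}h$. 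Since $B_{\eta_1}$ is smoothing, regularity is free, and I only need to count the cost of integrating the augmented system along angular great-circle paths of length $O(1/\delta)$. I will bound the fundamental matrix of $\pdv_s\Phi=\Bf\Phi$ along these paths. Using the degree grading, after conjugating $\Phi_{\Af}$ by $\delta^{-d_\Af}$-type weights, the hyperbolic growth over length $1/\delta$ on the annulus of radius $\sim 1/\delta$ is only polynomial. I expect a loss $\delta^{-2}$ for $f$, coming from two integrations ($f$ and $\nabla f$). For $Y$ the loss should be $\delta^{-4}$: one power from each of the two levels of the Killing system, plus the extra $\delta^{-2}$ from the angular index weights in Definition~\ref{def:sobolev} (two raised angular indices for $\pi$ versus one for $Y$).
\item The third operator treats $B_{\eta_2}B_{\eta_1}h$, which is supported in two unit balls contained in a single patch $B$. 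A final application of the same theorem, now with a completely integrable system, yields the finite-dimensional cokernel projection $w_A\langle Z^A,h\rangle$ with $w_A\in C_c^\infty(B)$.
\end{enumerate}

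\textbf{Assembly and estimates.} Set $S = S^1 + S_{\eta_2} + S^3$. The support property follows because every curve and smoothing kernel stays inside $A_{2/\delta}$; the angular paths are chosen within the annulus, and the rotations used to reach $y_1$ preserve it. The commutator estimates follow from the same kernel representation. Differentiating $\pdv_{z^i}$ through the kernel $K(y,y')$ of each piece produces terms of the form $(\pdv_y+\pdv_{y'})K$, which are kernels of the same type with one fewer order of gain. These lose the same powers of $\delta$, since $\pdv_{z^i}$ is unit scale on $C$.

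The main obstacle is the second operator: showing that transport along the $O(1/\delta)$ angular paths costs exactly $\delta^{-2}$ and $\delta^{-4}$ rather than something exponential or worse. For this I will compute the transport matrix explicitly using the $SO(d,1)$ symmetry. The kernel elements $y^\mu$ restricted to $A$ have size $\lesssim 1/\delta$ with one-derivative size $O(1)$, so the fundamental solution can be written in closed form from the basis in Lemma~\ref{lem:cok_on_AC}, and its norm is read off directly.
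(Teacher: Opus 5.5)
There is a genuine gap, and it comes from the anisotropy of the norm. In $\wH^s$ (equivalently $H^s(C)$, by Proposition~\ref{prop:equiv_of_norms}) an angular derivative is measured at geometric scale $1/\delta$. By contrast, $S^1$ and $B_{\eta_1}$ are built from curves and a cutoff $\eta_1$ of geometric size $O(1)$, i.e.\ of angular size $\delta$ on $C$.

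Your claim that regularity for the second operator is ``free'' because $B_{\eta_1}$ is smoothing therefore fails in the norm that matters. $B_{\eta_1}$ does not damp $z$-angular frequencies between $1$ and $\delta^{-1}$, and the available bound is only $\|B_{\eta_1}h\|_{\wH^{s+k}}\lesssim\delta^{-k}\|h\|_{\wH^s}$. Suppose you control your second (and third) operator only through the size of its transport matrix; your counts $\delta^{-2}$ and $\delta^{-4}$ are plausible, e.g.\ from the boost structure of the jets of the kernel elements. If you then borrow the two, resp.\ one, derivatives from $B_{\eta_1}$, you end up with $\|S_f\|_{\wH^s\to\wH^{s+2}}\lesssim\delta^{-4}$ and $\|S_Y\|_{\wH^s\to\wH^{s+1}}\lesssim\delta^{-5}$. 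This is exactly the naive estimate the paper discards.

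The missing idea is that the large-scale operators must themselves gain angular regularity at the unit scale of $C$, with no loss in $\delta$. The paper proves this in Lemma~\ref{lem:fio_estimate}. Averaging against $\eta_2$, which has unit size on $C$, a transport kernel vanishing like $s^k$ at $s=0$ yields an operator that smooths by $k+1$ orders in the angular variables: three derivatives for the $Ar$ kernel of $S_f$, two for $AA$, one for the momentum operator. The radial gain comes from the unit-scale radial theory.

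Applying Theorem~\ref{thm:sol_op_5.15} directly on $C$ could in principle replace this. You would first have to verify its quantitative $A_S$ hypothesis (derivative bounds and the $s^{m_K+|\gamma|-1}$ vanishing) for your explicit transport with polynomial constants. The generic ODE bound there is exponential in $\delta^{-1}$, and your proposal does not address this.

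The same scale issue affects your treatment of $S^1$ and of the commutators. $S^1$ is not loss-free in $\wH$. The crude comparison $\delta^k\|h\|_{\wH^k}\lesssim\|h\|_{H^k}\lesssim\|h\|_{\wH^k}$ would cost $\delta^{-s-2}$. The paper reaches $\delta^{-2}$ for $S^1_f$ and $\delta^{-1}$ for $S^1_Y$ only by proving $[S^1,\pdv_\omega]=[B_{\eta_1},\pdv_\omega]=0$, using the $\SO(d)$-equivariance of the curve family and of $\eta_1$ (Lemma~\ref{lem:commuting_angular}). Even then only the $AA$ component of $S^1_f$ is $O(1)$; the $rr$ component loses $\delta^{-2}$.

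Likewise, your commutator argument via $(\pdv_y+\pdv_{y'})K$ assumes the kernel varies at unit scale on $C$. This is false for $S^1$, whose kernel costs $\delta^{-1}$ per angular derivative on $C$. There you need the exact rotation commutation. For the large-scale pieces, the paper instead moves the derivative onto $\eta_2$, as in Lemma~\ref{lem:averaging_sobolev_estimate}.
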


The first step in this proof will be to verify the recovery on curves condition $(RC)$ and to derive solution operator bounds. 
For this, we turn to the concept of graded augmented systems described in \ref{subsec:grded_sys}.

\subsection{Preliminaries for Augmented System Computations}\hfill

We take $\tensor {\Rf}{_i_j^k_l}u^l = (\nabla_i \nabla_j - \nabla_j \nabla_i)u^k$ for the Riemann curvature tensor, and recall that the sectional curvature $\kappa = -1$.

Our metric is $b = dr^2 + \sinh^2(r)g_{\Sb^{d-1}}$ and for the purposes of this section we will denote $g_{\Sb^{d-1}}$ simply by $g$. 
As before, we denote the radial component by $r$, and angular components by $A,B$.
Our Christoffel symbols are 
\[\tensor \Gamma{^A_r_B} = \coth(r) \delta^A_B; \qquad \tensor \Gamma{^r_A_B} = -\sinh(r)\cosh(r)g_{AB};\ \qquad \tensor \Gamma{^A_B_C} = \tensor \Gamma{^A_B_C}(g_{\Sb^{d-1}}). \]
with all other components vanishing. 
Let us choose normal coordinates on $\Sb^{d-1}$ to arrange so that all off-diagonal metric components $g_{AB}, A \neq B$ are zero, and that each $g_{AA}$ is constant along a path varying $A$.

Normalized angular forms are given by $d\tilde{\omega}^A = \sinh(r)d\omega^A$ and $\pdv_{\tilde{\omega}}^A = \sinh(r)^{-1}\pdv_{\omega}^A$.
We remark that we will use the \emph{standard} (non-normalized) angular forms, which are of the scale $O(1)$ in our setting.

\textbf{The Graded Augmented Systems}

Recall that our adjoint equation reads
\[P^*(f,Y)_{ij} = ((P^*_R f)_{ij}, (P^*_M Y)_{ij}) = (\nabla_i \nabla_j f - f b_{ij}, -\nabla_{(i}Y_{j)}).\]

Let us write $\alpha_j = \nabla_j f$, and write $\eta_{jk} = \frac{1}{2}(\nabla_j Y_k - \nabla_k Y_j)$.

Then our graded augmented system becomes
\begin{equation}\label{eqn:gr_aug_sys}
\begin{cases}
\nabla_i f = \alpha_i \\
\nabla_i \alpha_j = (P^*_R f)_{ij} + b_{ij} f \\
\nabla_i Y_j = \eta_{ij} - (P^*_M Y)_{ij} \\
\nabla_i \eta_{jk} = - \tensor \Rf{_j_k_i^l} Y_l + \nabla_k (P^*_M Y)_{ij} - \nabla_j (P^*_M Y)_{ki}.
\end{cases}
\end{equation}

To solve this augmented system and derive an operator bound, there are two paths one needs to consider.

First, fix $y, y_1$ in the annulus and let $x(y,y_1,s)$ be a geodesic segment from $y$ to $y_1$ with $x(s) = (s, \omega_0)$ for some fixed $\omega_0 \in \Sb^{d-1}$.

In this setting, the augmented systems become:
\begin{equation}\label{eqn:rad_aug_sys_f}
\begin{cases}
\frac{d}{dt} f(x(t)) = \alpha_r \\
\frac{d}{dt} \alpha_r(x(t)) = (P^*_Rf)_{rr} + f \\
\frac{d}{dt} \alpha_A(x(t)) = (P^*_R f)_{rA} + \coth(r) \alpha_A.
\end{cases}
\end{equation}
and
\begin{equation}\label{eqn:rad_aug_sys_Y}
\begin{cases}
\frac{d}{dt} Y_r(x(t)) = -(P^*_M Y)_{rr} \\
\frac{d}{dt} Y_A(x(t)) = \coth(r) Y_A + \eta_{rA} - (P^*_M Y)_{rA} \\
\frac{d}{dt} \eta_{rA}(x(t)) = Y_A + \nabla_A(P^*_M Y)_{rr} - \nabla_r (P^*_M Y)_{rA} + \coth(r)\eta_{rA}.
\end{cases}
\end{equation}

These settings are discussed in Appendices A.2.2 and A.4.2 of \cite{isett2025integral}, where they prove, by construction, that $||S||_{H^k(C) \to H^{k+1}(C)} = O(1)$ for arbitrary $k$; in other words, the solution operator is uniformly bounded irrespective of $\delta$.
One can prove this directly using the Abstract ODE estimate (Proposition 6.3) or Theorem~\ref{thm:sol_op_5.15} in that same paper.

Now let us consider an angular path, varying some angular parameter $A$, i.e. $x(t) = (r,(1-s)A(y)+s(A(y_1)), B')$, where $B'$ denotes all other angular components, taken to be fixed.

Let us note that the angular coordinate chosen is the geometric scale $O(1/\delta)$ coordinate $\Delta t \approx \omega^A \approx O(1)$.
In addition, let us fix $\Omega \coloneqq \Delta_A = A(y)-A(y_1)$, which contributes the $O(1)$ factor $\dot{x}(y,y_1,s)$ from the chain rule.
Our augmented system then becomes

\begin{equation}\label{eqn:ang_aug_sys}
\begin{cases}
\frac{d}{dt} f = -\Omega \alpha_A \\
\frac{d}{dt} \alpha_r = -\Omega((P^*_R f)_{Ar} +  \coth(r) \alpha_A) \\
\frac{d}{dt} \alpha_A = -\Omega((P^*_R f)_{AA} + \sinh^2(r)g_{AA} f - \sinh(r)\cosh(r) g_{AA} \alpha_r) \\
\frac{d}{dt} Y_r = -\Omega(\eta_{Ar} - (P^*_M Y)_{Ar} + \coth(r)Y_A) \\
\frac{d}{dt} Y_A = -\Omega(-(P^*_M Y)_{AA} - \sinh(r)\cosh(r) g_{AA} Y_r) \\
\frac{d}{dt} \eta_{Ar} =-\Omega( \tensor \Rf{_r_A_A^l} Y_l -  \nabla_A (P^*_M Y)_{rA} + \nabla_r (P^*_M Y)_{AA}).
\end{cases}
\end{equation}

We also have separate equations for the $B$ coordinates with no interaction with the rest of the augmented system:

\begin{equation}\label{eqn:B_aug_sys}
\begin{cases}
\frac{d}{dt} \alpha_B = -\Omega (P^*_R f)_{AB} \\
\frac{d}{dt} Y_B = -\Omega( \eta_{AB} -  (P^*_M Y)_{AB}) \\
\frac{d}{dt} \eta_{AB} = -\Omega(- \tensor \Rf{_B_A_A^l} Y_l +  \nabla_B (P^*_M Y)_{AA} -   \nabla_A (P^*_M Y)_{BA}).
\end{cases}
\end{equation}
Thus we have constructed the graded augmented system for our equations, hence the recovery on curves condition as in Proposition~\ref{prop:aug_sys_RC} is satisfied.
Moreover, our augmented system is completely integrable, as we have a full formal cokernel.

Attempting to apply the ODE estimates in \cite{isett2025integral} would result in an exponential bound in $\delta$, but we can improve on this substantially by studying the system directly. 
Instead, our first solution operator will be designed to handle all desired regularity gain in the radial direction, obtaining optimal improvement in terms of factors of $\delta$.
Here already, we can use the existing theory and exploit angular symmetry.
However, to deal with the full anisotropic setting, we will also need to handle the angular components, which will be done by integrating the augmented system directly.

\begin{figure}[htbp]
\centering
\begin{tikzpicture}[
    scale=0.7,
    line cap=round,
    line join=round,
    bdry/.style={black,line width=0.9pt},
    traj/.style={green!60!black,line width=1.2pt},
    supp/.style={dashed,line width=.9pt},
    patchB/.style={supp,draw=blue,fill=blue!8},
    patchG/.style={supp,draw=green!60!black,fill=green!60!black!8},
    every node/.style={font=\small}
]
\def\R{2.4}
\def\r{1.6}
\def\Rm{2.0}          
\def\Rs{1.8}          
\def\pin{1.72}        
\def\pout{2.28}       
\newcommand{\annpatch}[3]{%
  \filldraw[#1] (#2:\pin) arc (#2:#3:\pin) -- (#3:\pout)
                arc (#3:#2:\pout) -- cycle;}
\begin{scope}[scale=1.15,yshift=-0.8cm]
\node at (0,3.2) {\Large First Operator};
\draw[bdry] (-2.25,.82) arc (160:20:\R);
\draw[bdry] (-1.50,.55) arc (160:20:\r);
\coordinate (c) at ({\Rm*cos(38)},{\Rm*sin(38)});
\node[above left=2pt] at (c) {$c(y)$};
\coordinate (y) at ({2.25*cos(40)},{2.25*sin(40)});
\fill (y) circle (1.6pt);
\node[right=2pt] at (y) {$y$};
\coordinate (yone) at ({\Rm*cos(30)},{\Rm*sin(30)});
\fill (yone) circle (1.6pt);
\node[below right] at (yone) {$y_1$};
\filldraw[patchB] (c) circle (.35);
\draw[traj] (y)--(c) arc (38:30:\Rm);
\end{scope}
\begin{scope}[xshift=7.5cm,scale=1.15,yshift=-0.8cm]
\node at (0,3.2) {\Large Second Operator};
\draw[bdry] (-\R,0) arc (180:0:\R);
\draw[bdry] (-\r,0) arc (180:0:\r);
\draw[bdry] (-\R,0)--(-\r,0);
\draw[bdry] (\R,0)--(\r,0);
\annpatch{patchB}{73}{107}
\draw[traj]
  ({\Rm*cos(165)},{\Rm*sin(165)})
  -- ({\Rs*cos(165)},{\Rs*sin(165)});
\draw[traj] ({\Rs*cos(165)},{\Rs*sin(165)}) arc (165:90:\Rs);
\draw[traj] (0,\Rm) arc (90:15:\Rm);
\coordinate (yl) at ({\Rm*cos(165)},{\Rm*sin(165)});
\fill (yl) circle (1.5pt);
\node[left=-1.5pt] at (yl) {$y$};
\coordinate (yr) at ({\Rm*cos(15)},{\Rm*sin(15)});
\fill (yr) circle (1.5pt);
\node[right=-1.5pt] at (yr) {$y$};
\coordinate (top) at (0,\Rm);
\fill (top) circle (1.5pt);
\coordinate (top2) at (0,\Rs);
\fill (top2) circle (1.5pt);
\node[above] at (0,{\pout+0.06}) {$y_1$};
\end{scope}
\begin{scope}[xshift=15cm]
\node at (0,3.2) {\Large Third Operator};
\draw[bdry] (0,\R) arc (90:-90:\R);
\draw[bdry] (0,\r) arc (90:-90:\r);
\draw[bdry] ({\R*cos(105)},{\R*sin(105)}) arc (105:-105:\R);
\draw[bdry] ({\r*cos(105)},{\r*sin(105)}) arc (105:-105:\r);
\draw[bdry] ({\R*cos(105)},{\R*sin(105)}) -- ({\r*cos(105)},{\r*sin(105)});
\draw[bdry] ({\R*cos(-105)},{\R*sin(-105)}) -- ({\r*cos(-105)},{\r*sin(-105)});
\annpatch{patchB}{76}{104}
\annpatch{patchB}{-104}{-76}
\annpatch{patchG}{-15}{15}
\node[blue,below] at (0,{\R-0.75}) {$\operatorname{supp}(\eta_2)$};
\node[blue,below] at (0,{-\R-0.05}) {$\operatorname{supp}(\eta_2)$};
\coordinate (yy) at (.02,-\Rm);
\draw[traj] (yy) arc (-90:0:\Rm);
\fill (yy) circle (1.5pt);
\node[left=3pt] at (yy) {$y$};
\coordinate (G) at (\Rm,0);
\fill (G) circle (1.5pt);
\node[right] at ({\R+0.05},0) {$y_1$};
\end{scope}
\end{tikzpicture}
\caption{The three-step Bogovskii-type operator construction. The blue region on the left is the support of $\eta_1$ for each $y$; the blue patches in the middle and on the right are the support of $\eta_2$ for all $y$. The green characteristic curves travel from $y$ to $y_1$.}
\label{fig:coarse-sets}
\end{figure}

\textbf{The First Operator: Source Smoothing}

For the first operator, we will run estimates in the standard Sobolev space $H^k$, and then transport these results into the anisotropic setting with a concrete equivalence of norms.
We define the following construction on $A_{2/\delta}$.
For $y = (r,\omega)$, let $c(y) = \left(\frac{1}{2}(\log(2/\delta) + \log(8/\delta)), \omega \right)$ be the corresponding point with the same angular component but radial component purely in the center of the annulus. 
Let $\eta_1 = \eta_1(y,y_1)$ be a smooth function on $B_1(c(y))$ such that $\int \eta_1(y,y_1)\dd y_1 = 1$.
We then define the straight-line path 
\[x(y,y_1,s) = (1-s)y + s(y_1).\]
Thus, for each $y$ we define paths from $y$ to a small unit-sized ball localized around the center of the annulus. 

In light of the $y$-dependence of $\eta_1$, we wish to apply Lemma~\ref{lem:b_eta_5.17} and Theorem~\ref{thm:sol_op_existence}.
To do this, we observe that for $U = A_{2/\delta}$, we have the $x$-star-shaped condition:
\[\overline{\bigcup_{y \in \overline{U}, (y,y_1) \in \supp \eta_1} x(y,y_1,[0,1])} \subseteq \overline{U}. \]
and the formal cokernel condition $\dim \ker P^* = \# \Acal$.
Thus, we do indeed have a solution operator up to a smooth error term $B_{\eta_1}$.
To obtain quantitative estimates, we can verify the bounds for $A_{\eta},A_x$, and $A_S$.

Recall that $A_x$ is defined so that 
\[(1+A_x)^{-1}|y_1-y| \leq |\pdv_s x(y,y_1,s)| \leq (1+A_x)|y_1-y|,\]
and as $x$ is the straight-line path of unit-length here, $A_x = O(1)$.
Moreover, observe that as for each $y$, the function $\eta_1(y,y_1)$ is of unit scale, and $L_y \sim |y_1-y| \sim O(1)$ and $y,y_1$ derivatives correspond to $x',x_1'$ derivatives on the thin unit annulus, so a unit-scale change in $y,y_1$ corresponds to a size $\delta$ angular change. 
Hence 
\begin{equation*}
	|L_{y}^{|{\alpha}|} |y_{1}-y|^{|\beta|} (\pdv_{y} + \pdv_{y_{1}})^{\alpha} \pdv_{y_{1}}^{\beta}\eta(y, y_{1})| \leq A_{\eta} L_{\eta}^{-d} \hbox{ for } |\alpha| + |\beta| \leq M_{\eta}.
\end{equation*}
and
\begin{align*}
	|L_{y}^{|\alpha|} L_{b}^{|\beta|} (\pdv_{y} + \pdv_{y_{1}})^{\alpha} \pdv_{y_{1}}^{\beta} \eta(y, y_{1})| \leq A_{\eta} L_{\eta}^{-d} \quad \hbox{ for } |\alpha| + |\beta| \leq M_{\eta}'.
\end{align*}
With constants $M_{\eta},M_{\eta}' \in \N$ and with $A_{\eta} = O(1)$.

Finally we need to check the condition on $A_S$:
\[|L_y^{|\alpha|}|y_1-y|^{|\beta|}(\pdv_y + \pdv_{y_1})^{\alpha} \pdv_{y_1}^{\beta} S_J^{\gamma,K}(y,y_1,s) | \leq A_S |y_1-y|^{m_K + |\gamma|}s^{m_K + |\gamma|-1} \]
In the first solution operator regime, $A_S$ is indeed again uniformly bounded independent of $\delta$, as $L_y \sim |y_1-y| \sim O(1)$ and $y,y_1$ derivatives correspond to $x',x_1'$ derivatives on the thin unit annulus.
Moreover, the union of all line segments is strictly contained inside the annulus $A_{2/\delta}$.

Hence, the conditions of Theorem~\ref{thm:smth_ker_5.11} apply and we obtain a solution operator bound
\[||S^1_{f} h||_{H^{s+2}(A)} \lesssim ||h||_{H^s(A)}; \qquad \hbox{and} \qquad ||S^1_{Y} h||_{H^{s+1}(A)} \lesssim ||h||_{H^s(A)}\]
for any $s > 0$ and $h \in H^s(A)$ using the standard theory of pseudodifferential operators for the Hamiltonian constraint equation and momentum constraint equations respectively.
for the momentum constraint equation.
Here, we made the observation that these Sobolev space estimates actually hold on the Euclidean annular approximation.

Moreover, by the discussion preceding Lemma~\ref{lem:b_eta_5.17}, the $O(1)$ bounds for $A_g, A_Z$ hold by \cite[Proposition 6.6]{isett2025integral}.

Hence we can also apply Lemma~\ref{lem:b_eta_5.17} to obtain bounds 
\[||B_{\eta_1, f} h||_{H^{s+2}(A)} \lesssim ||h||_{H^s(A)} \quad \hbox{ and } \quad ||B_{\eta_1, f} h||_{H^{s+1}(A)} \lesssim ||h||_{H^s(A)}\]
respectively. 
Note that at this step everything is $O(1)$ as one would expect, given that our construction is completely unit-scale and are estimates are in the standard Sobolev space.

To finish the discussion of the first operator one needs to convert these estimates in $H^k(A)$ to ones on $\wH^k(A)$.
Observe that by definition,
\[\delta^k \|h\|_{\wH^k} \lesssim \|h\|_{H^k} \lesssim \|h\|_{\wH^k},\]
so in particular the above bounds would immediately give an $O(\delta^{-s-2})$ bound for $S^1$ and $B_{\eta_1}$, but these are far from optimal.
A sharper bound can be shown from the following lemma:

\begin{lemma}\label{lem:commuting_angular}
Let $S^1 = (S^1_{f},S^1_Y)$ and $B_{\eta_1} = (B_{\eta_1,f},B_{\eta_1,Y})$ as constructed above.
Then for any $\omega \in \Sb^{d-1}$, we have 
\[[S^1, \pdv_{\omega}] = [B_{\eta_1}, \pdv_{\omega}] = 0.\]

Consequently, w have the commutator bounds
    \begin{equation}
    \begin{aligned}
        ||[S^1_{f},\pdv_{x^i}]||_{\wH^{s} \to \wH^{s+1}} = O(1), \qquad ||[S^1_{Y},\pdv_{x^i}]||_{\wH^s \to \wH^s} = O(1). \\
    \end{aligned}
    \end{equation}
\end{lemma}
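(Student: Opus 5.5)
The plan is to prove the commutation identity first and deduce the anisotropic bounds from it. The first operator is built equivariantly in the angular variable. For each $y=(r,\omega)$, the center $c(y)$ differs from $y$ only in the radial coordinate. We are free to choose $\eta_1(y,y_1)$ as a fixed unit-scale profile transported along the angular variable, i.e.\ $\eta_1(y,y_1)=\eta_1^0(r, r_1, \omega_1-\omega)$ in local angular coordinates (equivalently, via the rotation action of $\SO(d)$ on $A_{2/\delta}$). The straight-line paths $x(y,y_1,s)=(1-s)y+sy_1$ are then also equivariant. So for a rotation $R$ we have $x(Ry,Ry_1,s)=Rx(y,y_1,s)$ and $\eta_1(Ry,Ry_1)=\eta_1(y,y_1)$.

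The second ingredient is the invariance of the augmented system \eqref{eqn:gr_aug_sys} under rotations. The metric $b=dr^2+\sinh^2(r)g_{\Sb^{d-1}}$ is $\SO(d)$-invariant, and so are $P^*$, the curvature, and the coefficient matrices $\Bf_i,\Cf_i$, as is visible from \eqref{eqn:rad_aug_sys_f}--\eqref{eqn:rad_aug_sys_Y}. The coefficients there depend only on $r$. Hence the kernel $K(y,y_1)$ of $S^1$ produced by Theorem~\ref{thm:smth_ker_5.11}, and that of $B_{\eta_1}$ from Lemma~\ref{lem:b_eta_5.17}, satisfy $K(Ry,Ry_1)=R_\ast K(y,y_1)$. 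Here $R_*$ is the natural tensorial action.

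Differentiating this identity along a one-parameter subgroup $e^{t\Omega}$, with $\Omega\in\so(d)$, gives $[S^1,\Lcal_{\Omega}]=[B_{\eta_1},\Lcal_\Omega]=0$. The rotation vector fields span the angular tangent directions $\pdv_\omega$ at each point. This is what is meant by $[S^1,\pdv_\omega]=0$ once angular derivatives are interpreted as Lie derivatives along Killing fields, which act on tensors compatibly with the kernel. I expect the subtlety here to be that local coordinate $\pdv_\omega$ is not globally a Killing field. I would handle this by working with the Killing generators, and note that on each unit-scale patch the difference is a zeroth-order term with $O(1)$ coefficients in the $z$-coordinates of $C$.

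For the commutator bounds, I pass to the Euclidean approximation $A$ and the cylinder $C$ via Proposition~\ref{prop:equiv_of_norms}. The field $\pdv_{x^i}$ decomposes into a radial part $\pdv_{x_d}$ and an angular part $\delta^{-1}\pdv_{z'}$ (up to smooth $O(1)$ coefficients). The angular part is a combination of the Killing fields above, with coefficients whose derivatives are $O(1)$ in $z$.

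Using the vanishing commutator, $[S^1,\pdv_{x^i}]$ reduces to terms of the form $S^1\circ(\text{order-}0\text{ coefficient derivatives})$ plus the radial commutator. The radial commutator is a unit-scale Calderón--Zygmund/pseudodifferential commutator. By the standard theory behind Theorem~\ref{thm:smth_ker_5.11}, it gains the same order as $S^1$ minus one, with $O(1)$ constants since $A_x,A_\eta,A_S=O(1)$. This yields $\wH^s\to\wH^{s+1}$ for $S^1_f$ and $\wH^s\to\wH^s$ for $S^1_Y$.

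The main obstacle is ensuring that no $\delta^{-1}$ is generated by the angular part, which is exactly where the vanishing commutator is used. The coefficient terms must be checked to be $O(1)$ in $C$-coordinates, and I would verify this by direct computation from the map $\Phi$ in \eqref{eqn:stereo_proj}.
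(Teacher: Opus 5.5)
Your argument for the identity $[S^1,\pdv_\omega]=[B_{\eta_1},\pdv_\omega]=0$ is the paper's. $\SO(d)$-equivariance of the straight-line paths and of $\eta_1$ gives equivariance of the kernels, and differentiating along one-parameter subgroups gives commutation with the rotation generators. Your version is more careful than the paper's. The paper writes $K(Ry,Ry_1)=K(y,y_1)$ without the tensorial action and concludes that $S^1$ commutes with ``all angular vector fields''. In fact only the Killing generators, acting by Lie derivative, commute; a coordinate field $\pdv_\omega$ commutes only up to the coefficient commutators you point out. The paper offers nothing for the commutator bounds beyond the word ``consequently''. Your derivation of those bounds is where your argument goes beyond the paper, and for $S^1_f$ it does not close.

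Write $\pdv_{x^i}=\frac{x^i}{|x|}\pdv_\rho+|x|^{-2}\sum_j x^jX_{ji}$ with $X_{ji}=x^j\pdv_{x^i}-x^i\pdv_{x^j}$.

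\emph{The harmless terms.} $[S^1_f,\frac{x^i}{|x|}]\pdv_\rho$ and $[S^1_f,\frac{x^j}{|x|^2}]X_{ji}$ have coefficients varying on scale $1/\delta$. So they are order $-2$ operators of size $O(\delta)$ at unit scale.

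\emph{The radial term.} $\frac{x^i}{|x|}[S^1_f,\pdv_\rho]$ is an order $-2$ operator of size $O(1)$ at unit scale on $A$, and a unit-scale gain does not pass to $\wH$ for free. Since $[S^1_f,\pdv_\rho]$ also commutes with rotations, the $s$ angular derivatives already present go through without loss. But the gained derivative is gained at unit scale on $A$, i.e.\ at scale $\delta$ on $C$, whereas in $\wH^{s+1}$ an extra angular derivative is $\delta^{-1}\pdv_{x'}$. This is the same $\delta^{-1}$-per-gained-derivative cost the paper records right after the lemma, $\|S^1_f\|_{\wH^s\to\wH^{s+2}}\lesssim\delta^{-2}$. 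It is also why the two harmless terms end up $O(1)$ rather than $O(\delta)$. Testing on $h$ oscillating at unit angular frequency on $A$ gives $\|[S^1_f,\pdv_\rho]h\|_{\wH^{s+1}}\approx\delta^{-1}|\pdv_\rho a|\,\|h\|_{\wH^s}$, with $a$ the symbol of $S^1_f$. Nothing in the construction makes $\pdv_\rho a$ of size $O(\delta)$: $c(y)$ sits at a fixed radius and the coefficients are $r$-dependent.

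So your route proves $\|[S^1_Y,\pdv_{x^i}]\|_{\wH^s\to\wH^s}=O(1)$, where no derivative is gained. For $[S^1_f,\pdv_{x^i}]\colon\wH^s\to\wH^{s+1}$ it gives only $O(\delta^{-1})$. Your ``main obstacle'' paragraph looks for the dangerous $\delta^{-1}$ in the angular part of $\pdv_{x^i}$, but it comes from the radial part. Reaching $O(1)$ would need an additional structural fact, for instance that $[S^1_f,\pdv_\rho]$ is $O(\delta)$ at unit frequencies. Neither your sketch nor the paper supplies this. The weaker $O(\delta^{-1})$ is still compatible with the $\delta^{-2}$ commutator bound used in Proposition~\ref{prop:soln_op_bounds}.
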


\begin{proof}[Proof of Lemma~\ref{lem:commuting_angular}]
Fix $R = e^{tA} \in \SO(d)$ a rotation matrix with $A \in \so(d)$, and for $f \in H^k$ define $Rf(x) = f(R^{-1}x)$.
Observe that our straight-line paths and $\eta$ satisfy
\[x(Ry,Ry_1,s) = Rx(y,y_1,s); \qquad \eta_1(Ry,Ry_1) = \eta(y,y_1).\]
Hence, by construction, if our integral kernel is of the form $S^1h = \int_{A_{2/\delta}} K(y,y_1)h(y_1)\dd y_1$, then we have $K(Ry,Ry_1) = K(y,y_1)$, so $K$ is $\SO(d)$ equivariant.
Using $\SO(d)$ equivariance and a change of variables, it is easy to see that $S^1Rh(y) = RS^1h(y)$.
Moreover, convergence in the strong operator topology yields
\[R'(0)h = \lim_{t \to 0} \left( \frac{R(t)-R(0)}{t}h \right) = \lim_{t \to 0} \frac{1}{t}\left(f(r,e^{-tA}\omega)-f(r,\omega)  \right) = (-\omega \cdot A) \nabla_{\omega} f.\]
and $S^1R'(0) = R'(0)S$.
Hence $S^1$ commutes with all angular vector fields as desired.
Exactly the same argument holds for $B_{\eta_1}$ as well.
An important consequence of this proof is that that the same bound holds for all components, because angular symmetry applies to each one individually.

Note that this construction is standard and can be found in, e.g. \cite{1972222}.
\end{proof}

As a result of Lemma~\ref{lem:commuting_angular}, we have
\begin{align*}
\|Sh\|_{\wH^{k+k'}} = \|\delta^{-k}\pdv^{\alpha}_x S h \|_{\wH^{k'}} &\lesssim \delta^{-k'} \| \delta^{-k}S \pdv^{\alpha}_x h \|_{H^{k'}} \\
&\lesssim \delta^{-k'} \|h\|_{\wH^k}.
\end{align*}

Hence we conclude the first solution operator bounds
\[||S^1_{f} h||_{\wH^{s+2}(A)} \lesssim \delta^{-2} ||h||_{\wH^s(A)}; \qquad \qquad ||S^1_{Y} h||_{\wH^{s+1}(A)} \lesssim \delta^{-1}||h||_{\wH^s(A)}\]
\[||B_{\eta_1, f} h||_{\wH^{s+2}(A)} \lesssim \delta^{-2}||h||_{H^s(A)}; \quad \quad ||B_{\eta_1, Y} h||_{\wH^{s+1}(A)} \lesssim \delta^{-1}||h||_{\wH^s(A)}.\]

Note that this bound of $\delta^{-2}$ for the Hamiltonian constraint is not uniform across all components of the operator; indeed, the conversion from $H^s$ to $\wH^s$ also contributes a factor of $\delta$ for every raised angular component in the solution operator.
It will be of particular importance in Step $4$ that $\|(S^1_f h)^{AA}\|_{\wH^{s+2}} \lesssim \|h\|_{\wH^s}$, while $\|(S^1_f h)^{rr}\|_{\wH^{s+2}} \lesssim 
\delta^{-2}\|h\|_{\wH^s}$.

\textbf{The Second Operator: Large-Scale Averaging}
We have succeeded in applying the Theorem to construct $PS_1h = h - B_{\eta_1}h$, where $S_1$ satisfies the bounds above and $B_{\eta_1}$ is smoothing. 
Here, ``$h$" refers to the nonlinearity on the right-hand side of Equation~\eqref{eqn:constraints}.

When constructing the bound on the first solution operator, all paths were unit length so we could get away with taking the stereographic projection of one hemisphere and obtaining solution operator estimates there. 
For the second operator, we wish to consider paths covering the full length of the annulus. 
For this purpose, we choose $\eta_2^1$ and $\eta_2^2$, purely functions of $y_1$, to live in a large $O(1/\delta)$-sized region at the north and south poles of the hyperbolic annulus. 

Then, depending on whether the point $y$ has positive or negative $y^d$ component, construct a path of fixed radius along the sphere to each point in $\supp \eta_{2}^1$ or $\supp \eta_2^2$, and average over them.
We will write $\eta_2 = \eta_2^1 + \eta_2^2$.

Note that $\eta_2$ is not dependent on $y$ in this construction, our augmented system is completely integrable, and all of the desired bounds for the smoothness condition $(C^{\infty})$ will still hold but with $\delta$-dependent bounds. 
Hence we can apply Theorem~\ref{thm:sol_op_5.15} to conclude the existence of a solution operator $S^2 = (S^2_f, S^2_Y)$ such that for any source $h \in \wH^s(A_{2/\delta})$ we have
\[PS^2B_{\eta_1}h = B_{\eta_1}h - B_{\eta_2}B_{\eta_1}h,\]
where $B_{\eta_2} = \sum_{\Af \in \Acal} (\zeta_{\eta})_{\Af} \langle \Zf^{\Af},\cdot \rangle$
with $(\zeta_{\eta})_{\Af}$ a smooth function such that $\supp(\zeta_{\eta})_{\Af} \subseteq \supp \eta_2$
and $\{\Zf^\Af\}_{\Af \in \Acal}$ a basis of $\ker P^*$ and $\zeta_{\Af}$.

However, we note that the bounds themselves arising from this theorem would be $\delta$-dependent and suboptimal, since our paths are no longer unit length.
Instead, we estimate the solution operator directly by estimating derivatives of the solution operator formula in $L^2$.

Note that the radial case was already considered, so it will suffice to look at purely angular paths to derive a uniform bound.
Our strategy will be to look at one fixed angular path for same-level estimates, but for improved regularity estimates we will need the fact that rotations are isometries which preserve the solution operator structure.

\subsection{Explicit $\delta$-Bound Computations}
For convenience, in this section we drop dependence of smoothing operators on $f$ and $Y$ until the estimates.
Let us first consider the \textbf{Hamiltonian Constraint}:
Reducing the equations for one in $\alpha_A$ yields
\[\frac{d^2}{dt^2}\alpha_A + \Omega^2 g_{AA} \alpha_A = - \Omega \frac{d}{dt} (P^*_R f)_{AA} - \Omega^2 \sinh(r) \cosh(r) g_{AA} (P^*_R f)_{Ar}. \]
The source terms on the right are of the order $O\left(\frac{1}{\delta^2}\right)$ due to the factor of $\sinh(r)\cosh(r)$.

On the other hand, we have an oscillatory solution with frequency $\omega_R = \Omega\sqrt{g_{AA}} \sim O(1)$. 
Our particular solution is given by 
\[\int_t^{1} \frac{\sin(\omega_R (t-s))}{\omega_R} \left(-\Omega \frac{d}{ds} (P^*_R f)_{AA} - \Omega^2 \sinh(r) \cosh(r) g_{AA} (P^*_R f)_{Ar}\right)(x(y,y_1,s)) \dd s,\]
whereas the homogeneous solution is given by 
\[\cos(\omega_R(t-1))\alpha_A(y_1) + \frac{\sin(\omega_R(t-1))}{\omega_R}\frac{d}{ds}\alpha_A \bigg\vert_{\alpha = y_1}.\]
Of course, this is just the equation for $\alpha_A$, and to recover an equation for the true adjoint variable $f$ we need to integrate again.
Denoting the source term by $\Ncal((P^*_Rf)_{AA}, (P^*_Rf)_{Ar})$, we have
\begin{align*}
f(y) &= f(y_1) - \Omega \frac{\sin(\omega_R)}{\omega_R}\alpha_A(y_1) + \Omega \frac{1-\cos(\omega_R)}{\omega_R^2}\alpha'_A(y_1) \\
&+ \int_0^1 \int_t^{1} \frac{\sin(\omega_R (t-s))}{\omega_R} \Ncal((P^*_Rf)_{AA}, (P^*_Rf)_{Ar})(x(y,y_1,s)) \dd s \dd t
\end{align*}
By interchanging the two integrals over the source in the final term, we obtain
\[\int_0^{1} \left( \frac{ \cos(\omega_R (s))-1}{\omega_R^2} \right)\Ncal((P^*_Rf)_{AA}, (P^*_Rf)_{Ar})(x(y,y_1,s)) \dd s.\]
Hence we have verified the recovery on curves for the Hamiltonian constraint equation. 
We will denote this second term by $S^*_f(P^*_Rf)$, and the expression $K(s,y,y_1) \coloneqq \left( \frac{ \cos(\omega_R (s))-1}{\omega_R^2} \right)$ for convenience. 
Note that the $y$ and $y_1$ dependence is implicit in the construction of $\omega_R$, but it is present. 

Recall that the path is given by $x(y,y_1,s) = (r,(1-s)A(y)+sA(y_1),B')$, where $A(y),A(y_1)$ denote the $A$ components of $y$ and $y_1$, respectively, and $B'$ denotes all other fixed angular components.
We will think about the path as taking the radial component of $y$ to that of $y_1$, and then taking a straight angular path to $y_1$; this need not stay in the direction $A$, but it will be straight angular path in a fixed coordinate up to some rotation.

Introducing a change of variables $w = (r,(1-s)A(y)+sA(y_1),B')$ and pairing with our smooth source $B_{\eta_1}h$ from the first solution operator, we obtain
\[\langle S^*_f(P^*_Rf), B_{\eta_1,f}h \rangle = \int_{A_{2/\delta}} \int_0^1 K(s,w,y_1) \Ncal(w) B_{\eta_1}h\left(r, \frac{(A(w) - sA(y_1))}{1-s},B'\right) \frac{1}{1-s}  \dd s \dd w. \]
Hence, our unsmoothed operator $S$ acts on the smooth source $B_{\eta_1}h$ with components
\begin{align*}
(S B_{\eta_1}h(w))^{Ar} &= \int_0^1 (\Omega^2 \sinh(r)\cosh(r)K(s,w,y_1)) B_{\eta_1}h \left(r, \frac{A(w) - sA(y_1)}{1-s}, B'\right) \frac{1}{1-s}  \dd s,  \\
(S B_{\eta_1}h(w))^{AA} &= \int_0^1 (\Omega \pdv_{s} K(s,w,y_1)) B_{\eta_1}h \left(r, \frac{A(w) - sA(y_1)}{1-s}, B'\right) \frac{1}{1-s}  \dd s  \\
&- \delta_{w} \int \Omega \left(\frac{\cos(\sqrt{g_{AA}}(A(y)-A(w)))-1}{g_{AA}(A(y)-A(w))^2}\right) B_{\eta_1}h(y) \dd y.
\end{align*}
where we fix a particular $y_1$.

Note that the $s=1$ boundary term is non-negligible, arising from integration by parts before we paired with the source.
Nevertheless, the component $\frac{\cos(\sqrt{g_{AA}}(A(y)-A(w)))-1}{g_{AA}(A(y)-A(w))^2}$ and its $\pdv_w$ derivatives are bounded uniformly $O(1)$, so it is infinitely smoothing even before adding the smooth $\eta_2$; hence we can safely ignore this term.

For the purposes of an $L^2$-estimate, the smoothing is not necessary, as it is $O(1)$.
Note also that we take an $s$ derivative of $K(s,w,y_1)$ here; we note that $\pdv_{A(w)} = -\frac{1}{\Omega} \pdv_s$ on $B_{\eta_1}h$ but this is in general not true on the kernel $K$.

Let us ignore the $AA$ component for now as $K'(s)$ is $O(1)$, and taking derivatives would introduce factors of $1/(1-s)$ which are no longer integrable.
Taking the $L^2$ norm over the entire annulus $A_{2/\delta}$ and observing that $\Omega^2 \sim O(1)$, we get
\begin{align*}
\|(SB_{\eta_1}h)^{Ar}\|_{L^2(y)} = \|(SB_{\eta_1}h)^{Ar}\|_{L^2(w)} &\lesssim \int_0^{1} (\sinh(r)\cosh(r)K(s,w,y_1)) \left\|B_{\eta_1}h(\cdot) \frac{1}{1-s}\right\|_{L^2(w)}  \dd s \\
&\lesssim \int_0^{1} ( \sinh(r)\cosh(r)K(s,w,y_1)) \left\|B_{\eta_1}h(\cdot) \frac{1}{\sqrt{1-s}}\right\|_{L^2(y)}  \dd s \\
&\lesssim \frac{1}{\delta^2}\|B_{\eta_1}h\|_{L^2(y)}, 
\end{align*}
where in the last step we used that $\sinh(r)\cosh(r) \sim O(1/\delta^2)$

Alternatively, peeling off one of the factors of $\Omega$ from $K(s,w,y_1)$ and using the substitution $-t = \frac{s}{1-s}((A(w)-A(y_1))$ removes the problematic factor of $\frac{1}{1-s}$; this, combined with the support property of $B_{\eta_1}h$, would conclude the $L^2$ bound.
This would also give an easy proof of a same-level, higher derivative estimate; derivatives fall either on $K$ or on $B_{\eta_1}$ by Leibniz rule, and with respect to this substitution, $w$ and $y$ derivatives are indistinguishable.

For completeness, for higher derivative estimate, we formulate the following lemma which uses a clever trick:

\begin{lemma}\label{lem:averaging_sobolev_estimate}
Let $\eta(y_1)$ be a smooth averaging function such that $\int \eta(y_1) \dd y_1 = 1$, $\supp(\eta) \subset U \subset A_{2/\delta}$ where $U$ is a set of geometric length $1/\delta$, and $\|\eta\|_{\wH^s} \sim O(1)$ for all $s > 0$.
Suppose we have a solution operator with components of the form
\[(SB_{\eta_1}h)^{ij} = \int \eta(y_1) \int_0^1 P(K^{(n)}(s,w,y_1))^{ij}B_{\eta_1}h\left(r, \frac{A(w)-sA(y_1)}{1-s},B'\right)\frac{1}{1-s} \dd s \dd y_1,\]
where $P(K^{(n)}(s,w,y_1))$ is a polynomial of $K(s,w,y_1)$ and its derivatives. 
Then we have an estimate of the form 
\[\|SB_{\eta_1}h\|_{\wH^s} \lesssim \delta^\alpha\|B_{\eta_1}h\|_{\wH^s},\]
with $\alpha$ given by the maximum over unit scale components $ij$ of the size of $P(K^{(n)}(s))$.
\end{lemma}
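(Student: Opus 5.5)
The plan is to remove the singular factor $\frac{1}{1-s}$ by a change of variables in the source argument. Then I will check that every derivative in the anisotropic norm $\wH^s$ can be moved off the kernel and onto the smooth source $B_{\eta_1}h$ at unit cost. Throughout, I work on the unit cylinder $C$ via Proposition~\ref{prop:equiv_of_norms}, so that $\wH^s(A)$ norms become standard $H^s(C)$ norms and the $O(1/\delta)$ angular scale becomes unit scale.

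\textbf{Step 1: remove the singularity.} For fixed $y_1$ in $\supp \eta$, I substitute $u = \frac{A(w)-sA(y_1)}{1-s}$, so that the source is evaluated at $(r,u,B')$. Equivalently, I use the substitution $-t = \frac{s}{1-s}(A(w)-A(y_1))$ indicated in the text, after peeling one factor of $\Omega = A(w)-A(y_1)$ off $K$. This is the usual Bogovskii trick: the operator becomes an average over dilations centred at $A(y_1)$ of the form
\[
\int \eta(y_1)\int \widetilde P(s,w,y_1)\, B_{\eta_1}h\big(r, A(y_1)+\lambda (A(w)-A(y_1)),B'\big)\, \dd\mu(\lambda)\,\dd y_1 ,
\]
where $\widetilde P$ collects $P(K^{(n)})$ together with the Jacobian. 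The only singular behaviour occurs as $s\to1$, i.e.\ $\lambda\to\infty$. There the source is evaluated far from $w$ and vanishes outside the bounded annulus, by the compact support of $B_{\eta_1}h$. Hence the $\lambda$-integral runs over a bounded range, and Minkowski's inequality with $L^2$ dilation bounds gives the $L^2$ estimate with constant $\sup|P(K^{(n)})|$ over each component $ij$. The unit-scale components carry the weights from \eqref{eqn:tensor_norm}, and this is exactly the exponent $\delta^\alpha$ in the statement.

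\textbf{Step 2: higher derivatives via the commutation trick.} Write $\pdv_w$ for the angular derivative. It does not commute with the dilation structure directly, since it produces factors of $\lambda$. The trick is to note that $(\pdv_w+\pdv_{y_1})$ annihilates the source argument's dependence on $A(w)-A(y_1)$ up to the translation part. Since $\eta$ depends only on $y_1$, I integrate by parts in $y_1$ to move $\pdv_{y_1}$ onto $\eta$ and onto the kernel. Derivatives of $\eta$ are $O(1)$ in $\wH^s$ by hypothesis, and derivatives of $K$ in $w$ or $y_1$ remain bounded, because $K$ is a smooth function of $\Omega$ with $O(1)$ frequency $\omega_R$. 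The remaining derivative then lands on $B_{\eta_1}h$ as $\pdv_u$ times a bounded factor.

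Radial derivatives in $r$ commute with the whole construction, since the path is purely angular at fixed $r$. The only exception is the explicit $r$-dependence of coefficients such as $\sinh r\cosh r$. Their derivatives are comparable to the coefficients themselves, so they do not worsen the power $\delta^\alpha$.

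Iterating over multi-indices with $|\alpha'|+\alpha_d\le s$ and using Leibniz's rule gives the bound for integer $s$. For general $s$, I interpolate, or argue on the Littlewood–Paley pieces $P_{j,k}$.

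\textbf{Main obstacle.} The hardest step is the integration by parts in $y_1$: I must make sure that the differentiated kernel $\pdv_{y_1}K$ does not reintroduce a non-integrable $\frac{1}{1-s}$. The text itself warns that $\pdv_{A(w)}=-\Omega^{-1}\pdv_s$ holds on the source but not on $K$. I would first handle this by writing every derivative of $K$ as a smooth function of $(s,\Omega)$ times powers of $s$, using the explicit form $\frac{\cos(\omega_R s)-1}{\omega_R^2}$. This vanishes to second order at $s=0$ and is smooth at $s=1$, so only the Jacobian singularity remains, and Step 1 already disposes of that. If this fails for the $AA$ component, I would fall back on treating it as the separately noted smoothing boundary term.
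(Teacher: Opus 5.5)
\textbf{Gap in Step 2.} Your skeleton matches the paper's: an $L^2$ bound from changing variables in the source argument, then $D=\partial_{A(w)}+\partial_{A(y_1)}$ and an integration by parts in $y_1$ onto $\eta$. The gap is at the point you flag as the main obstacle, and your proposed remedy does not close it.

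In Step 2 you let $D$ act only on the source and assert that the derivatives of $K$ in $w$ or $y_1$ ``remain bounded''. They do not. The kernel is evaluated at the original endpoint $y$, so it depends on $(w,y_1)$ through $\Omega=A(y)-A(y_1)=\frac{A(w)-A(y_1)}{1-s}$. The chain rule then gives $\partial_{A(w)}K=\frac{1}{1-s}\,\partial_\Omega K$ and $\partial_{A(y_1)}K=-\frac{1}{1-s}\,\partial_\Omega K$. Combine this with the Jacobian $\frac{1}{1-s}$ and the gain $(1-s)^{1/2}$ from the one-dimensional dilation in $L^2$. The Minkowski bound for each such term becomes $\int_0^1(1-s)^{-3/2}\,\dd s=\infty$.

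Your fallback does not help. That $\frac{\cos(\omega_R s)-1}{\omega_R^2}$ vanishes to second order at $s=0$ and is smooth in $(s,\Omega)$ says nothing about this chain-rule singularity at $s=1$. The $s=1$ boundary term you mention is an unrelated object.

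\textbf{The missing identity.} The observation you need, which is exactly the paper's argument, is that $D$ annihilates the kernel identically. Indeed $D\Omega=0$, hence $D\,P(K^{(n)})=0$, while $Du=1$ for $u=\frac{A(w)-sA(y_1)}{1-s}$. So write $\partial_{A(w)}=D-\partial_{A(y_1)}$ and apply it to the whole integrand $P(K^{(n)})\,B_{\eta_1}h(r,u,B')$.
\begin{itemize}
\item The $D$-part is $P(K^{(n)})\,\partial_u B_{\eta_1}h$, a correctly scaled angular derivative of the source.
\item The $-\partial_{A(y_1)}$-part is integrated by parts entirely onto $\eta$.
\end{itemize}
No derivative ever lands on $K$. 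The operator keeps its form, with $\eta$ or the source differentiated, so the Step~1 bound iterates to all orders. In your bookkeeping the two kernel-derivative terms cancel exactly; you must invoke that cancellation rather than estimate them separately.

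\textbf{Smaller points.}
\begin{itemize}
\item In Step 1 the $\lambda$-range is not bounded uniformly in $w$; it is of size about $|A(w)-A(y_1)|^{-1}$. What actually closes the $L^2$ bound is the decay $\lambda^{-1/2}$ against $\dd\lambda/\lambda$, i.e.\ the paper's $\int_0^1(1-s)^{-1/2}\,\dd s<\infty$.
\item You should say explicitly that $\partial_{B'}$ passes straight through to the source, since neither $P(K^{(n)})$ nor $\eta$ depends on the $B'$-coordinate of $w$.
\item You need $\partial^k\eta$ in $L^\infty$ (or $L^1$), not just in $\wH^s$. This follows from the hypothesis by Sobolev embedding, as in the paper.
\end{itemize}
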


\begin{proof}[Proof of Lemma~\ref{lem:averaging_sobolev_estimate}]
    Note that our $\wH^s$ norms are measured in $w$, and $A(w)$ is of unit scale but geometric length $1/\delta$, so the $\wH^s$ norm can be measured in $\pdv_{A(w)}$ with no additional factors of $\delta$. 
    Fix a particular component $ij$ and drop the dependence of $P$ on this component for brevity. 
    Let us consider the $A(w)$ derivative first. 
    Observe that, if $D = \pdv_{A(w)} + \pdv_{A(y_1)}$, $D P(K^{(n)}) = 0$, since $PK^{(n)}(s,w,y_1)$ depends only on $w$ and $y_1$ through $A(w)-A(y_1)$.
    In addition, letting $\alpha = \frac{A(w)-sA(y_1)}{1-s}$, we have that $DB_{\eta_1}h = \frac{1-s}{1-s} \pdv_{\alpha} B_{\eta_1}h =  \pdv_{\alpha} B_{\eta_1}h$.

    Combining these facts together and using that $\pdv_{A(w)} = D - \pdv_{A(y_1)}$, we observe
    \begin{align*}
        \pdv_{A(w)}(SB_{\eta_1}h)^{ij} &= \int \eta(y_1)  \int_0^1 \pdv_{A(w)}( P(K^{(n)}(s,w,y_1))^{ij}B_{\eta_1}h\left(r, \frac{A(w)-sA(y_1)}{1-s},B'\right)\frac{1}{1-s} \dd s) \dd y_1 \\
        &= \int \eta(y_1)  \int_0^1 - \pdv_{A(y_1)} \left(P(K^{(n)})^{ij} B_{\eta_1}h\left(r, \frac{A(w)-sA(y_1)}{1-s},B'\right) \right) \frac{1}{1-s} \dd s \dd y_1\\
        &+ \int \eta(y_1)  \int_0^1 (P(K^{(n)}(s,w,y_1))^{ij}) \pdv_{\alpha} B_{\eta_1}h\left(r, \frac{A(w)-sA(y_1)}{1-s},B'\right)\frac{1}{1-s} \dd s \dd y_1.
    \end{align*}
The $A(y_1)$ derivative can be safely integrated by parts onto $\eta(y_1)$, whereas $\alpha$ derivatives in $B_{\eta_1}h$ are simply angular derivatives of the correct scale. 
Hence for any $k > 0$ we have
    \begin{align*}
    &\|\pdv_{A(w)}^k (S B_{\eta_1}h)^{ij}\|_{L^2} \\
    &\lesssim \|\pdv^k_{A(y_1)} \eta\|_{L^\infty} \int_0^{1/2} \left\|P(K^{(n)}) \pdv_{\alpha}^{k} B_{\eta_1}h\left(r, \frac{A(w)-sA(y_1)}{1-s},B'\right)\frac{1}{(1-s)}\right\|_{L^2(w)} \dd s \\
    &\lesssim \|\pdv^k_{A(y_1)} \eta\|_{L^\infty} \int_0^{1/2} \left\|P(K^{(n)}) \pdv_{\alpha}^{k} B_{\eta_1}h\left(r, \frac{A(w)-sA(y_1)}{1-s},B'\right)\frac{1}{(1-s)^{1/2}}\right\|_{L^2(r,\alpha,B')} \dd s \\
    &\lesssim \|\eta\|_{\wH^{d/2+k}} \|P(K^{(n)})\|_{L^{\infty}} \|B_{\eta_1}h\|_{\wH^k} \lesssim \|P(K^{(n)})\|_{L^{\infty}} \|B_{\eta_1}h\|_{\wH^k},
    \end{align*}
    where the last step follows by anisotropic Sobolev embedding and the fact that $\eta$ is uniformly bounded in $\wH^k$.
    In the penultimate step, we observe that $(1-s)\dd \alpha = \dd A(w)$, which allows us to integrate.

    The $\delta^\alpha$ factor then comes from the $L^{\infty}$-norm of $P(K^{(n)}(s))$ alongside our definition of anisotropic Sobolev norm for tensors.

    We now need to check derivatives in other angular components $B'$. For this, observe that $P(K^{(n)}(s,w,y_1))$ and $\eta(y_1)$ do not depend on the $B'$ coordinate of $w$, but only the corresponding coordinate of $y_1$. 
    Therefore, the $B'$ derivative simply passes through the other terms and we find
    \[\|\pdv_{B'}^k (S B_{\eta_1}h)^{ij}\|_{L^2} \lesssim \|\eta\|_{\wH^{d/2}}\|P(K^{(n)})\|_{L^{\infty}} \|B_{\eta_1}h\|_{\wH^k},\]
    as required.

    Finally, we consider the radial derivatives.
    By taking the radial path to $y_1$ first, $P(K^{(n)})$ only depends radially on $y_1$ components, not on $y$; hence, we can apply the same bound
    \[\|\pdv_{r}^k (S B_{\eta_1}h)^{ij}\|_{L^2} \lesssim \|\eta\|_{\wH^{d/2}}\|P(K^{(n)})\|_{L^{\infty}} \|B_{\eta_1}h\|_{\wH^k}.\]
\end{proof}

Note that the aforementioned substitution $-t = \frac{s}{1-s}((A(w)-A(y_1))$ would also give an easy proof of a same-level, higher derivative estimate; derivatives fall either on $K$ or on $B_{\eta_1}$ by Leibniz rule, and with respect to this substitution, $w$ and $y$ derivatives are indistinguishable, and importantly the smoothness of $\eta$ is not necessary at all at this step. 

By Lemma~\ref{lem:averaging_sobolev_estimate}, and the definition of the anisotropic Sobolev norm, we therefore get an estimate
\[ \|S_f^2 B_{\eta_1,f}h\|_{\wH^{s+2}} \lesssim \delta^{-2}\|B_{\eta_1,f}h\|_{\wH^{s+2}} \lesssim \delta^{-4} \|h\|_{\wH^{s}},\]
where the first factor of $\delta^{-2}$ comes from the $\sinh(r)\cosh(r) \sim \delta^{-2}$ term on an $Ar$ component, and the extra two factors of $\delta^{-1}$ come from the regularity gain obtained by Lemma~\ref{lem:commuting_angular}.

Unfortunately, this naive estimate is not sharp in this particular case, so we instead argue that we gain regularity directly from the integrations in the augmented system.

Up until now, we only used the path in one angular direction, but we require our regularity gain to be in all angular components.
To this end, we need to build a framework that will let us deal with multiple directions at once.

\begin{definition}
For a point $y$ with angular component $(A(y),B'(y))$, we define 
\[\Af(y) = \frac{(A(y),B'(y))}{(A(y),B'(y)) - p_0} - p_0,\]
where $p_0$ denotes either the point at the top of the hemisphere or the bottom.
This is the Gnomonic projection, and $\Af(y) \in \R^{d-1}$ are called the Gnomonic coordinates.
\end{definition}

Gnomonic coordinates have the nice feature that great circles are taken to straight lines; hence we still have a parametrization $\Af(w) = (1-s)\Af(y) + s \Af(y_1)$ as vectors in $\R^{d-1}$.
Moreover, observe that as rotations on $\Sb^{d-1}$ are isometries, for any point $y_1 \in \Sb^{d-1}$ there exists a rotation $R \in \SO (d-1)$ fixing $y$ that takes $A(Ry_1) = A(y)$.
As a result, up to some smooth and $O(1)$ changes of variables, the same kernel structure $K(s,y,y_1)$ will hold for each individual angular direction in a uniformly bounded way. 
For the purposes of the following estimate, we will denote this new, fully angular-dependent kernel by $L(s, \Af(w),\Af(y_1))$.

\begin{lemma}\label{lem:fio_estimate}
    Consider a solution operator $S$ given by a average over angular paths such that on each angular segment $\Af$, it acts on a source $B_{\eta_1}h$ by 
    \[SB_{\eta_1}h = \int \eta(r,\Af(y_1),B') \int_0^1 s^k L(s,\Af(w),\Af(y_1)) B_{\eta_1}h\left(r,\frac{\Af(w)-s\Af(y_1),B'}{1-s}\right) \frac{1}{1-s} \dd s \dd \Af(y_1),\]
    where $L(s,\Af(w),\Af(y_1))$ represents a particular component of the source, namely a function of $K(s,w,y_1)$, its derivatives, and the reparameterization into Gnomonic coordinates.
    Assume in addition that $L$ is smooth and oscillatory, with all derivatives uniformly bounded $O(1)$, hence with uniformly bounded frequencies, i.e., $\hat{L}(s,\lambda,\mu) \sim O(1)$.
    Then $S$ is smoothing of order $-(k+1)$ in all angular directions with phase $\Phi_{\Af(w)} = \frac{\Af(w)(\xi+\lambda)}{1-s}$ with respect to the anisotropic Sobolev space $\wH^s$.
    In particular, we have the estimate
    \[ \|SB_{\eta_1}h\|_{\wH^{s+k+1}_{an}} \lesssim \|B_{\eta_1}h\|_{\wH^{s}_{an}},\]
    where the subscript refers specifically to the angular derivatives, with no additional factors of $\delta$.
\end{lemma}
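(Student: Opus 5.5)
Since the $r$ and $B'$ variables enter the operator only as parameters (the radial path is taken first), I will prove the gain of $k+1$ angular derivatives by a Fourier argument in the Gnomonic variable $\Af\in\R^{d-1}$ alone. Fix $r,B'$. Write $B_{\eta_1}h(\Af)=\int e^{i\Af\cdot\xi}\widehat{B_{\eta_1}h}(\xi)\dd\xi$. Expand $L(s,\Af(w),\Af(y_1))$ in its Fourier variables $(\lambda,\mu)$, which by hypothesis have bounded symbol $\hat L(s,\lambda,\mu)=O(1)$. Substituting into the formula for $S$, the dependence on $\Af(y_1)$ collects into
\[
\int \eta(\Af(y_1))\, e^{i\Af(y_1)\cdot(\mu - s\xi/(1-s))}\dd\Af(y_1) = \hat\eta\Big(\tfrac{s\xi}{1-s}-\mu\Big).
\]
The dependence on $\Af(w)$ is then the pure phase $e^{i\Af(w)\cdot(\xi/(1-s)+\lambda)}$, which is the phase $\Phi_{\Af(w)}$ in the statement. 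So $S$ becomes an oscillatory integral in $(s,\xi,\lambda,\mu)$. The output frequency is $\zeta=\xi/(1-s)+\lambda$, and the amplitude is $s^k(1-s)^{-1}\hat L\,\hat\eta(\tfrac{s\xi}{1-s}-\mu)\widehat{B_{\eta_1}h}(\xi)$.

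The key steps, in order, are as follows.

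First, split the $s$-integral into $s\le 1/2$ and $s\ge 1/2$, using a smooth partition.

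Second, on $s\le 1/2$, for $|\xi|\gg1$ the factor $\hat\eta(\tfrac{s\xi}{1-s}-\mu)$ is rapidly decaying unless $s\lesssim (1+|\mu|)/|\xi|$. Combined with the weight $s^k$, integrating in $s$ produces a factor $\langle\xi\rangle^{-(k+1)}$, because $\int_0^{c/|\xi|} s^k\dd s\sim|\xi|^{-k-1}$. The tail of $\hat\eta$ is controlled by the hypothesis $\|\eta\|_{\wH^{s}}=O(1)$, via Schwartz-type decay. Since $|\zeta|\lesssim|\xi|+|\lambda|$ with $|\lambda|=O(1)$, this yields $\langle\zeta\rangle^{k+1}|\widehat{S B_{\eta_1}h}(\zeta)|\lesssim$ an $L^2$-bounded operator applied to $\langle\xi\rangle^{0}\widehat{B_{\eta_1}h}$. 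Plancherel then gives the angular $\wH^{s+k+1}$ bound, after inserting $\langle\zeta\rangle^s\lesssim\langle\xi\rangle^s$ on this region.

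Third, on $s\ge 1/2$, the Jacobian gives $\dd\Af(w)=(1-s)^{d-1}\dd\alpha$ in the rescaled variable. The same decay of $\hat\eta$ now forces $|\xi|/(1-s)\lesssim 1+|\mu|$. Hence both the input frequency and the output frequency $\zeta$ are $O(1)$, and this piece is infinitely smoothing with $O(1)$ constants. This is the regime where the earlier Lemma~\ref{lem:averaging_sobolev_estimate} loses integrability through the factor $1/(1-s)$. Here the localization of $\hat\eta$ is what absorbs that factor: I will trade powers of $(1-s)$ against powers of $\langle \xi/(1-s)\rangle^{-N}$.

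Fourth, for the remaining bookkeeping, all constants come from $\sup|\hat L|$, from $\|\eta\|_{\wH^{N}}$, and from the smooth $O(1)$ Gnomonic change of variables. Since Gnomonic coordinates have unit scale, angular derivatives in them carry exactly the $\delta^{-1}$ weight built into $\wH$. By Proposition~\ref{prop:equiv_of_norms}, the estimate therefore transfers with no extra powers of $\delta$.

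The main obstacle I expect is the transition region $s\to1$, together with the uniformity of the stationary-phase-type localization in the $\lambda,\mu$ frequencies of $L$. The hypothesis only gives $\hat L=O(1)$, not decay, so the $\mu$-integral is not obviously summable. My first attempt is to use the smoothness of $L$ to obtain $\langle\mu\rangle^{-N}$ decay of $\hat L$, by integrating by parts in $\Af(y_1)$. Failing that, I would move the $\Af(y_1)$-derivatives onto $\eta$, as in the proof of Lemma~\ref{lem:averaging_sobolev_estimate}, before passing to Fourier variables.
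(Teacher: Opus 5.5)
Your outline follows the paper's proof in its main moves. You Fourier transform in the Gnomonic variable and integrate out $\Af(y_1)$ to produce $\hat\eta$. You split at $s=1/2$, and on $s\le 1/2$ you play the localization $s\lesssim\langle\mu\rangle/|\zeta|$ against the weight $s^k$ to gain $k+1$ derivatives. (To make that step an honest $L^2$ bound, do what the paper does: write $\widehat{SB_{\eta_1}h}(\zeta)$ directly, where the Jacobians cancel, and apply Cauchy--Schwarz in $(s,\lambda,\mu)$ at fixed $\zeta$. One factor is then $\int_0^1 s^{2k}\langle s\zeta\rangle^{-N}\,\dd s\lesssim|\zeta|^{-(2k+1)}$, and the leftover $\langle\zeta\rangle$ is absorbed by the length of the effective $s$-interval after substituting $\xi=(1-s)\zeta-\lambda$.)

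The genuine gap is your third step. Localization of $\hat\eta$ does give the $s\ge 1/2$ piece $O(1)$ output frequency, but it does not make that piece bounded in $L^2$. Trading powers of $(1-s)$ against $\langle \xi/(1-s)\rangle^{-N}$ gains nothing when $|\xi|\lesssim 1-s$. Use the Jacobian $(1-s)^{-(d-1)}$ that the duality actually produces, which is the normalization the paper's proof uses. Squaring and substituting $\xi=(1-s)\zeta$ then leaves the weight $\int_0^{1/2}\langle\xi/t\rangle^{-N}t^{-(d-1)}\,\dd t\sim|\xi|^{2-d}$ on $|\widehat{B_{\eta_1}h}(\xi)|^2$, which is unbounded as $\xi\to 0$ for $d\ge 3$. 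This is not just lossy bookkeeping: testing on $\widehat{B}$ concentrated at frequency scale $\epsilon\to 0$ shows the piece is genuinely unbounded on $L^2(\R^{d-1})$. In physical space, as $s\to 1$ the starting point $(\Af(w)-s\Af(y_1))/(1-s)$ escapes to infinity and the kernel decays only like $|y|^{-1}$.

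The missing ingredient is the bounded support of $B_{\eta_1}h$ in Gnomonic coordinates. It gives $\|\widehat{B_{\eta_1}h}\|_{L^\infty}\le\|B_{\eta_1}h\|_{L^1}\lesssim\|B_{\eta_1}h\|_{L^2}$ with no loss in $\delta$. The pointwise bound $|\widehat{SB_{\eta_1}h}(\zeta)|\lesssim\langle\zeta\rangle^{-N}\|\widehat{B_{\eta_1}h}\|_{L^\infty}$ on $s\ge 1/2$ then integrates against any power of $\langle\zeta\rangle$; this is exactly how the paper closes that regime. Equivalently, on $s\ge 1/2$ you can change variables from $\Af(y_1)$ to the source point, so that all $\Af(w)$-dependence sits in $\eta$, and bound the piece by $\|B_{\eta_1}h\|_{L^1}$.

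A smaller point concerns your Fourier expansion of $L$, which you take in $(\Af(w),\Af(y_1))$. Your phase $\Af(w)\cdot(\xi/(1-s)+\lambda)$ is therefore not the phase in the statement. The paper expands in $(\alpha,\Af(y_1))$ with $\alpha=(\Af(w)-s\Af(y_1))/(1-s)$, which yields $\Af(w)(\xi+\lambda)/(1-s)$. Under the lemma's literal hypothesis either choice works. However, the kernels the lemma is applied to depend on $\Af(y)-\Af(y_1)=(\Af(w)-\Af(y_1))/(1-s)$, so their $\Af(w)$-derivatives grow like $(1-s)^{-1}$. Only the paper's variables give $s$-uniform control of $\hat L$ in that application.
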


\begin{proof}[Proof of Lemma~\ref{lem:fio_estimate}]
As before, fix a component $ij$ of $S^2_fB_{\eta_1}h$.
Since $A(w),A(y_1)$ is $O(1)$ quantities that are geometrically length $1/\delta$, the corresponding norm $\wH^s$ carries no factors of $\delta$ per derivative and hence can be treated as the standard norm for the purposes of this argument.

Our operator is of the form
\[\int \eta(\Af(y_1)) \int_0^1 s^k L(s,\lambda,\mu) B_{\eta_1}h\left(\frac{\Af(w)-s\Af(y_1)}{1-s}\right) \frac{1}{(1-s)^{d-1}} \dd s \dd \Af(y_1),\]
where $L(s,\lambda,\mu) \sim O(1)$.
    
Locally, we can treat $\alpha \in \R$ and write $B_{\eta_1}h(\alpha) = \int \widehat{B_{\eta_1}h}(\xi) e^{i\xi \alpha}\dd \xi$.
Likewise, we expand out $L(s,w,y_1) = \int \hat{L}(s,\lambda) e^{i \alpha \lambda}e^{i \mu \Af(y_1)} \dd \lambda \dd \mu$. 
Here we similarly that $\alpha = \frac{\Af(w)-s\Af(y_1)}{(1-s)^{d-1}}$, so we can express our integral as 
\begin{align*} &\int_\R \int \int \int \eta(\Af(y_1)) \int_0^1 s^k\hat{L}(s,\lambda,\mu)e^{i (\alpha \lambda + \Af(y_1) \mu)} \widehat{B_{\eta_1}h}({\xi}) e^{i\xi \alpha} \frac{1}{(1-s)^{d-1}} \dd s  \dd \lambda \dd \Af(y_1) \dd \xi \dd \mu. \\
&= \int \eta(\Af(y_1)) s^k\hat{L}(s,\lambda,\mu) e^{-i\Af(y_1) \left( \frac{s(\lambda+\xi)}{1-s}\right)} e^{i\Af(y_1) \mu}\widehat{B_{\eta_1}h}(\xi) e^{iA(w) \left(\frac{\xi+\lambda}{1-s}\right)} \frac{1}{(1-s)^{d-1}} \dd s  \dd \lambda \dd \Af(y_1) \dd \xi \dd \mu.
\end{align*}

Performing the integration first over $\Af(y_1)$, we obtain
\[ \int_{\R^{d-1}} \int_{\R^{d-1}} \int_{\R^{d-1}}\int_0^1 \hat{\eta}\left(\frac{s( \xi + \lambda)}{1-s}-\mu\right) \widehat{B_{\eta_1}h}({\xi}) s^k \hat{L}(s,\lambda) e^{i \Af(w) (\xi + \lambda)/(1-s)} \frac{1}{(1-s)^{d-1}} \dd s \dd \xi \dd \lambda \dd \mu.\]

As a result, we have the symbol
\[a(s,\xi, \lambda,\mu) = \hat{\eta}\left(\frac{s(\xi + \lambda)}{1-s}-\mu\right) \frac{s^k \hat{L}(s,\lambda,\mu)}{(1-s)^{d-1}}.\]
Observe that since $\eta$ is smooth, its Fourier transform is rapidly decaying with $\left| \hat{\eta}\left(\frac{s(\xi+\lambda)}{1-s}-\mu\right) \right|\lesssim \langle \frac{s(\xi+\lambda)}{1-s} - \mu \rangle^{-N}$ for every $N \geq 0$.
Moreover, $\hat{L}(s,\lambda,\mu)$ is smooth and uniformly bounded in the variables $\Af(y_1)$ and $\alpha$, so $|\hat{L}(s,\lambda,\mu)| \lesssim \langle \lambda \rangle^{-N}\langle \mu \rangle^{-N}$ for any $N \geq 0$.
Therefore, $|a(s,\xi)| \lesssim  \langle \frac{s (\xi +\lambda)}{(1-s)}-\mu \rangle^{-N} s^k \langle \lambda \rangle^{-N}\langle \mu \rangle^{-N} \frac{1}{(1-s)^{d-1}}$.
The larger frequency contributions can therefore be neglected. 

Due to the phase $e^{i(\xi + \lambda) A(w)/(1-s)}$, taking derivatives in $A(w)$ gives extra factors of $(\xi+\lambda)/(1-s)$. 

To obtain the anisotropic Sobolev estimate, we first define auxilliary variables $\zeta = \frac{\xi + \lambda}{1-s}$, $u = (1-s)\zeta - \lambda$; the Jacobian factor of $(1-s)^{-(d-1)}$ cancels, and we obtain
\[\widehat{SB_{\eta_1}h}(\zeta) =  \int \int \left(  \int_{s=0}^1 \langle s^k \hat{\eta}(s\zeta-\mu) \hat{L}(s,\lambda,\mu) \widehat{B_{\eta_1}h}((1-s)\zeta - \lambda) \dd s \dd \lambda \right) \dd \mu. \]
Without loss of generality, we can assume that $|\zeta| \geq 1$. The region where $|\zeta| \leq 1$ is of unit scale, so all terms are uniformly $O(1)$ and we would obtain a direct bound of the source $B_{\eta_1}h$ in $L^2$ by Plancherel.
Let us first study a pointwise estimate in $\zeta$, and then integrate later.

Using Cauchy-Schwarz and the decay on $\hat{L}, \hat{\eta}$ we found earlier, we can write
\begin{align*}
|\widehat{SB_{\eta_1}h}(\zeta)|^2 &\lesssim \left( \int \int \int s^{2k} \langle \lambda \rangle^{-M} \langle \mu \rangle^{-M} \langle s\zeta - \mu \rangle^{-N} \dd s \dd \lambda \dd \mu \right)   \\
&\left(\int \int \int \langle \lambda \rangle^{-N} \langle \mu \rangle^{-N} \langle s\zeta - \mu \rangle^{-N} \widehat{B_{\eta_1}h}((1-s)\zeta - \lambda) \dd s \dd \lambda \dd \mu \right).
\end{align*}

For the first term, we use Peetre's inequality, namely that $\langle s \zeta - \mu \rangle^{-N} \lesssim \langle s \zeta \rangle^{-N} \langle \mu \rangle^{N}$; hence the $\mu$ factor gets absorbed into the decay term from $\hat{L}$ taken with a higher power, $\langle \lambda \rangle^{-N}$ can be integrated, and we find that the first term is bounded by 
\[\int_0^1 s^{2k} \langle s |\zeta| \rangle^{-N} ds = \frac{1}{|\zeta|} \int \left(\frac{t}{|\zeta|}\right)^{2k} \langle t \rangle^{-N} dt \lesssim |\zeta|^{-(2k+1)}.\]

Here we are using that $s \lesssim 1/|\zeta|$, which is true provided $s$ is sufficiently close to $0$; otherwise, $\zeta$ has arbtirarily many positive factors of $(1-s)$, so the $s=1$ endpoint is bounded.

As remarked earlier, each $A(w)$ derivative corresponds to a factor of the Japanese bracket $\langle \zeta \rangle$.
Hence, to show the desired regularity, let us multiply the second term by $\langle \zeta \rangle^{2(s+k+1)}$. 
As a result of the first bound, we only have $2(s+k+1)-2k-1 = 2s+1$ factors to deal with, so it suffices to bound 
\[\|B_{\eta_1}h\|_{\wH^{s+k+1}_{an}}^2 \lesssim \int \left(\int \int \int \langle \zeta \rangle^{2s+1} \langle \lambda \rangle^{-N} \langle \mu \rangle^{-N} \langle s\zeta - \mu \rangle^{-N} |\widehat{B_{\eta_1}h}((1-s)\zeta - \lambda)|^2 \dd s \dd \lambda \dd \mu \right) \dd \zeta.\]

For this bound, we split into two regimes: $s \leq 1/2$ and $s \geq 1/2$.

For $s \geq 1/2$, we have $|s \zeta| \geq |\zeta|/2$, hence $\langle s \zeta - \mu \rangle^{-N} \lesssim \langle \mu \rangle^{N} \langle \zeta \rangle^{-N}$.
In these coordinates, $B_{\eta_1}h$ is supported in a unit ball, so the $\|\widehat{B_{\eta_1}h}\|_{L^{\infty}} \sim \|B_{\eta_1}h\|_{L^1} \sim \|B_{\eta_1}h\|_{L^2}$ with no factors of $\delta$ in the constant. 
Thus our bound is
\begin{align*}
&\left| \int \left(\int \int \int \langle \zeta \rangle^{2s+1} \langle \lambda \rangle^{-N} \langle \mu \rangle^{-N} \langle s\zeta - \mu \rangle^{-N} |\widehat{B_{\eta_1}h}((1-s)\zeta - \lambda)|^2 \dd s \dd \lambda \dd \mu \right) \dd \zeta.\right| \\
&\lesssim \|B_{\eta_1}h\|_{L^2} \int \langle \zeta \rangle^{2s+1 - N} \langle \lambda \rangle^{-M} \langle \mu \rangle^{N-M} \dd s \dd \lambda \dd \mu \lesssim  \|B_{\eta_1}h\|_{L^2}. 
\end{align*}

For $0 \leq s \leq 1/2$, we have $\zeta - \xi = s \zeta + \lambda = (s \zeta - \mu) + (\lambda + \mu)$, hence by Peetre's inequality again we have
\[\langle \zeta \rangle^{2s} \lesssim \langle \xi \rangle^{2s} \langle \zeta - \xi \rangle^{2s} \lesssim \langle \xi \rangle^{2s} \langle s \zeta - \mu \rangle^{2s} \langle \lambda \rangle^{2s} \langle \mu \rangle^{2s}.\]

Using that $\langle \frac{\xi + \lambda}{1-s} \rangle \lesssim \langle \xi + \lambda \rangle$ and that $|s\zeta| \lesssim \langle s \zeta - \mu \rangle^{-N} \lesssim \langle \mu \rangle^N \langle s |\xi + \lambda| \rangle^{-N}$, we find that our bound becomes

\begin{align*}
&\left| \int \left(\int \int \int \langle \zeta \rangle^{2s+1} \langle \lambda \rangle^{-N} \langle \mu \rangle^{-N} \langle s\zeta - \mu \rangle^{-N} \widehat{B_{\eta_1}h}((1-s)\zeta - \lambda) \dd s \dd \lambda \dd \mu \right) \dd \zeta.\right| \\  
&\lesssim \int \int \langle \lambda \rangle^{-N'} \langle \mu \rangle^{-N'} \int \langle xi \rangle^{2s} |\widehat{B_{\eta_1}h}(\xi)|^2 \int \langle \xi + \lambda \rangle \langle s |\xi+ \lambda| \rangle^{-N'} \dd s \dd \lambda \dd \mu \dd \xi
&\lesssim \|B_{\eta_1}h\|_{\wH^s_{an}}^2,
\end{align*}
where in the last step we use the fact that the interval length in $s$ is like $\langle \xi + \lambda \rangle^{-1}$, canceling with the extra power, and the rest is absorbed by decaying, integrable terms.

Note that the Jacobian factor is not important due to the $s=0$ regime.

Thus, we find that $\|SB_{\eta_1}h\|_{\wH^{s+k+1}_{an}} \lesssim \|B_{\eta_1}h\|_{\wH^{s}_{an}}$, i.e. we gain $k+1$ derivatives in all angular directions, as desired.
\end{proof}

By the appendix computations A.2.2, A.4.2 in \cite{isett2025integral} and the first solution operator construction, the radial operator is already known to improve regularity by $2$ with no factors of $\delta$, so we get the desired maximal regularity in the radial components as well.

Note that now we can safely handle both the $Ar$ and $AA$ components.
Observe that the trick in the proof of Lemma~\ref{lem:averaging_sobolev_estimate} causes a loss of $2$ factors of $\delta$ from the sharp estimate we would expect.
However, by applying Lemma~\ref{lem:fio_estimate}, we recover regularity and the factors of $\delta$ back.
Explicitly, since the $Ar$ kernel is of the form $\Omega^2 s^2 \left(\frac{\cos(\omega_R s) - 1}{(\omega_R s)^2}\right)$ where the latter is uniformly bounded, Lemma~\ref{lem:fio_estimate} gives us a gain of $3$ derivatives.
On the other hand, the $AA$ component is of the form $\Omega s \left(\frac{\sin(\omega_R s)}{\omega_R s}\right)$, which gains two derivatives by the same reasoning.

Consequently, we find
\[\|(SB_{\eta_1}h)^{AA}\|_{\wH^s} \lesssim \|B_{\eta_1}h\|_{\wH^{s-2}} \lesssim \|h\|_{\wH^{s-2}} \]
and
\[\|(SB_{\eta_1}h)^{Ar}\|_{\wH^s} \lesssim \delta^{-2}\|B_{\eta_1}h\|_{\wH^{s-2}} \lesssim \delta^{-2}\|h\|_{\wH^{s-2}}. \]

Combining everything together, by first applying Lemma~\ref{lem:averaging_sobolev_estimate} together with Lemma~\ref{lem:fio_estimate} for the regularity improvement, we get a sharp estimate
\[ \|S_f^2 B_{\eta_1,f}h\|_{\wH^{s+2}} \lesssim \delta^{-2}\|B_{\eta_1,f}h\|_{\wH^{s}} \lesssim \delta^{-2} \|h\|_{\wH^{s}}.\]

An identical argument as in Lemma~\ref{lem:fio_estimate} gives the commutator estimate, except applying $\pdv_{\Af(w)}$ before $S$ yields a factor of $|\xi|$ in the bound, whereas applying it after yields a factor of $|\zeta|$. 
From our earlier computation, $\xi - \zeta = (s\zeta + \lambda) = (s\zeta - \mu) + (\lambda + \mu)$, each of which are bounded by the factors of $-N'$ in the earlier argument.

As a result, $[S^2_f,\pdv_{A(w)}]$ is once again smoothing of order $-(k+1)$, which gives a regularity gain of $k+1$ directly.

Hence, 
\[ \|[S^2_{f},\pdv_{\omega}]B_{\eta_1,f}h\|_{\wH^{s+1}} \lesssim \delta^{-2}\|B_{\eta_1,f}h\|_{\wH^{s}}\]
with no additional factors of $\delta$.

\begin{lemma}\label{lem:remainder_second_operator}
The remainder operator $B_{\eta_2}$ given by smoothing
\begin{equation}\label{eqn:b_eta_2}
\langle b_{\eta_2}(\cdot, y), f \rangle = f(y_1) - \frac{\sin(\sqrt{g_{AA}}(A(y)-A(y_1)))}{\sqrt{g_{AA}}}\alpha_A(y_1) + \frac{1-\cos(\sqrt{g_{AA}}(A(y)-A(y_1)))}{g_{AA}}\pdv_A \alpha_A(y_1)
\end{equation}
against $\eta_2(y_1)$ satisfies the bound
\[\|B_{\eta_2}f B_{\eta_1}h \|_{\wH^{s+2}} \lesssim \|B_{\eta_1}h\|_{\wH^s} \lesssim \|h\|_{\wH^s}\]
for any $s > 2$.
For any $s > 0$, we have
\[\|B_{\eta_2}f B_{\eta_1}h \|_{\wH^{s+2}} \lesssim \|B_{\eta_1}h\|_{\wH^2} \lesssim \max\{1,\delta^{s}\}\|h\|_{\wH^{s}}\]
\end{lemma}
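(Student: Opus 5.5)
The plan is to write $B_{\eta_2}$ as a finite-rank smoothing operator and estimate its two factors separately. From \eqref{eqn:b_eta_2}, pairing the homogeneous part of the recovery formula against $\eta_2(y_1)$ gives
\[
B_{\eta_2}g(y)=\int \eta_2(y_1)\Big[g(y_1)\,c_0(y,y_1)+\pdv_{A(y_1)}\big(c_1(y,y_1)\,g(y_1)\big)+\pdv^2_{A(y_1)}\big(c_2(y,y_1)\,g(y_1)\big)\Big]\dd y_1 ,
\]
with $g=B_{\eta_1}h$. The coefficients are
\[
c_1=\frac{\sin(\sqrt{g_{AA}}\,\Delta A)}{\sqrt{g_{AA}}},\qquad
c_2=\frac{1-\cos(\sqrt{g_{AA}}\,\Delta A)}{g_{AA}},\qquad \Delta A=A(y)-A(y_1).
\]
The derivative form comes from rewriting $\alpha_A(y_1)$ and $\pdv_A\alpha_A(y_1)$ through the duality in which $\alpha_A$ is a derivative of the adjoint variable.

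I would integrate the two $\pdv_{A(y_1)}$ derivatives by parts onto $\eta_2\,c_i$. The operator then becomes
\[
B_{\eta_2}g(y)=\int \widetilde K(y,y_1)\,g(y_1)\,\dd y_1 ,
\]
where $\widetilde K$ is smooth and built from $\eta_2$, its first two angular derivatives, and the trigonometric coefficients. By completeness of the augmented system (Definition~\ref{def:aug}), this agrees with $\sum_{\Af}(\zeta_\eta)_{\Af}\langle \Zf^{\Af},g\rangle$.

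\textbf{Output regularity.} All $y$-dependence of $\widetilde K$ sits in $c_0,c_1,c_2$, which are trigonometric in $A(y)$ with $O(1)$ frequency $\sqrt{g_{AA}}$, and in the cutoff. Hence every unit-scale angular derivative $\pdv_{A(y)}$ costs $O(1)$, with no $\delta$ loss, since the unit-scale coordinate $A$ is exactly the one measured by $\wH^s$. Radial derivatives are harmless because $\eta_2$ and the coefficients have bounded radial dependence on the annulus. This gives $\|\pdv^\alpha_y\widetilde K(y,\cdot)\|_{L^2}\lesssim 1$ for all $\alpha$.

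\textbf{Input side.} Cauchy--Schwarz then yields $\|B_{\eta_2}g\|_{\wH^{s+2}}\lesssim\|g\|_{L^2}$ up to the volume normalization $\delta^{d-1}$. This normalization is the same on both sides because of Proposition~\ref{prop:equiv_of_norms}: I would run the whole computation on $C$, where the norm is isotropic. Only $L^2$ of $g$ is needed, and $\|g\|_{L^2}\le\|g\|_{\wH^s}$. Combined with the first-operator bound $\|B_{\eta_1}h\|_{\wH^{s}}\lesssim\|h\|_{\wH^s}$ (same-level, coming from Lemma~\ref{lem:commuting_angular} and the $O(1)$ bound of Lemma~\ref{lem:b_eta_5.17}), this gives the first chain.

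For the second statement with general $s>0$, the chain is routed through $\wH^2$. One uses $\|B_{\eta_2}g\|_{\wH^{s+2}}\lesssim\|g\|_{L^2}\le\|g\|_{\wH^2}$, and then bounds $\|B_{\eta_1}h\|_{\wH^2}$ by $\|h\|_{\wH^s}$ using the smoothing of $B_{\eta_1}$. When $s<2$ the comparison between the isotropic and anisotropic scales produces the factor $\max\{1,\delta^{s}\}$, through the inequality $\delta^k\|\cdot\|_{\wH^k}\lesssim\|\cdot\|_{H^k}$ recorded earlier.

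\textbf{Main obstacle.} The delicate step is integrating by parts against $\eta_2$. The patch $\supp\eta_2$ has geometric size $1/\delta$ but is unit-size in the $A$ coordinate, so the bound $\|\pdv^k_A\eta_2\|_{L^\infty}\lesssim 1$ is available only in unit-scale angular derivatives. The check I would make first is that the factor $\sinh(r)\cosh(r)\sim\delta^{-2}$ from the angular Christoffel symbols does not enter the homogeneous part. From \eqref{eqn:ang_aug_sys}, it multiplies only $\alpha_r$ and the $(P^*_Rf)_{Ar}$ source, and neither appears in \eqref{eqn:b_eta_2}. The momentum components would be handled the same way using \eqref{eqn:B_aug_sys}.
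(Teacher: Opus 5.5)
Your main estimate is correct, and it takes a genuinely different route from the paper. The paper leaves the two angular derivatives on the source. It writes $(B_{\eta_2}B_{\eta_1}h)(y)=\int\eta_2(y_1)\,[g-c_1\pdv_{A}g+c_2\pdv_{A}^2g](y_1)\,\dd y_1$ with $g=B_{\eta_1}h$, lets $\pdv_{A(y)}$ fall only on $c_1,c_2$, and puts $\eta_2$ in $L^\infty$. It therefore controls every output derivative by $\|B_{\eta_1}h\|_{\wH^2}$, which is exactly why it needs $s>2$ to close with $\|B_{\eta_1}h\|_{\wH^s}$. It then passes from the one-direction formula \eqref{eqn:b_eta_2} to arbitrary angular displacements, using flatness of the homogeneous connection and a composition of single-coordinate legs.

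You instead integrate both $\pdv_{A(y_1)}$ by parts onto $\eta_2c_i$. This is the device the paper uses in Lemma~\ref{lem:averaging_sobolev_estimate}, but not here. As a result only $\|B_{\eta_1}h\|_{\wH^0}$ enters, and you get $\|B_{\eta_2}B_{\eta_1}h\|_{\wH^N}\lesssim_N\|h\|_{\wH^0}$ for every $N$, with no restriction on $s$. This also matches the actual structure better. In the kernel $b_\eta=\sum_\alpha(-1)^{|\alpha|}\pdv_x^\alpha(c\,Z\,\eta)$ of Theorem~\ref{thm:smth_ker_5.11}, of which your integrated kernel is the transpose, no derivative ever falls on the input. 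The transposition is harmless, since the kernel is smooth with $O(1)$ unit-scale derivatives in both variables. What you skip is the multi-direction step. In your framework it costs one sentence: on a hemisphere the full transport expresses $f(y)$ through $f$ and the unit-sphere gradient and Hessian of $f$ at $y_1$, with coefficients smooth in $(y,y_1)$ and $O(1)$. It should still be said.

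The step that fails is your justification of the second statement. The inequality $\delta^k\|u\|_{\wH^k}\lesssim\|u\|_{H^k}$ can only produce negative powers of $\delta$. $B_{\eta_1}$ smooths at geometric unit scale, i.e.\ at angular scale $\delta$ on $C$, so converting its isotropic gain into $\wH$-regularity costs $\delta^{-1}$ per angular derivative. Carried out properly, you commute $s$ angular derivatives through $B_{\eta_1}$ by Lemma~\ref{lem:commuting_angular} and pay for the remaining $2-s$. This gives $\|B_{\eta_1}h\|_{\wH^2}\lesssim\delta^{s-2}\|h\|_{\wH^s}$ for $0<s<2$.

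This loss cannot be avoided. Take a bump $h$ at geometric unit scale with $B_{\eta_1}h\neq0$. Then $\|h\|_{\wH^s}\sim\delta^{-s}\|h\|_{\wH^0}$. Meanwhile $B_{\eta_1}h$ is again supported at unit geometric scale, so $\delta^{-2}\|B_{\eta_1}h\|_{\wH^0}\lesssim\|B_{\eta_1}h\|_{\wH^2}$. Hence the factor $\max\{1,\delta^s\}$, which equals $1$ for $\delta<1$, cannot be reached. The consistent factor is $\max\{1,\delta^{s-2}\}$. The paper's own appeal to Lemma~\ref{lem:commuting_angular} for a factor $\delta^{s}$ has the same defect.

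Your approach does not need this link. The chain $\|B_{\eta_2}B_{\eta_1}h\|_{\wH^{s+2}}\lesssim\|B_{\eta_1}h\|_{\wH^0}\lesssim\|h\|_{\wH^0}\le\|h\|_{\wH^s}$ holds for every $s\ge0$. It gives the outer inequality directly, and that is all that is used later.

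A smaller point concerns your last paragraph. In \eqref{eqn:ang_aug_sys} the $\delta^{-2}$ coefficients $\sinh^2(r)g_{AA}f$ and $\sinh(r)\cosh(r)g_{AA}\alpha_r$ do enter the homogeneous system. Formula \eqref{eqn:b_eta_2} has $O(1)$ coefficients only for two reasons: $\cosh^2r-\sinh^2r=1$, and the datum at $y_1$ is recorded through $\pdv_A\alpha_A$ rather than through $(f,\alpha_r)$. No such cancellation happens for the momentum constraint. There the transfer from $Y_r(y_1)$ to $Y_A(y)$ keeps a factor $\sinh(r)\cosh(r)\sim\delta^{-2}$, which is why Proposition~\ref{prop:momentum_second_op_est} only gets $\delta^{-2}$. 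So the momentum components are not handled the same way.
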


\begin{proof}[Proof of Lemma~\ref{lem:remainder_second_operator}]
For a bound on $B_{\eta_2}$, using the substitution $t = A(y), 1 = A(y_1)$ to represent everything in the angular coordinates and Equation~\ref{eqn:b_eta_2}, observe that $b_{\eta_2}(\cdot, y) = \delta_{y_1} - c_1 \pdv_{A(y_1)} \delta_{y_1} + c_2 \pdv_{A(y_1)}^2 \delta_{y_1}$; where the second and third coefficients are labeled $c_1$ and $c_2$, respectively. 
Explicitly, then, the scalar function $(B_{\eta_2}B_{\eta_1}h)(y)$ is given by
\[\int \eta_2(y_1) [(B_{\eta_1}h)(y_1) -  c_1 \pdv_{A(y_1)} (B_{\eta_1}h)(y_1) + c_2\pdv_{A(y_1)}^2 (B_{\eta_1}h)(y_1) ] \dd y_1. \]
Since $|\pdv_{A(y)}^k(B_{\eta_2}B_{\eta_1})h(y)| \lesssim \left| \int \eta_2(y_1) (g_{AA}^{(k-1)/2} + g_{AA}^{(k-2)/2})   \dd y_1 \right|$, applying Sobolev embedding to $\|\eta\|_{L^\infty}$ again and using that $\sqrt{g_{AA}} \sim O(1)$ gives us the estimate
\[\|\pdv_{A(y)}^k B_{\eta_2,f}^{(A)}B_{\eta_1,f}h\|_{L^2} \lesssim \|B_{\eta_1,f}h\|_{\wH^{2}},\]
for any $k > 2$, with no fators of $\delta$ whatsoever.

By the fact that the connection defined by the homogeneous augmented system is flat, the path for the operator $B^*$ is irrelevant and hence for simplicity we can take a composition of several legs, each traversing a single angular component.
Combining all legs of the path together, starting first with the radial and passing over all angular components, we obtain a structure of remainder terms of the form
\[B_{\eta_2,f}B_{\eta_1}h \coloneqq B_{\eta_2,f}^{(r)}B_{\eta_1,f}h + B_{\eta_2,f}^{(A)}B_{\eta_2,f}^{(r)}B_{\eta_1,f}h + \dots + B_{\eta_2,f}^{(B'_{d-2})}\cdots B_{\eta_2,f}^{(A)}B_{\eta_2,f}^{(r)}B_{\eta_1,f}h.\]
The radial path and its derivatives are $O(1)$ by construction, whereas the $A$ and $B'$ derivatives all satisfy this precise structure, falling only one one particular term as the rest are absorbed in the $y_1$-dependent source.

Hence, for any $s > 2$ we get an estimate 
\[\|B_{\eta_2,f}B_{\eta_1}h\|_{\wH^{s+2}} \lesssim \|B_{\eta_1}h\|_{\wH^{s}} \lesssim \|h\|_{\wH^{s}};\]
in other words, we gained all of the regularity improvement from the our smooth functions $c_1$ and $c_2$, and there is no factor of $\delta$.

For the second statement, we combine the above with the fact that $\|B_{\eta_1}h\|_{\wH^2} \lesssim \delta^{s}\|h\|_{\wH^{s}}$ whenever $0 < s < 2$ by Lemma~\ref{lem:commuting_angular}.
\end{proof}

Similarly, the commutator estimate also produces an $O(1)$ bound on independent terms as $\sqrt{g_{AA}} \sim O(1)$:
\[\|[B_{\eta_2,f}, \pdv_{A(y)}]B_{\eta_1}h\|_{\wH^{s+2}} \lesssim \|B_{\eta_1}h\|_{\wH^{s}} \lesssim \|h\|_{\wH^{s}};\]
provided that $s > 3$; for lower regularity, we still need to use the first solution operator to obtain factors of $\delta$.



Next, let us consider the \textbf{Momentum constraint} equations. 

Reducing the system for $Y_r$ yields
\begin{align*}
\frac{d^2}{dt^2} Y_r &= \Omega^2( - \nabla_A (P^*_M Y)_{Ar} + \nabla_R (P^*_M Y)_{AA} + \sinh^2(r)g_{AA} Y_r \\
&- \cosh^2(r)g_{AA}Y_r -\frac{1}{\Omega} \frac{d}{dt} (P^*_M Y)_{Ar} - \coth(r) (P^*_M Y)_{AA}).\\
&=\Omega^2(-g_{AA} Y_r - \nabla_A (P^*_M Y)_{Ar} + \nabla_r (P^*_M Y)_{AA} + \frac{1}{\Omega} \frac{d}{dt} (P^*_M Y)_{Ar} - \coth(r) (P^*_M Y)_{AA}).
\end{align*}

As before, setting our frequency to be $\omega_M = \Omega \sqrt{g_{AA}}$, our particular solution is given by 
\[\int_t^{d(y_1,y)} \frac{\sin(\omega_M (t-s))}{\omega_M} \Omega^2 \left( \left(\frac{1}{\Omega} \frac{d}{ds}-\nabla_A\right) (P^*_M Y)_{Ar} - (\coth(r)-\nabla_r)(P^*_M Y)_{AA}\right)(s) \dd s. \]
with homogeneous solution
\[\cos(\omega_M(t-1))Y_r(y_1) + \frac{\sin(\omega_M(t-1))}{\omega_M}Y_r'(y_1).\]

Using the definitions $\nabla_A (P^*_M Y)_{Ar} = \pdv_A(P^*_M Y)_{Ar} - \Gamma^r_{AA} (P^*_M Y)_{rr} - \Gamma^A_{Ar}(P^*_M Y)_{AA}$ and  \[\nabla_r (P^*_M Y)_{AA} = \pdv_r(P^*_M Y)_{AA} - 2\Gamma^A_{rA} (P^*_M Y)_{AA},\] 

we find that for our unsmoothed operator $S$ and tensor $h^r \in \wH^s$, we have components
\begin{align*}
(S B_{\eta_1}h^r)^{rr}(w) &= \int_t^1 -\Omega^2\sinh(r)\cosh(r)g_{AA} \frac{K(s,w,y_1)}{1-s} B_{\eta_1}h^r\left(r,\frac{A(w)-sA(y_1)}{1-s},B'\right)ds ,  \\
(S B_{\eta_1}h^r)^{Ar}(w) &= \int_t^1 -\Omega^2\left(\frac{2}{\Omega}\pdv_{s}\right)K(s,w,y_1)B_{\eta_1}h^A\left(r,\frac{A(w)-sA(y_1)}{1-s},B'\right)\frac{1}{1-s}ds  \\
&+ \delta_{w} \int \Omega \left(\frac{\sin(\sqrt{g_{AA}}(A(y)-A(w))(t-1))}{\omega_M}\right) B_{\eta_1}h(y) \dd y. \\
(S B_{\eta_1}h^r)^{AA}(w) &= \int_t^{1} -\Omega^2 ( \pdv_r + 2\coth(r))K(s,w,y_1) B_{\eta_1}h^A \left(r,\frac{A(w)-sA(y_1)}{1-s},B'\right) \frac{1}{1-s}  \dd s,  \\
\end{align*}
where $K(s,w,y_1) \coloneqq \frac{\sin(\omega_M (t-s))}{\omega_M}$.

Again, the boundary term is infinitely smoothing and uniformly bounded as $O(1)$ and can hence be ignored.

Using the equation for $Y_A$, we can close the system exclusively in $Y_A$, which becomes the third order equation
\begin{align*}
\pdv_t^3 Y_A + \Omega^2 \pdv_t Y_A &= \sinh(r)\cosh(r)g_{AA}(\pdv_t^2(P^*_M Y)_{AA} + \Omega^2 ((P^*_M Y)_{AA} + \pdv_r (P^*_M Y)_{AA} \\
&- 2\pdv_A (P^*_M Y)_{Ar} - 2\coth(r) (P^*_M Y)_{AA} -\sinh(r)\cosh(r)g_{AA}(P^*_M Y)_{rr})).
\end{align*}
Solving for $\pdv_t Y_A$ and integrating in t yields particular solution
\[\int_0^{1} \left( \frac{-\Omega( \cos(\omega_R (s))-1)}{\omega_R^2} \right)\Ncal((P^*_Rf)_{AA}, (P^*_Rf)_{Ar})(x(y,y_1,s)) \dd s.\]

So for tensor $h^A \in \wH^s$, the unsmoothed operator $S$ has components

\begin{align*}
(S B_{\eta_1}h^A)^{rr}(w) &= \int_t^1 -\Omega^2(\sinh(r)\cosh(r)g_{AA})^2 \frac{K(s,w,y_1)}{1-s} B_{\eta_1}h^r\left(r,\frac{A(w)-sA(y_1)}{1-s},B'\right)ds ,  \\
(S B_{\eta_1}h^A)^{Ar}(w) &= \int_t^1 2 \sinh(r)\cosh(r)g_{AA}\frac{\pdv_{s}^2 K(s,w,y_1)}{1-s}B_{\eta_1}h^A\left(r,\frac{A(w)-sA(y_1)}{1-s},B'\right)ds  \\
&+ \text{bdry terms}. \\
(S B_{\eta_1}h^A)^{AA}(w) &= \int_t^{1} -\sinh(r)\cosh(r) \Omega^2 \frac{( \pdv_r + 2\coth(r) + \frac{\pdv_s^2}{\Omega^2}) K(s,w,y_1)}{1-s} \\
&\cdot B_{\eta_1}h^A \left(r,\frac{A(w)-sA(y_1)}{1-s},B'\right)  \dd s + \text{bdry terms},  \\
\end{align*}
where $K(s,w,y_1) = \left( \frac{-\Omega( \cos(\omega_R (s))-1)}{\omega_R^2} \right)$.

Note that the boundary terms are treated identically to the earlier cases, with one notable exception: integrating by parts creates terms of the form
\[\int \eta(w) \pdv_s (P^*_M Y)_{Ar}(w) \int \Omega \pdv_s K(1,y,w) \dd y \dd w  \]
and likewise with $(P^*_M Y)_{AA}$.
Such terms can be handled by replacing $\pdv_s$ with $-\Omega \pdv_{A(y)} = \Omega \pdv_{A(w)}$, which can be safely integrated by parts onto $\eta$ with no boundary terms, and we obtain the same infinitely smoothing estimate as before.

Thus we can once again ignore the boundary terms for the remainder of this discussion.

We now present the relevant bounds for the solution operator and remainder.

\begin{proposition}\label{prop:momentum_second_op_est}
The smoothed operator $S^2_Y$ as above defined by integrating $\eta_2(y_1)$ against $S$ 
and the remainder $B_{\eta_2,Y}$ given by
\begin{align*}
B_{\eta_2}B_{\eta_1}h^r(y) &= \int \eta_2(y_1) [\cos(\omega_M(t-1)) B_{\eta_1}h^r(y_1) - \frac{\sin(\omega_M(t-1))}{\omega_M} \pdv_s B_{\eta_1}h^r(y_1)] \dd y_1 \\
B_{\eta_2}B_{\eta_1}h^A(y) &= \int \eta_2(y_1) -\Omega \sinh(r)\cosh(r) \left(\frac{\sin(\omega_M)}{\omega_M}B_{\eta_1}h^A(y_1) + \frac{\cos(\omega_M)-1}{\omega_M^2}\pdv_s B_{\eta_1}h^A(y_1) \right) \dd y_1
\end{align*}

satisfy the bounds
\[\|S^2_YB_{\eta_1}h\|_{\wH^{s+1}} \lesssim \delta^{-4} \|B_{\eta_1}h\|_{\wH^{s}}\]
for every $s > 0$ and 
\[\|B_{\eta_2}B_{\eta_1}h\|_{\wH^{s+1}} \lesssim \delta^{-2}\|B_{\eta_1}h\|_{\wH^{s}} \lesssim \delta^{-2}\|h\|_{\wH^s}\]
for every $s \geq 1$.
\end{proposition}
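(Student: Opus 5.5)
The plan is to follow the Hamiltonian template exactly: estimate each component of $S^2_Y B_{\eta_1}h$ by combining Lemma~\ref{lem:averaging_sobolev_estimate}, which gives $L^2$ and same-level bounds with the $\delta$-power read off from the $L^\infty$ size of the kernel prefactor, with Lemma~\ref{lem:fio_estimate}, which supplies the regularity gain. Concretely, for each component $(rr),(Ar),(AA)$ listed above, and for both source types $h^r$ and $h^A$, I would identify the kernel as $P(K^{(n)}(s,w,y_1))$ times an explicit geometric prefactor. The prefactor is a product of powers of $\sinh(r)\cosh(r)\sim\delta^{-2}$, $\coth(r)\sim 1$ and $g_{AA}\sim 1$. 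I would also record the power of $s$ by which the kernel vanishes at $s=0$.

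For $h^r$ the worst prefactor is a single $\sinh(r)\cosh(r)$, in the $rr$ component. For $h^A$ it is $(\sinh(r)\cosh(r))^2\sim\delta^{-4}$, again in the $rr$ component. Lemma~\ref{lem:averaging_sobolev_estimate} then gives $\|S^2_YB_{\eta_1}h\|_{\wH^s}\lesssim\delta^{-4}\|B_{\eta_1}h\|_{\wH^s}$. To obtain the angular gain of one derivative, I would check that each kernel ($\sin(\omega_M s)/\omega_M$, $(\cos(\omega_M s)-1)/\omega_M^2$ and their $s$-derivatives) vanishes at least to first order at $s=0$. One can then write it as $s^k L$ with $k\ge 0$ and $L$ smooth and oscillatory with $O(1)$ frequency, and Lemma~\ref{lem:fio_estimate} gives a gain of $k+1\ge 1$ angular derivatives with no extra $\delta$. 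The radial gain comes from the first operator and the radial ODE, which are $O(1)$ by Lemma~\ref{lem:commuting_angular} and the appendix computations of \cite{isett2025integral}.

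The tensor weights in \eqref{eqn:tensor_norm} need to be tracked. A raised angular index contributes a factor of $\delta$, which partially offsets the $\sinh\cosh$ prefactors, so the maximum over components yields $\delta^{-4}$ and not worse. Boundary terms from integration by parts are dispatched as in the text: they are $O(1)$ and infinitely smoothing, after trading $\pdv_s$ for $\Omega\pdv_{A(w)}$ and integrating onto $\eta_2$.

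For the remainder $B_{\eta_2}B_{\eta_1}h$, I would argue as in Lemma~\ref{lem:remainder_second_operator}. The $y$-dependence enters only through the smooth coefficients $\cos(\omega_M)$, $\sin(\omega_M)/\omega_M$ and $(\cos(\omega_M)-1)/\omega_M^2$, whose $A(y)$-derivatives are uniformly $O(1)$ since $\sqrt{g_{AA}}\sim1$. The $y_1$-derivative $\pdv_s$ on $B_{\eta_1}h$ is moved onto $\eta_2$ by integration by parts, or absorbed using $s\ge1$ via Sobolev embedding of $\eta_2$ in $L^\infty$. The $h^r$ piece is then $O(1)$, and the $h^A$ piece carries the single prefactor $\Omega\sinh(r)\cosh(r)\sim\delta^{-2}$, which gives the $\delta^{-2}$ bound. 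Decomposing the path into legs, one per angular component as in Lemma~\ref{lem:remainder_second_operator}, handles all angular directions, and the final step $\|B_{\eta_1}h\|_{\wH^s}\lesssim\|h\|_{\wH^s}$ follows from the first-operator bounds.

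The main obstacle is the $(rr)$ component for source $h^A$ with its $(\sinh\cosh)^2$ factor. Here I must make sure the tensor weighting does not introduce a further loss; the $1/(1-s)$ singularities are already handled by the change of variables inside the lemmas. I must also make sure the third-order $Y_A$ equation still produces a kernel vanishing at $s=0$, so that Lemma~\ref{lem:fio_estimate} applies. I would check this first by expanding $(\cos(\omega s)-1)/\omega^2=-s^2/2+O(s^4)$ and its $s$-derivatives.
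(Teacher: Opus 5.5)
Your treatment of $S^2_Y$ matches the paper. Lemma~\ref{lem:averaging_sobolev_estimate} produces the $\delta^{-4}$ from the $(\sinh(r)\cosh(r))^2$ prefactor in the $rr$ component for source $h^A$. Lemma~\ref{lem:fio_estimate}, together with the radial gain from the first operator, then recovers one derivative with no further loss. One correction: you do not need the kernels to vanish at $s=0$. The paper gains exactly one derivative because the $Ar$ kernel, an $\pdv_s K$ term of cosine type, carries no factor of $s$, so $k=0$. Your planned check that every kernel vanishes to first order would in fact fail for that component. This is harmless, since $k\ge 0$ already gives a gain of $k+1=1$. The same applies to your closing worry about the third-order $Y_A$ equation.

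The genuine gap is in the remainder bound. You propose to cover all angular directions by splitting the path into legs as in Lemma~\ref{lem:remainder_second_operator}:
\[B_{\eta_2,Y}B_{\eta_1}h = B^{(r)}_{\eta_2,Y}B_{\eta_1,Y}h + B^{(A)}_{\eta_2,Y}B^{(r)}_{\eta_2,Y}B_{\eta_1,Y}h+\dots+B^{(B'_{d-2})}_{\eta_2,Y}\cdots B^{(A)}_{\eta_2,Y}B^{(r)}_{\eta_2,Y}B_{\eta_1,Y}h.\]
In the Hamiltonian case this works because each leg is $O(1)$. In the momentum case, each angular leg carries the prefactor $\Omega\sinh(r)\cosh(r)g_{AA}\sim\delta^{-2}$, which comes from the $Y_r\to Y_A$ coupling through $\Gamma^r_{AA}$. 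So your statement that the $h^A$ piece carries a single such prefactor holds only leg by leg. The naive chain of estimates loses $\delta^{-2}$ per angular leg, up to $\delta^{-2(d-1)}$ in the last term, rather than the claimed $\delta^{-2}$.

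The missing idea is path independence. The hyperbolic metric has constant curvature, and the homogeneous augmented system defines a flat connection, since the system is completely integrable. Hence the transport from $y_1$ to $y$, and with it the remainder $\langle b_{\eta_2,Y}(\cdot,y),\cdot\rangle$, does not depend on the chosen path. You can therefore replace the concatenation of coordinate legs by a single straight angular path from each intermediate point directly to $y_1$. Then only one $\sinh(r)\cosh(r)$ factor appears, and the whole remainder obeys the one-leg bound $\delta^{-2}$, as the paper argues. Without this step your argument does not give the stated estimate for $B_{\eta_2,Y}$.
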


\begin{proof}
Observe that $K(s) \sim O(1)$ and $\pdv_s K(s) \sim O(1)$, just as in the Hamiltonian constraint equation case.
In addition, up to leading order in $\delta$, the term $\pdv_r K(s) = \frac{(t-s)\sinh(2r)\cos(\omega_M(t-s))}{\omega_M} \sim O(1)$.

The $K(s)$ term are $O(1)$ and they are associated to at least one angular component of $SB_{\eta_1}h$.
The most problematic term is the $rr$ component of the operator on $h^A$, which contributes a factor of $\delta^{-4}$ from the $(\sinh(r)\cosh(r))^2$ factor.
By, Lemma~\ref{lem:averaging_sobolev_estimate} we therefore have
\[\|S^2_YB_{\eta_1}h\|_{\wH^{s+1}} \lesssim  \delta^{-4} \|B_{\eta_1}h\|_{\wH^{s+1}} \lesssim \delta^{-5} \|B_{\eta_1}h\|_{\wH^{s+1}}\]
for every $s > 0$.
Notably, the only non-negligible contribution is from the $rr$-component.

Given that the frequency is also $O(1)$, we can apply Lemma~\ref{lem:fio_estimate} and the radial regularity improvement of $1$ in the momentum constraint case with no factors of $\delta$ to upgrade this to an estimate 
\[\|S^2_YB_{\eta_1}h\|_{\wH^{s+1}} \lesssim  \delta^{-4} \|B_{\eta_1}h\|_{\wH^{s}}\]
where we gain only one regularity due to the $Ar$ component appearing with no factors of $s^k$.
This saves one factor of $\delta$, and produces the desired estimate.

Moreover, the trick from Lemma~\ref{lem:averaging_sobolev_estimate} tells us that $([S^2_Y, \pdv_{A(w)}]B_{\eta_1}h^r)^{rr}$ is given by
\[\int \pdv_{A(y_1)}\eta_2(y_1) \int_t^{1} \Omega^2 (-\sinh(r)\cosh(r))K(s,w,y_1) B_{\eta_1}h^A \left(r, \frac{(A(w) - sA(y_1))}{1-s},B'\right) \frac{1}{1-s}  \dd s \dd y_1,\]
namely the same solution operator expression with a derivative harmlessly falling on $\eta_2$.
Likewise, the other operator components satisfy precisely the same structure; moreover, $B'$ and $r$ derivatives pass through the solution operator and are killed by the commutator.
The $h^A$ solution operator has the exact same structure, but with a factor of $(\sinh(r)\cosh(r))^2$.

We therefore obtain the same $O(1)$ estimate
\[\|[S^2_Y,\pdv_{\omega}]B_{\eta_1}h\|_{\wH^{s}} \lesssim \delta^{-4}\|B_{\eta_1}h\|_{\wH^{s}}.\]

To obtain the bound on $B_{\eta_2}$, we follow the same strategy as in the proof of Lemma~\ref{lem:remainder_second_operator}, and hence because each derivative only falls on a single remainder term, it suffices to consider one particular direction $A$. 
We have \[\langle b_{\eta_2,Y},Y_r \rangle = \cos(\omega_M(t-1))Y_r(y_1) - \frac{\sin(\omega_M(t-1))}{\omega_M}Y_r'(y_1).\]
Denoting the coefficients by $c_1$ and $c_2$ as before, we find
\[B_{\eta_2}B_{\eta_1}h^r(y) = \int \eta_2(y_1) [c_1 B_{\eta_1}h^r(y_1) - c_2 \pdv_s B_{\eta_1}h^r(y_1)] \dd y_1.\]
Then as $\pdv_{A(y)}B_{\eta_2}B_{\eta_1}h^r(y_1) = \int \eta_2(y_1) \pdv_{A(y)} c_1 (B_{\eta_1}h^r(y_1)) - \pdv_{A(y)} c_2 (\pdv_s B_{\eta_1}h^r(y)) \dd y_1$ and observing that as in the Hamiltonian constraint case, both $c_1$ and $c_2$ and their derivatives are uniformly bounded and $O(1)$, applying Sobolev embedding to $\eta_2$ and using $\sqrt{g_{AA}} \sim O(1)$ gives

\[|\pdv_{A(y)}^k B_{\eta_2}B_{\eta_1}h| \lesssim \|\eta_2\|_{\wH^{d/2}} \|\pdv_{A(y)}^k(c_1 + c_2)\|_{L^{\infty}} \|B_{\eta_1}h\|_{\wH^1}.\]
Here, the derivatives of $c_1 + c_2$ are bounded of size $O(1)$ in $L^\infty$; the extra derivative on $c_2$ gives an estimate of the source in $\wH^1$.

Consequently, we find that 
\[\|B_{\eta_2}B_{\eta_1}h^r\|_{\wH^{s+1}} \lesssim \|B_{\eta_1}h^r\|_{\wH^{1}} \lesssim \|h^r\|_{\wH^1}.\]
with no factors of $\delta$ from our estimate, and the same level estimate for the first solution operator.

For the angular component $A$, we use $Y_A' = -\Omega( -(P^*_M Y)_{AA} - \sinh(r)\cosh(r)g_{AA}Y_r)$ and integrate. 
Excluding terms which are absorbed in the operator $S$ we have already bounded, we find 
\[\langle b_{\eta_2,Y},Y_A \rangle = -\Omega \sinh(r)\cosh(r)g_{AA} \left(\frac{\sin(\omega_M)}{\omega_M}Y_r(y_1) + \frac{\cos(\omega_M)-1}{\omega_M^2}Y_r'(y_1) \right).  \]
Both terms are of $O(1/\delta^2)$.

Hence for each leg, $B_{\eta_2,Y}^{(A)}$ satisfies 
\[\|B_{\eta_2,Y}^{(A)}B_{\eta_1,Y}h\|_{\wH^{s+1}} \lesssim \delta^{-2} \|B_{\eta_1,Y}h\|_{\wH^{1}} \lesssim \delta^{-2} \|h\|_{\wH^1}. \]
But attempting to use the chain of estimates on legs produces compounding factors of $\delta$, as 
\[B_{\eta_2,Y}B_{\eta_1}h \coloneqq B_{\eta_2,Y}^{(r)}B_{\eta_1,Y}h + B_{\eta_2,Y}^{(A)}B_{\eta_2,Y}^{(r)}B_{\eta_1,Y}h + \dots + B_{\eta_2,Y}^{(B'_{d-2})}\cdots B_{\eta_2,Y}^{(A)}B_{\eta_2,Y}^{(r)}B_{\eta_1,Y}h\]
would cause each term to contain this problematic $\sinh(r)\cosh(r)$ at every step.
However, recall that due to constant curvature and the structure of the equations, the transport and consequently the bounds themselves are path-independent, hence we can take a straight path directly from each intermediate point to $y_1$.

Consequently, the bound on the entire operator is the same as the bound on one leg, and hence
\[ \|B_{\eta_2,Y}B_{\eta_1,Y}h^A\|_{\wH^1} \lesssim \delta^{-2} \|B_{\eta_1,Y}h^r\|_{\wH^1} \]
and
\[ \|B_{\eta_2,Y}B_{\eta_1,Y}h\|_{\wH^{s+1}} \lesssim \delta^{-2} \|B_{\eta_1,Y}h\|_{\wH^{s}} \lesssim \delta^{-2} \|h\|_{\wH^s}. \]

For the commutator estimate, we can apply the same trick as for the operator $B_{\eta_2,Y}$ itself to find that the first term in $B_{\eta_2,Y}\pdv_{A(y)} B_{\eta_1}h - \pdv_{A(y)}B_{\eta_2,Y}B_{\eta_1}h$ costs one derivative on the source, while the latter remains $O(1)$.
Hence for $s > 2$,
\[  \|[B_{\eta_2,Y},\pdv_{\omega}]B_{\eta_1}h\|_{\wH^{s}} \lesssim  \delta^{-2} \|B_{\eta_1}h\|_{\wH^{{s+1}}} \lesssim \delta^{-3} \|h\|_{\wH^s}.  \]
Note that this is okay, because we do not expect to gain regularity for the commutator.


\end{proof}

Thus, we have solved $PS^2 h = B_{\eta_1}h - B_{\eta_2}B_{\eta_1}h$, as a second correction to the first $PS_1h = h - B_{\eta_1}h$ with operator bounds
\[\|S^2_f\|_{\wH^{s}\to \wH^{s+2}} \lesssim O(\delta^{-2}); \qquad \|B_{\eta_2,f}\|_{\wH^{2}\to \wH^{s+2}} \lesssim O(1)\] 
and 
\[\|S^2_Y\|_{\wH^{s}\to \wH^{s+1}} \lesssim O(\delta^{-4}); \qquad \|B_{\eta_2,Y}\|_{\wH^{1}\to \wH^{s+1}} \lesssim O(\delta^{-2}).\] 
with commutator bounds
\[\|[S^2_f, \pdv_{z^i}]\|_{\wH^{s}\to \wH^{s+1}} \lesssim O(\delta^{-2}); \qquad \|[B_{\eta_2,f},\pdv_{z^i}]\|_{\wH^{3}\to \wH^{s}} \lesssim O(1)\] 
and 
\[\|[S^2_Y, \pdv_{z^i}]\|_{\wH^{s}\to \wH^{s}} \lesssim O(\delta^{-4}); \qquad \|[B_{\eta_2,Y},\pdv_{z^i}]\|_{\wH^{2}\to \wH^{s}} \lesssim O(\delta^{-3}).\] 

It is worth noting that already, Theorem~\ref{thm:sol_op_5.15} and our explicit computation tell us $B_{\eta_2}B_{\eta_1}h$ is actually of the form 
\[\sum_{A \in \{1 \dots, \dim \ker P^*\}} w_A \langle Z^A, B_{\eta_1}h \rangle,\] 
where $w_A \in C_c^{\infty}(\supp \eta_2; \C^{2})$ are smooth functions and $Z^A$ are the cokernel elements.

But observe that $B_{\eta_1} = I-PS$, hence we have
\[w_A \langle Z^A, B_{\eta_1}h \rangle = w_A \langle Z^A, h \rangle.\]

There is a technical point, however, that because we split $A_{2/\delta}$ into two distinct regions to run the analysis, the cokernel elements on the top and bottom need not be compatible.
Thus we need to apply the theorem one more time.

\textbf{The Third Operator: Cokernel}
We now seek a solution to $PSh = B_{\eta_2}B_{\eta_1}h$, where we recall $\eta_2 = \eta_2^1 + \eta_2^2$ is supported in two patches of order $1/\delta$ on the top and bottom of $A_{2/\delta}$. 
We finish with a third application of the theorem as follows:

Define the coordinate transformation
\begin{equation}\label{eqn:stereo_proj_right}
\begin{aligned}
\Phi &\colon A_{\mathrm{right}} \to \hat{C} \\
\Phi(x',x_d) &= \left(\frac{x'}{||x||_2 + x_1}, ||x||_2 - \frac{1}{\delta} \right) \coloneqq (z',z_d).
\end{aligned}
\end{equation}
Where $A_{\mathrm{right}}$ is defined to be $\{x^1,\dots, x^d \in A: x^1 > -c\}$, $c >1$ is some positive constant of order $O(1/\delta)$, and $\hat{C}$ is another approximately unit cylinder rescaling of $A$.

Certainly, $\supp \eta_2 \subseteq A_{\mathrm{right}}$, hence one can again take a patch $B$ of size $1/\delta$ in a neighborhood of 
\[\left(\frac{1}{2}(\log(2/\delta) + \log(8/\delta)), 0,\dots, 0\right)\] 
and take paths $x(y,y_1,s)$ from any point $y \in \supp \eta_2$ to $y_1 \in B$.
These paths will again be a combination of angular and radial paths, with angular paths of geometric length $O(1/\delta)$.

Thus, we can then apply Theorem~\ref{thm:sol_op_5.15} and Lemmas~\ref{lem:averaging_sobolev_estimate} and \ref{lem:fio_estimate} once more, observe that $B_{\eta_2}$ and $B_{\eta_1}$ do not affect the inner product, and conclude that we have finitely many cokernel elements
\[w_A \langle Z^A, B_{\eta_1}h \rangle = w_A \langle Z^A, h \rangle,\]
where now $Z^A$ are defined globally on $A$ without compatibility concerns.
There is no issue using the gnomonic coordinates in Lemma~\ref{lem:fio_estimate} because they must hold for a fixed $y$; even though we are taking a region larger than a hemisphere, the paths do not traverse the entire length.

Hence in our third operator step we obtain precisely the same bounds as in the case of the second solution operator:

\[\|S^3_f\|_{\wH^{s}\to \wH^{s+2}} \lesssim O(\delta^{-2}); \qquad \|S^3_Y\|_{\wH^{s}\to \wH^{s+1}} \lesssim O(\delta^{-4})\] 
with commutator bounds
\[\|[S^3_f, \pdv_{z^i}]\|_{\wH^{s}\to \wH^{s+1}} \lesssim O(\delta^{-2}); \qquad \|[S^3_Y, \pdv_{z^i}]\|_{\wH^{s+1}\to \wH^{s}} \lesssim O(\delta^{-4}).\] 

With our bounds for each individual operator complete, we can finally finish the proof of the solution operator bound.

\begin{proof}[Proof of Proposition~\ref{prop:soln_op_bounds}]
With our three solution operators, we have 
\begin{align*}
P_R(S^1_f+S^2_f+S^3_f)h &= h - w_A \langle Z^A,h \rangle \\
P_M(S^1_Y+S^2_Y+S^3_Y)h^i &= h^i - w_A \langle Z^A,h^i \rangle \\
\end{align*}
and the bounds on the entire solution operator $S = (S_f, S_Y) = (S^1_f+S^2_f+S^3_f,S^1_Y+S^2_Y+S^3_Y)$ are simply the maximum of each individual operator.
Combining the estimates obtained from Lemma~\ref{lem:commuting_angular}, Lemma~\ref{lem:fio_estimate} and Proposition~\ref{prop:momentum_second_op_est}, we find 
\[\|S_f\|_{\wH^{s}\to \wH^{s+2}} \lesssim O(1/\delta^2) \qquad \|S_Y\|_{\wH^{s}\to \wH^{s+1}} \lesssim O(1/\delta^4),\] 
completing the proof.
\end{proof} 

In addition, the remainders $w_A \langle Z^A, \cdot \rangle$ satisfy 
\[\|w_A \langle Z^A, g \rangle \|_{\wH^s} \lesssim \|w_A \|_{\wH^s} |\langle Z^A, g \rangle| \lesssim \|w_A \|_{\wH^s} | \langle Z^A, g \rangle|\]
where the latter is given by integration of $\eta_{3}$ against uniform $O(1)$ coefficients applied to derivatives of $g$, which can be integrated by parts onto the smoothing at no cost.
By construction, the functions $w_A$ are of sinusoidal form of $\sin(\omega), \cos(\omega)$ with rescaling by metric components; they are uniformly $O(1)$ in the Hamiltonian constraint case, while they are uniformly $O(\delta^{-2})$ in the momentum constraint case.

\subsection{Fixed Point Iteration}
\textbf{Step 4}

With solution operator bounds obtained, we are ready to begin the first fixed point argument. 
For convenience, we will first write the equations on $A$ equipped with the anisotropic Sobolev norms $\wH^s$, then translate to the unit scale picture $C$ with standard $H^s$ norm and run the fixed-point argument there where estimates are easier. 
Recall that $A$ is the euclidean annular approximation of $A_{2/\delta}$ and $\widetilde{A}$ is the approximation of $\widetilde{A}_{2/\delta}$.

In this case, Equation~\eqref{eqn:constraints} becomes

\begin{equation}\label{eqn:constraints_thinx}
\begin{aligned}
\begin{cases}
\pdv_{x^i} \pdv_{x^j} \sigma^{ij} - \Gamma^k_{ij}\pdv_k \sigma^{ij} - b_{ij} \sigma^{ij} = M^{(0)}_{\sigma}(\pi,\pi) + M^{(1)}_{\sigma}(\pdv_x \sigma, \pdv_x \sigma) + M^{(2)}_{\sigma}(\sigma, \pdv_x^2 \sigma) \\
\pdv_{x^i} \pi^{ij} + \Gamma^i_{ik}\pi^{kj} + \Gamma^j_{ik} \pi^{ik} = N^{(1)j}_{\sigma}(\sigma, \pdv_x \pi) + N^{(0)j}_{\sigma}(\pdv_x \sigma, \pi)
\end{cases}
\end{aligned}   
\end{equation}
where the Christoffel symbols and metric are of size $O(1)$ in the x coordinates.

Let us begin with a naive first trial for our initial data set.
Fix $\chi(r)$ to be a smooth radial function which is $0$ for $r < \frac{1}{\delta}$ and $1$ for $r > \frac{1}{\delta} + 1$.

As in \cite{mao2023}, we will let $(g_{in; Q}, k_{in;Q}, g_{out;Q}, k_{out;Q})$ either refer to $(\cg,\ck)$ or $(g_Q, k_Q)$ for interior and exterior gluing, respectively. 
We will do the same for $(\sigma_{in;Q}, \pi_{in;Q}, \sigma_{out;Q}, \pi_{out;Q})$.

Our first naive guess will be 
\begin{equation}\label{eqn:naive_guess}
    (\bsigma, \bpi) = (1-\chi)((\sigma_{in;Q}, \pi_{in;Q}) + \chi (\sigma_{out;Q}, \pi_{out;Q}).
\end{equation}
Naturally, this will not be a solution to Equation~\eqref{eqn:constraints_thinx}.
Instead, we wish to find a corrected solution $(\tsigma_Q,\tpi_Q)$ supported in $A$ and a choice of $Q \in \Qcal$ so that $(\sigma, \pi) = (\bsigma_Q+\tsigma_Q, \bpi_Q+\tpi_Q)$ is actually a solution.

So we see that, analogously to \cite{mao2023}, $(\tsigma_Q, \tpi_Q)$ must solve 
\begin{equation}\label{eqn:fixed_point_correction_ham}
\begin{aligned}
\pdv_{x^i} \pdv_{x^j} \tsigma^{ij}_Q - \Gamma^k_{ij}\pdv_{x^k} \tsigma^{ij}_Q - b_{ij} \tsigma^{ij}_Q &= F_Q + M^{(0)}_{\bsigma_Q + \tsigma_Q}(\bpi_Q + \tpi_Q,\bpi_Q+\tpi_Q) - M^{(0)}_{\bsigma_Q}(\bpi_Q,\bpi_Q) \\
&+ M^{(1)}_{\bsigma_Q + \tsigma_Q}(\pdv_x (\bsigma_Q + \tsigma_Q), \pdv_x (\bsigma_Q + \tsigma_Q)) - M^{(1)}_{\bsigma_Q}(\pdv_x (\bsigma_Q), \pdv_x (\bsigma_Q)) \\
&+ M^{(2)}_{\bsigma_Q + \tsigma_Q}(\bsigma_Q + \tsigma_Q, \pdv_x^2 (\bsigma_Q + \tsigma_Q)) - M^{(2)}_{\bsigma_Q}(\bsigma_Q, \pdv_x^2 (\bsigma_Q)),
\end{aligned}
\end{equation}

and 

\begin{equation}\label{eqn_fixed_point_correction_mom}
\begin{aligned}
\pdv_{x^i} \tpi^{ij}_Q + \Gamma^i_{ik}\tpi^{kj}_Q + \Gamma^j_{ik} \tpi^{ik}_Q &= G_Q + N^{(1)j}_{\bsigma_Q + \tsigma_Q}(\bsigma_Q + \tsigma_Q, \pdv_x (\bpi_Q + \tpi_Q)) - N^{(1)j}_{\bsigma_Q}(\bsigma_Q, \pdv_x (\bpi_Q)) \\
&+ N^{(0)j}_{\bsigma_Q + \tsigma_Q}(\pdv_x (\bsigma_Q + \tsigma_Q), \bpi_Q + \tpi_Q) - N^{(0)j}_{\bsigma_Q}(\pdv_x (\bsigma_Q), \bpi_Q).
\end{aligned}
\end{equation}

Here, $F_Q$ and $G_Q$ are the errors from plugging the naive solution into Equation~\eqref{eqn:constraints_thinx}, given explicitly by
\begin{align*}
F_Q &= -\pdv_{x^i} \pdv_{x^j} \bsigma^{ij}_Q + \Gamma^k_{ij}\pdv_{x^k} \bsigma^{ij}_Q + b_{ij} \bsigma^{ij}_Q + M^{(0)}_{\bsigma_Q}(\bpi_Q,\bpi_Q) \\
&+ M^{(1)}_{\bsigma_Q}(\pdv_x (\bsigma_Q), \pdv_x (\bsigma_Q)) + M^{(2)}_{\bsigma_Q}(\bsigma_Q, \pdv_x^2 (\bsigma_Q))\, \\
G_Q &= -\pdv_{x^i} \bpi^{ij}_Q - \Gamma^i_{ik}\bpi^{kj}_Q - \Gamma^j_{ik} \bpi^{ik}_Q + N^{(1)j}_{\bsigma_Q}(\bsigma_Q, \pdv_x (\bpi_Q)) + N^{(0)j}_{\bsigma_Q}(\pdv_x (\bsigma_Q, \bpi_Q)).
\end{align*}

Then denoting the nonlinearities by $M_Q$, $N_Q$, we see that $(\tsigma_Q, \tpi_Q)$ must solve the fixed point problem
\begin{equation}\label{eqn:fixed_point_sigmapi}
(\tsigma_Q, \tpi_Q) = (S_f(F_Q + M_Q), S_{Y}(G_Q + N_Q) ).
\end{equation}

Now let us translate to the unit-scale picture.
Recall that for the angular components, $z' \approx \delta x'$. 
Hence, $dz' = \delta dx'$, and $\pdv_{z'} = \frac{1}{\delta} \pdv_{x'}$.
Consequently, $\sigma^{z_i'z_j'} = \delta^2 \sigma^{x_i'x_j'}$. 
Thus, while the equations in the $z$ coordinates appear the same:
\begin{equation}
\begin{aligned}
\pdv_{z^i} \pdv_{z^j} \tsigma^{ij}_Q - \Gamma^k_{ij}\pdv_{z^k} \tsigma^{ij}_Q - b_{ij} \tsigma^{ij}_Q &= F_Q + M^{(0)}_{\bsigma_Q + \tsigma_Q}(\bpi_Q + \tpi_Q,\bpi_Q+\tpi_Q) - M^{(0)}_{\bsigma_Q}(\bpi_Q,\bpi_Q) \\
&+ M^{(1)}_{\bsigma_Q + \tsigma_Q}(\pdv_z (\bsigma_Q + \tsigma_Q), \pdv_z (\bsigma_Q + \tsigma_Q)) - M^{(1)}_{\bsigma_Q}(\pdv_z (\bsigma_Q), \pdv_z (\bsigma_Q)) \\
&+ M^{(2)}_{\bsigma_Q + \tsigma_Q}(\bsigma_Q + \tsigma_Q, \pdv_z^2 (\bsigma_Q + \tsigma_Q)) - M^{(2)}_{\bsigma_Q}(\bsigma_Q, \pdv_z^2 (\bsigma_Q)),
\end{aligned}
\end{equation}
\begin{equation}
\begin{aligned}
\pdv_{z^i} \tpi^{ij}_Q + \Gamma^i_{ik}\tpi^{kj}_Q + \Gamma^j_{ik} \tpi^{ik}_Q &= G_Q + N^{(1)j}_{\bsigma_Q + \tsigma_Q}(\bsigma_Q + \tsigma_Q, \pdv_z (\bpi_Q + \tpi_Q)) - N^{(1)j}_{\bsigma_Q}(\bsigma_Q, \pdv_z (\bpi_Q)) \\
&+ N^{(0)j}_{\bsigma_Q + \tsigma_Q}(\pdv_z (\bsigma_Q + \tsigma_Q), \bpi_Q + \tpi_Q) - N^{(0)j}_{\bsigma_Q}(\pdv_z (\bsigma_Q), \bpi_Q).
\end{aligned}
\end{equation}
there are extra factors of $\frac{1}{\delta}$ in the nonlinear terms that need to be taken into account.

Notice that on the left-hand side, the only problematic Christoffel term the one of the form $\Gamma^r_{AA} = \sinh(r)\cosh(r)g_{AA}$, which only appear when contracting against two angular components and we lose two factors of $\delta$.
However, this is to be expected; the $\delta^{-2}$ loss appears in both the $x$ and $z$ coordinates, and is the reason why we need a hierarchy on our estimates in moving from $\bsigma,\bpi$ to $F_Q, G_Q$, specifically to handle the $AA$ components of the tensors.

To obtain the sharpest conditions possible, we need to keep track of components.
Observe that at the level of the solution operator directly, we have proved the following bounds:

\[\begin{cases}
||(S_f)^{rr}||_{\wH^s \to \wH^{s+2}} \lesssim \delta^{-2} \\
||(S_f)^{Ar}||_{\wH^s \to \wH^{s+2}} \lesssim \delta^{-2} \\
||(S_f)^{AA}||_{\wH^s \to \wH^{s+2}} \lesssim O(1) \\
\end{cases}
\qquad
\begin{cases}
||(S_Y)^{rr}||_{\wH^s \to \wH^{s+1}} \lesssim \delta^{-4} \\
||(S_Y)^{Ar}||_{\wH^s \to \wH^{s+1}} \lesssim \delta^{-2} \\
||(S_Y)^{AA}||_{\wH^s \to \wH^{s+1}} \lesssim \delta^{-2} \\
\end{cases} \]

When we convert from the $x$ coordinates to the $z$ coordinates on the right hand side, we lose the number of factors of $\delta$ corresponding to the number of raised angular components in the product of the nonlinear terms; for example, $T_{ABCD}^{rr} \pdv_r \pdv_r \sigma^{AB}\sigma^{CD}$, where $r,A$ are understood in $z$ coordinates $\pdv_{z_d} \pdv_{z_d} \sigma^{z_i'z_j'} \sigma^{z_k'z_l'}$ would cost $T_{ABCD}^{rr} \sim \delta^{-4}$ due to four raised angular components. 

Likewise, for the momentum constraint, the worst conversion would cost $\delta^{-3}$ from a term $\pdv_{z_d}\sigma^{z_i'z_j'} \pi^{z_k'z_l'}$.
However, the componentwise solution operator bounds are very useful here; by setting separate bounds for what components should be, we can improve our fixed point smallness estimates.

Indeed, the estimates above suggest we take 
\[ \|\tsigma^{AA}\|_{\wH^s} = \delta^2 \|\tsigma^{Ar,rr}\|_{\wH^s}; \qquad \|\tpi^{rr}\|_{\wH^s} = \delta^{-2}\|\tpi^{Ar,AA}\|_{\wH^s}\]
for all $s > 0$.
 
Thus, we set 
\[\begin{cases}
||\tsigma^{rr}||_{H^s(C)} \lesssim \delta^4 \ep \\
||\tsigma^{Ar}||_{H^s(C)} \lesssim \delta^4 \ep \\
||\tsigma^{AA}||_{H^s(C)} \lesssim \delta^6 \ep \\
\end{cases}
\qquad
\begin{cases}
||\tpi^{rr}||_{H^{s-1}(C)} \lesssim \delta^{3} \ep \\
||\tpi^{Ar}||_{H^{s-1}(C)} \lesssim \delta^{5} \ep \\
||\tpi^{AA}||_{H^{s-1}(C)} \lesssim \delta^{5} \ep \\
\end{cases} \]
Note that, after passing to unit scale coordinates $||\sigma||_{H^s}$ does not have the hierarchy for angular components in Definition~\ref{def:sobolev}, so we do not need to worry about derivatives or factors of $\delta$ per raised angular component.

We have two goals:
\begin{enumerate}
\item \label{goal_1} First, we wish to show that the map 
\[T \colon (\tsigma_Q^{(n)}, \tpi_Q^{(n)}) \mapsto (S_f(F_Q + M_Q^{(n)}), S_{Y}(G_Q + N_Q^{(n)}) )\]
maps a ball of suitably small size to itself. \item \label{goal_2} Second, we wish to show that the above map is a contraction.
\end{enumerate}

Our choices of the size of $\tsigma, \tpi$ will recover the desired factors of $\delta$ without requiring $\ep$ contains any dependence on $\delta$.
From the iteration scheme, if $||\tsigma^{AA}||_{H^s(C)} \lesssim \delta^6 \epsilon$, then a product of the form $\tsigma \cdot \tsigma$, losing $4$ factors of $\delta$ from the conversion, and $0$ for the $AA$ solution operator returns $\delta^6 \ep^2$, while losing $2$ for the $Ar$ and $rr$ components returns $\delta^4 \ep^2$.
Moreover, $\delta^4 \ep$ for the $rr,Ar$ component is optimal, as losing $2$ factors from the $Ar$ conversion and $2$ more from the solution operator indeed returns $\delta^4 \ep$.
By setting $||\tpi^{AA}||_{H^{s-1}(C)} \lesssim \delta^{5} \epsilon$, the product $\tsigma^{AB}\tpi^{CD}$ and losing $3+4$ factors to obtain $rr$ component returns $\delta^{4} \epsilon^2$.
The $\delta^3 \ep$ bound is also sharp, as a product $\tpi^{rr} \tpi^{rr}$ with no conversion nor solution operator loss needs to obtain $\delta^6 \ep$ for the $\tsigma^{AA}$ component.

\begin{remark}
Suppose we are in the time-symmetric case $k=0$, similar to that of \cite{chrusciel2007gluing}.
In this situation, there is no iteration scheme for $\pi$, and the smallness condition on $\sigma$ is $\|\bsigma\|_{H^s(C)} \lesssim \delta^6 \ep$, with this required hierarchy for components.
This estimate is to be directly compared to \cite[Corollary 3.5]{chrusciel2007gluing}, in which they obtain a solution operator bound of $\delta^{-4}$.
In this convention of norm, this would cost two extra factors of $\delta$, requiring an initial assumption of $\|\bsigma\|_{H^s(C)} \lesssim \delta^{8}\ep$, rather than $\delta^6\ep$.
\end{remark}

To accomplish goals \ref{goal_1} and \ref{goal_2}, we use the standard estimates as in \cite{mao2023}:

\begin{lemma}[Lemma 3.1 in \cite{mao2023}]\label{lem:moser_est}
Let $u,v$ be Schwarz functions on $\R^d$, $F \in C^{\infty}$ with bounded derivatives. 
Then for $s > 0$ we have 
\begin{enumerate}
\item \textbf{Moser Estimates.} $||F(u)-F(0)||_{H^s} \lesssim_{s,F,||u||_{L^{\infty}}} ||u||_{H^s}.$
\item \textbf{Moser Difference Estimates.}
\[||F(u)-F(v)||_{H^s} \lesssim_{s, F, ||u||_{L^{\infty}},||v||_{L^{\infty}}} ||u-v||_{H^s} + ||u-v||_{L^{\infty}}(||u||_{H^s} + ||v||_{H^s}).\]
\item \textbf{Product Estimates}. Fix $s_0, s_1, s_2$ such that $s_0 + s_1 + s_2 \geq d/2$, $s_0+s_1+s_2 \geq \max\{s_0,s_1,s_2\}$ with at least one of the two inequalities strict, then 
\[||u \cdot v||_{H^{-s_0}} \lesssim_{s_0,s_1,s_2} ||u||_{H^{s_1}} ||v||_{H^{s_2}}. \]
\end{enumerate}
\end{lemma}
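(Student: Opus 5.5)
These are classical estimates on $\R^d$, and I would prove all three with Littlewood--Paley theory, i.e.\ Bony's paraproduct decomposition. Let $P_j$ be the dyadic frequency projections and $S_j=\sum_{k<j}P_k$. Throughout I would use the characterization $\|u\|_{H^s}^2\simeq \|S_0u\|_{L^2}^2+\sum_j 2^{2js}\|P_ju\|_{L^2}^2$.

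\textbf{Moser estimate (1).} I would use the telescoping (Meyer) decomposition
\[F(u)-F(0)=F(S_0u)-F(0)+\sum_{j\geq 0}\big(F(S_{j+1}u)-F(S_ju)\big),\qquad F(S_{j+1}u)-F(S_ju)=m_j\,P_ju,\]
where $m_j=\int_0^1F'(S_ju+\tau P_ju)\,d\tau$. The key points are as follows.
\begin{itemize}
\item Since $\|S_ju\|_{L^\infty}\lesssim\|u\|_{L^\infty}$ and $F$ has bounded derivatives, the chain rule gives $\|\partial^\alpha m_j\|_{L^\infty}\lesssim_{\alpha,F,\|u\|_{L^\infty}}2^{j|\alpha|}$.
\item Hence each piece $m_jP_ju$ is essentially frequency-localized at scale $\lesssim 2^j$, with rapidly decaying tails at higher frequencies. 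Concretely, $\|P_k(m_jP_ju)\|_{L^2}\lesssim 2^{-N(k-j)_+}\|P_ju\|_{L^2}$.
\item Summing in $j$ for each fixed $k$, and then applying Schur's test or Young's inequality on the dyadic sequence, gives $\|F(u)-F(0)\|_{H^s}\lesssim\|u\|_{H^s}$ for every $s>0$.
\end{itemize}
The condition $s>0$ is used precisely in summing the low-frequency ($j<k$) contributions.

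\textbf{Moser difference estimate (2).} I would write
\[F(u)-F(v)=G(u,v)\,(u-v),\qquad G(u,v)=\int_0^1F'(v+\tau(u-v))\,d\tau.\]
I would then apply the tame product estimate
\[\|fg\|_{H^s}\lesssim\|f\|_{L^\infty}\|g\|_{H^s}+\|f\|_{H^s}\|g\|_{L^\infty},\]
which follows from the paraproduct decomposition $fg=T_fg+T_gf+R(f,g)$. With $f=G(u,v)$ and $g=u-v$ this gives the two terms on the right-hand side. For the first, $\|G\|_{L^\infty}\lesssim1$. For the second, I need $\|G\|_{H^s}$, where $G$ is not small. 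I would control it by viewing $G(u,v)-G(0,0)$ as a smooth function of the vector $(u,v)$ vanishing at the origin and applying part (1) in vector form. This gives $\|G-G(0,0)\|_{H^s}\lesssim\|u\|_{H^s}+\|v\|_{H^s}$, and the constant $G(0,0)$ contributes only to the first term.

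\textbf{Product estimate (3).} I would argue by duality. The claim $\|uv\|_{H^{-s_0}}\lesssim\|u\|_{H^{s_1}}\|v\|_{H^{s_2}}$ is equivalent to the trilinear bound
\[\Big|\int uvw\Big|\lesssim\|u\|_{H^{s_1}}\|v\|_{H^{s_2}}\|w\|_{H^{s_0}},\]
which is symmetric in the three indices. I would decompose each function dyadically. A frequency-support argument shows that in any nonvanishing triple $(P_iu,P_jv,P_kw)$ the two largest frequencies are comparable. Using Hölder's inequality and Bernstein's inequality on the lowest frequency, $\|P_\ell f\|_{L^\infty}\lesssim2^{\ell d/2}\|P_\ell f\|_{L^2}$, each term is bounded by
\[2^{\ell d/2}\,2^{-\ell s_{\min}}\,2^{-m(s_{\text{mid}}+s_{\text{max}})},\]
up to the product of the $\ell^2$-normalized dyadic norms, where $\ell$ is the low frequency and $m$ the high one. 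The two hypotheses are exactly what makes the resulting double dyadic sum converge:
\begin{itemize}
\item $s_0+s_1+s_2\geq d/2$ controls the high--high--low interactions;
\item $s_0+s_1+s_2\geq\max s_i$ ensures that the two high-frequency factors together have nonnegative total regularity.
\end{itemize}
When one of these is an equality, the other must be strict, and the strict one supplies a geometric factor in $m-\ell$ or in $\ell$ to close the summation. Without it the sum would be only logarithmically divergent.

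\textbf{Main obstacle.} I expect the borderline bookkeeping in (3) to be the hardest step: the endpoint cases where one inequality is an equality, and the cases where some $s_i$ is negative so that the Bernstein factor must be placed on a different function. I would handle this by enumerating which of $s_0,s_1,s_2$ is smallest. In each case I would use the trilinear symmetry to reduce to a single configuration, and then sum with Cauchy--Schwarz in the dyadic index.
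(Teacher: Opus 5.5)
The paper gives no proof of this lemma. It is quoted verbatim from \cite{mao2023}, where these Moser and product estimates are recorded as standard facts, so your Littlewood--Paley argument is a self-contained proof rather than a reconstruction of anything in the text. The overall plan is correct:
\begin{enumerate}
\item for (1), Meyer's telescoping decomposition with $\|\partial^\alpha m_j\|_{L^\infty}\lesssim 2^{j|\alpha|}$;
\item for (2), the factorization $F(u)-F(v)=G(u,v)(u-v)$, the tame product estimate, and the vector-valued form of (1) applied to $G-G(0,0)$, with the constant $G(0,0)$ feeding only the $\|u-v\|_{H^s}$ term;
\item for (3), duality to a symmetric trilinear form, followed by a high--high--low dyadic analysis.
\end{enumerate}
What your route adds over the bare citation is that it makes visible exactly where the strictness hypothesis in (3) is needed: at a logarithmically divergent endpoint.

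Two bookkeeping points need fixing when you write it out.

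First, in (1) you have the role of $s>0$ reversed. The terms with $j<k$ are summable for every $s$ by the $2^{-N(k-j)}$ decay with $N>s$. The hypothesis $s>0$ is needed for the terms with $j>k$, where the only available bound is $\|P_k(m_jP_ju)\|_{L^2}\lesssim\|P_ju\|_{L^2}$ and one needs $\sum_{j\ge k}2^{(k-j)s}<\infty$.

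Second, in (3) the bound $2^{\ell d/2}2^{-\ell s_{\min}}2^{-m(s_{\mathrm{mid}}+s_{\max})}$ is not right in general. Write $\sigma=s_0+s_1+s_2$. With Bernstein on the lowest-frequency factor, the exponent is
\[
\ell(d/2-s_a)-m(\sigma-s_a),
\]
where $s_a$ is the index of whichever function sits at frequency $2^\ell$. Since $m\ge\ell$, this is increasing in $s_a$. So your displayed bound is the most favourable configuration, and the binding one has the largest index at low frequency. Enumerating which $s_i$ is smallest and invoking symmetry does not reduce the three configurations to one, because the symmetry is already used up in ordering the indices. Instead, treat arbitrary $s_a$ and split on the sign of $d/2-s_a$:
\begin{enumerate}
\item If $s_a<d/2$, write the exponent as $(\ell-m)(d/2-s_a)+m(d/2-\sigma)$ and use $\sigma\ge d/2$.
\item If $s_a>d/2$, bound the exponent by $\ell(d/2-s_a)$, using $\sigma\ge\max_i s_i\ge s_a$.
\item If $s_a=d/2$, you get $2^{-m(\sigma-d/2)}$ against $\sum_{\ell\le m}a_\ell\lesssim\sqrt{m+1}\,\|a\|_{\ell^2}$. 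This needs $\sigma>d/2$, which holds because $\sigma=d/2=s_a\le\max_i s_i\le\sigma$ would make both hypotheses equalities.
\end{enumerate}
In particular, negative indices never require moving the Bernstein factor to a different function.
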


Applying the estimates with $(s_0,s_1,s_2) = (2-s,s,s-2)$ and $(2-s,s-1,s-1)$ for $\sigma$ and $\pi$ nonlinear terms, and using the assumptions in Theorem \ref{thm:gluing_linear}, we find that 
\[\supp(F_Q,G_Q) \subseteq C; \quad
||F_Q||_{H^{s-2}(C)} \lesssim \delta^6 \ep; \quad
||G_Q||_{H^{s-2}(C)} \lesssim \delta^{7} \ep,\] 
since the quadratic terms of $F_Q,G_Q$ are all strictly smaller.

On the other hand, the nonlinear terms are all designed so that after applying the conversion they all satisfy all satisfy
\[||M_Q||_{H^{s-2}(C)} \lesssim \delta^{6} \ep^2; \qquad ||N_Q||_{H^{s-2}(C)} \lesssim_{M_c} \delta^{7} \ep^2. \]
Moreover, they also satisfy the support property $\supp(M_Q,N_Q) \subseteq C$.

Choosing $\ep = \ep_0$ with no dependence on $\delta$ therefore ensures that for $||\tsigma^{(n)}||_{H^s(C)} \lesssim \delta^4 \ep$ and , $||\tpi^{(n)}||_{H^{s-1}(C)} \lesssim \delta^{3} \ep$ with the above hierarchy, we have
\[ ||\tsigma^{(n+1)}||_{H^s(C)} = ||S_f(F_Q + M_Q^{(n)})||_{H^s(C)} \lesssim \delta^4 \ep\]
and
\[\|\tpi^{(n+1)}\|_{H^{s-1}(C)} = \|S_{Y}(G_Q + N_Q^{(n)})\|_{H^{s-1}(C)} \lesssim \delta^{3} \ep,\]
with precisely the same component structure after using the bound on the solution operator $S$.

Note that at each step we ensure that we have a uniform bound of $\delta^7 \ep, \delta^{13/2} \ep$ for the Hamiltonian and Momentum constraint equations respectively across all tensor components to recover the bound; it is not possible to improve this because our estimates involve the scalar quantities $F_Q,G_Q,M_Q,N_Q$.

Finally, we observe that our choice of $\ep = \ep_0$ means that if we fix $(\tsigma_Q', \tpi_Q')$ such that $\supp(\tsigma_Q',\tpi_Q') \subseteq C$ and 

\[\begin{cases}
||\tsigma_Q'^{rr}||_{H^s(C)} \lesssim \delta^4 \ep \\
||\tsigma_Q'^{Ar}||_{H^s(C)} \lesssim \delta^4 \ep \\
||\tsigma_Q'^{AA}||_{H^s(C)} \lesssim \delta^6 \ep \\
\end{cases}
\qquad
\begin{cases}
||\tpi_Q'^{rr}||_{H^{s-1}(C)} \lesssim \delta^{3} \ep \\
||\tpi_Q'^{Ar}||_{H^{s-1}(C)} \lesssim \delta^{5} \ep \\
||\tpi_Q'^{AA}||_{H^{s-1}(C)} \lesssim \delta^{5} \ep \\
\end{cases} \]
then
\begin{align*}
&||(F_Q + M_Q)[(\tsigma_Q, \tpi_Q)] - (F_Q + M_Q)[(\tsigma_Q', \tpi_Q')]||_{H^{s-2}} \lesssim  \delta^6\ep ||(\tsigma_Q - \tsigma_Q', \tpi_Q - \tpi_Q')||_{H^s \times H^{s-1}(C)}. \\
&||(G_Q + N_Q)[(\tsigma_Q, \tpi_Q)] - (G_Q + N_Q)[(\tsigma_Q', \tpi_Q')]||_{H^{s-2}} \lesssim \delta^7 \ep ||(\tsigma_Q - \tsigma_Q', \tpi_Q - \tpi_Q')||_{H^s \times H^{s-1}(C)}.
\end{align*}
which holds componentwise for all $rr,Ar,AB$.

Here we can avoid problematic factors of $\delta$ by estimating in the ball with the natural distance
\[d((\sigma,\pi),(\sigma',\pi')) \coloneqq
\max\left(
\begin{aligned}
&\delta^{-4}\|\sigma^{rr} - \sigma'^{rr}\|_{H^s},\\
&\delta^{-4}\|\sigma^{Ar} - \sigma'^{Ar}\|_{H^s},\\
&\delta^{-6}\|\sigma^{AA} - \sigma'^{AA}\|_{H^s},\\
&\delta^{-3}\|\pi^{rr} - \pi'^{rr}\|_{H^{s-1}},\\
&\delta^{-5}\|\pi^{Ar} - \pi'^{Ar}\|_{H^{s-1}},\\
&\delta^{-5}\|\pi^{AA} - \pi'^{AA}\|_{H^{s-1}}
\end{aligned} \right).\]

Thus as long as $\ep$ is sufficiently small, contraction mapping principle shows that for every $Q \in \Qcal$ there is a unique $(\tsigma_Q, \tpi_Q)$ solving the projected Equation~\eqref{eqn:constraints_thinx} away from the cokernel that vanishes outside of $A$.
Moreover, the map $Q \mapsto (\tsigma_Q, \tpi_Q)$ is locally Lipschitz.

\textbf{Step 5} 

We now wish to prescribe $Q \in \Qcal$ so that the cokernel vanishes:
\begin{equation}\label{eqn:nonlinear_cokenel}
\begin{aligned}
&\int (F_Q + M_Q) (\sqrt{1+||x||_2}, x^1 \dots, x^d)^T dx = 0, \\
&\int (G_Q + N_Q) (\dots,     \sqrt{1+||x||_2} \delta_{jl} + x^l \delta_{j0}, \dots,  x^i \delta_{jk} - x^k \delta_{ji})^T = 0.
\end{aligned}
\end{equation}
where we recall that the cokernel elements are functions of $z$ as in Lemma~\ref{lem:cok_on_AC}.
Note that these cokernel elements and their $z$-derivatives are all $O(1)$ for the Hamiltonian constraint, $O(1/\delta^2)$ in the momentum constraint case, and uniformly bounded. 

Let us introduce $\chi^C_{-1,1}$ as in Equation~\eqref{eqn:avg_C}, and observe that $\supp(\chi^C_{-1,1}(r))\subseteq \widetilde{C}$, with $\chi^C_{-1,1} \equiv 1$ on $C$.

Let us also isolate the linear and quadratic parts of $F_Q + M_Q$ and $G_Q+N_Q$, which we will write as $\widetilde{M_Q^2} - \pdv_{z^i} \pdv_{z^j} \bsigma_Q^{ij} + b_{ij} \bsigma_Q^{ij}$ and $\widetilde{N_Q^2} - \pdv_{z^i} \bpi_Q^{ij}$, respectively.
Hence after averaging, by applying Lemma \ref{lem:avg_charges} applied on $C$, the first line of Equation~\eqref{eqn:nonlinear_cokenel} becomes

\begin{align*}
    &\int -\chi^C_{-1,1}(r) (- \pdv_{z^i} \pdv_{z^j} \bsigma_Q^{ij} + b_{ij} \bsigma_Q^{ij}) \begin{pmatrix}
        \sqrt{1+||x||_2^2}(z) \\
        \vdots \\
        x^d(z) \\
    \end{pmatrix}
    dx + 
    \int_{\widetilde{C}}
    \chi^C_{-1,1}(r) \widetilde{M_Q^2} 
    \begin{pmatrix}
        \sqrt{1+||x||_2^2}(z) \\
        \vdots \\
        x^d(z) \\
    \end{pmatrix}
    dz  \\
    &=
    \begin{pmatrix}
    \Ef \\
    \Cf^1 \\
    \vdots \\
    \Cf^d \\
    \end{pmatrix}
    [(\sigma_{in; Q}, \pi_{in; Q}); C_{-1}] - 
    \begin{pmatrix}
    \Ef \\
    \Cf^1 \\
    \vdots \\
    \Cf^d \\
    \end{pmatrix}
    [(\sigma_{in; Q}, \pi_{in; Q}); C_{1}] + \int_{\widetilde{C}}
    \chi^C_{-1,1}(r) \widetilde{M_Q^2} 
    \begin{pmatrix}
        \sqrt{1+||x||_2^2}(z) \\
        \vdots \\
        x^d(z) \\
    \end{pmatrix}
    dz 
\end{align*}
Here, we use $C_{-1}$ and $C_1$ to denote the translations of $C$ in the $z_d$ coordinate by $\pm 1$. 
This is in analogy with the rescaling of the annulus $A_{1/\delta}$ and $A_{8/\delta}$.

Taking Equation~\eqref{eqn:constraints_thinx} for $(\sigma_{in}, \pi_{in})$, translating into $z$ coordinates, we can rewrite these charge terms with respect to $C$ (without any translations) by integration by parts:
\[\begin{pmatrix}
    \Ef \\
    \Cf^1 \\
    \vdots \\
    \Cf^d \\
    \end{pmatrix}
    [(\sigma_{in; Q}, \pi_{in; Q}); C_{-1}] = \begin{pmatrix}
    \Ef \\
    \Cf^1 \\
    \vdots \\
    \Cf^d \\
    \end{pmatrix}
    [(\sigma_{in; Q}, \pi_{in; Q}); C] + \int_{\widetilde{C}}
    \chi^C_{-1,0}(r) \widetilde{M_Q^2} 
    \begin{pmatrix}
        \sqrt{1+||x||_2^2}(z) \\
        \vdots \\
        x^d(z) \\
    \end{pmatrix}
    dz.   \]
Repeating a similar computation for $(\sigma_{out; Q}, \pi_{out;Q})$ lets us rewrite the first line of Equation~\eqref{eqn:nonlinear_cokenel} as 
\[\begin{pmatrix}
    \Ef \\
    \Cf^1 \\
    \vdots \\
    \Cf^d \\
    \end{pmatrix}
    [(\sigma_{in; Q}, \pi_{in; Q}); C] 
    - 
    \begin{pmatrix}
    \Ef \\
    \Cf^1 \\
    \vdots \\
    \Cf^d \\
    \end{pmatrix}
    [(\sigma_{out; Q}, \pi_{out; Q}); C] + 
    \begin{pmatrix}
    n[Q]_{\Ef} \\
    \vdots \\
    n[Q]_{\Cf^d} \\
    \end{pmatrix}
    \]
For 
\[\begin{pmatrix}
    n[Q]_{\Ef} \\
    \vdots \\
    n[Q]_{\Cf^d} \\
    \end{pmatrix} = 
    \int_{\widetilde{C}} (\chi^C_{-1,1}(r) \widetilde{M_Q^2} + \chi^C_{-1,0}(r) \widetilde{M_{in;Q}^2} + \chi^C_{0,1}(r) \widetilde{M_{out;Q}^2}) \begin{pmatrix}
        \sqrt{1+||x||_2^2}(z) \\
        \vdots \\
        x^d(z) \\
    \end{pmatrix}dz.\]

We can perform the identical computations for the Momentum equation to obtain that the second line of Equation~\eqref{eqn:nonlinear_cokenel} becomes
\[\begin{pmatrix}
    \Pf^1 \\
    \vdots \\
    \Jf^{d(d-1)/2} \\
    \end{pmatrix}
    [(\sigma_{in; Q}, \pi_{in; Q}); C] 
    - 
    \begin{pmatrix}
    \Pf^1 \\
    \vdots \\
    \Jf^{d(d-1)/2} \\
    \end{pmatrix}
    [(\sigma_{out; Q}, \pi_{out; Q}); C] + 
    \begin{pmatrix}
    n[Q]_{\Pf^1} \\
    \vdots \\
    n[Q]_{\Jf^{d(d-1)/2}} \\
    \end{pmatrix}
    \]
For
\[\begin{pmatrix}
    n[Q]_{\Pf^1} \\
    \vdots \\
    n[Q]_{\Jf^{d(d-1)/2}} \\
    \end{pmatrix} = 
    \int_{\widetilde{C}} (\chi^C_{-1,1}(r) \widetilde{N_Q^2} + \chi^C_{-1,0}(r) \widetilde{N_{in;Q}^2} + \chi^C_{0,1}(r) \widetilde{N_{out;Q}^2}) \begin{pmatrix}
        \sqrt{1+||x||_2^2}\delta^1_j + x^1 \delta_{j0} \\
        \vdots \\
        x^d \delta_{j(d-1)} - x^{(d-1)} \delta_{jd} \\
    \end{pmatrix}dz.\]

We claim that $|n[Q]|_w \lesssim \delta^6 \ep^2$ and $||n[\cdot]||_{Lip,w} \lesssim K \ep$ independent of $\delta$.

Recall we have the componentwise bounds $\| \tsigma^{rr}\|_{\wH^s}, \| \tsigma^{Ar}\|_{\wH^s} \lesssim \delta^4 \ep$, $\| \tsigma^{AA}\|_{\wH^s} \lesssim \delta^6 \ep$, $\| \tpi^{rr}\|_{\wH^s} \lesssim \delta^3 \ep$ and $\| \tpi^{Ar}\|_{\wH^s}, \| \tpi^{AA}\|_{\wH^s} \lesssim \delta^5 \ep$.
Our cokernel is also size $O(1)$ in the Hamiltonian constraint case, whereas it is $O(\delta^{-2})$ for the momentum constraint. 
Letting $K_F, K_G$ denote the Lipschitz constants of $Q \mapsto F_Q$ and $Q \mapsto G_Q$ respectively in $H^{s-2}(C)$.
Since the charges are pairings of $F_Q$ and $G_Q$ with the cokernel elements, the natural sizes of $K_F$ and $K_G$ are $O(1)$ and $O(\delta^2)$ respectively. 

Consequently, keeping into account the relative sizes of components and of the cokernel, we find that we have bounds 
\[ \|\Delta \tsigma^{rr,Ar}\| \lesssim \delta^{-2}|\Delta Q_H|, \quad \|\Delta \tsigma^{AA}\| \lesssim |\Delta Q_H|, \quad \|\Delta \tpi^{rr}\| \lesssim \delta^{-2}|\Delta Q_M|, \quad \|\Delta \tpi^{Ar,AA}\| \lesssim |\Delta Q_M|, \]

For the Hamiltonian constraint case, the worst term is $\tsigma^{Ar}\tsigma^{Ar}$, where we lose $\delta^{-2}$ from the conversion and $\delta^{-2}$ from the solution operator, with cokernel of size $O(1)$ for $\tsigma$. For $M(\pi,\pi)$, all terms are treated the same. For example, $\tpi^{AA}\tpi^{AA}$ loses $\delta^{-4}$ in the conversion, but the $AA$ component solution operator is $O(\delta^{-2})$ and cokernel is size $O(1)$; hence in converting back from $\tsigma$ to $F_Q$ we find that we have a bound
\[\delta^{-4}(\delta^4 \ep \|\Delta \tsigma\|) + \delta^{-4}(\delta^{5} \ep \|\Delta \tpi\|) \lesssim (K_F + \delta^{-1}K_G) \ep |\Delta Q|,\]
where $K_F$ and $K_G$ denote the Lipschitz constants of $F_Q$ and $G_Q$, respectively.

Similarly, for the momentum constraint the worst terms are $\tsigma^{Ar}\tpi^{rr}$ and $\tsigma^{Ar}\tpi^{AA}$ paired with the angular cokernel which has size $\delta^{-2}$.
For the first term, we start with $\delta^4 \|\Delta \tpi^{rr}\|$, lose $0$ from the conversion, $2$ from the cokernel, and $4$ from the solution operator.
For the latter, we start again with $\delta^4 \|\Delta \tpi^{AA}\|$, lose $2$ from the conversion, $2$ from the cokernel, and $2$ from the solution operator.

For the other term, the problematic term is $\tpi^{rr}\tsigma^{Ar}$ we start with $\delta^3 \|\Delta \tsigma^{Ar}\|$, lose nothing from the conversion, but $2$ from the cokernel, and $2$ more from the solution operator.
Hence we have a bound
\[\delta^{-4}(\delta^4 \ep \|\Delta \tpi\|) + \delta^{-2}(\delta^{3} \ep \|\Delta \tsigma\|) \lesssim (\delta^{-2} K_G + \delta^{-1}K_F) \ep |\Delta Q|. \]

Since the natural sizes of $K_F$ and $K_G$ are $O(1)$ and $O(\delta^2)$ respectively, we can arrange everything into a Lipschitz matrix of the form
$\ep \begin{pmatrix}
 1 & \delta \\
 \delta^{-1} & 1
\end{pmatrix}$, which suggests the weighted charge norm $|Q|_w = |Q|_H + \delta |Q|_M$, in which the Lipschitz constant $K$ becomes independent of $\delta$.

For the former, remember that the Hamiltonian nonlinear terms were all bounded by $\delta^6 \ep^2$ in $H^k(C)$ norm with cokernel $O(1)$, while for the momentum constraint the nonlinear terms were bounded by $\delta^{7}\ep^2$ and the components of the cokernel and their derivatives are all $O(\delta^{-2})$ and uniformly bounded on $\widetilde{C}$; in both cases, we have a uniform $\delta^6 \ep^2$ bound in the weighted charge norm in the momentum case.
Importantly, $\tsigma$ dominates all contributions, since $\bsigma$ is substantially smaller. 
Hence the $L^{\infty}$ and $H^1 \times L^2(C)$ of $(\sigma_Q, \pi_Q)$ suffice.
As remarked in \cite{mao2023}, the quadratic terms where one of the factors is $\pdv_{z^i} \pdv_{z^j} \sigma_Q^{ij}$ can be handled simply by integration by parts.


Let us now for simplicity focus on exterior gluing, where interior gluing simply flips the in and out parameters. 
Then $\Qf[(\sigma_{in;Q}, \pi_{in;Q});C] = \cQ$, while $\Qf[(\sigma_{out;Q}, \pi_{out;Q});C] = Q$.

Hence we need to solve the fixed point problem
\[Q = \cQ + n[Q].\]
Note that in this problem we see no application of the solution operator $S$, and hence no loss of factors of $\delta$.
Using that $|n[Q]|_w \lesssim \delta^6 \ep^2$, $||n[\cdot]||_{Lip,w} \lesssim K\ep$, and the fact that $\text{dist}_w{(\cQ, \pdv \Qcal)} > \delta^6 M_c \ep^2$, the map is a contraction inside of $\Qcal$ and thus there is a unique $Q \in \Qcal$ satisfying the fixed point argument. 
This concludes the proof of the theorem.

\end{proof}

Let us now prove Proposition \ref{prop:properties_linear}.

\begin{proof}[Proof of \ref{prop:properties_linear}]
First, we note that the Lipschitz dependence is a consequence of the fixed point iterations we used in the above proof.

For persistence of regularity, it suffices to bound the derivatives $(\pdv^{\alpha} \sigma_Q, \pdv^{\alpha}_z \pi_Q)$ in $H^s \times H^{s-1}(C)$, where $|\alpha| \leq m$.

To this end, let us observe that, as in \cite{mao2023} differentiating the fixed point Equation~\eqref{eqn:fixed_point_sigmapi} yields 
\begin{equation}\label{eqn:persist_reg}
\begin{aligned}
&(\pdv^{\alpha}_z \tsigma, \pdv^{\alpha}_z \tpi) - S(D_{(\tsigma,\tpi)} (F + M, G + N)[(\pdv^{\alpha}_z \tsigma, \pdv^{\alpha}_z \tpi)]) \\
&= S[\pdv^{\alpha}_z (F,G)] + [\pdv^{\alpha},S](F+M,G+N)(\tsigma,\tpi) + (E_{\alpha,F+M}, E_{\alpha,G+N})(\tsigma,t\pi)
\end{aligned}
\end{equation}
where $D_{(\tsigma,\tpi)} (F + M, G + N)[(\pdv^{\alpha}_z \tsigma, \pdv^{\alpha}_z \tpi)]$ is the linearization of $F+M$ around $(\tsigma,\tpi)$ applied to $(\pdv^{\alpha}_z \tsigma, \pdv^{\alpha}_z \tpi)$, i.e. the term where all of the $\alpha$ derivatives fall on $(\tsigma,\tpi)$.
In addition, $(E_{\alpha,F+M}, E_{\alpha,G+N})$ refers to lower order error terms with mixed derivative components. 
These will be negligible due to fewer derivatives (and hence fewer factors of $\delta$) when acting upon $S$.

Applying the commutator estimates at the end of Step 3, we find that the right hand side of Equation~\eqref{eqn:persist_reg} yields
\begin{align*}
||\text{RHS of} \hspace{0.1cm} (\ref{eqn:persist_reg})||_{H^s \times H^{s-1}(C)} \lesssim \frac{1}{\delta^{6}} ||(\bsigma, \bpi)||_{H^{s+m} \times H^{s+m-1}} + (\text{quadratic terms}).
\end{align*}
Here, the factor of $1/\delta^6$ comes from the momentum case only; for the Hamiltonian case, we have the loss of $1/\delta^2$ from the solution operator, and a separate loss of $1/\delta^2$ from the hierarchy and returning from $F_Q$ to $\bsigma_Q$.
From the momentum case: we get $O(1/\delta^4)$ from the commutator estimates, and then another factor of $1/\delta^2$ from converting $G_Q$ back into $\bpi_Q$.

The quadratic terms will be lower order in the factors of $\delta$, due to the presence of the term of the form $||(\tsigma, \tpi)||_{H^{s+m-1} \times H^{s+m-2}}$ which appears when applying Gagliardo-Nirenberg.

To obtain the other bound, observe that 
\begin{align*}
||(\pdv^{\alpha}_z \tsigma, \pdv^{\alpha}_z \tpi)||_{H^s \times H^{s-1}(C)} &\lesssim  ||(\pdv^{\alpha}_z \tsigma, \pdv^{\alpha}_z \tpi)|| + ||S(D_{(\tsigma,\tpi)} (F + M, G + N)[(\pdv^{\alpha}_z \tsigma, \pdv^{\alpha}_z \tpi)])||_{H^s \times H^{s-1}(C) }  \\
&\lesssim ||(\pdv^{\alpha}_z \tsigma, \pdv^{\alpha}_z \tpi)|| + \widetilde{B}||(D_{(\tsigma,\tpi)} (F + M, G + N)[(\pdv^{\alpha}_z \tsigma, \pdv^{\alpha}_z \tpi)])||_{H^s \times H^{s-1} } \\
&\lesssim ||(\pdv^{\alpha}_z \tsigma, \pdv^{\alpha}_z \tpi)|| + \delta^{-2}\ep||(\pdv^{\alpha}_z \tsigma, \pdv^{\alpha}_z \tpi))||_{H^s \times H^{s-1} }
\end{align*}
where we observe that the worst componentwise factor takes $\tsigma^{Ar}\tpi^{AA}$ to $\tpi^{rr}$, giving a total of $\delta^{4-2-4}=\delta^{-2}$.
Hence using $\ep=\delta^2\ep_c$, we get the desired coercivity which completes the proof.
\end{proof}

\appendix
\section{Deriving Trace-Shifted Equations}\label{sec:derive_sigma_pi}

Here, we detail the derivation of Equation~\eqref{eqn:constraints}.

To begin with, let us apply a similar procedure to take a trace-shifted difference of our prescribed metrics.
Note that this will not be trace-free, as we are trying to eliminate the Laplacian term. 

For $(g,k)$ sufficiently close to $(b,0)$, let us introduce the following substitutions:
\begin{equation}
\sigma_{ij} = g_{ij} - b_{ij} - b_{ij}\tr_b(g-b); \qquad \pi_{ij} = k_{ij} - b_{ij} \tr_b k
\end{equation}

Studying the linearized scalar curvature independently, let $h = g-b$. 
Then we have
\begin{align*}
DR\vert_b [h] = -\Delta_b(\tr_b h) + \dv_b(\dv_b h) + (d-1)\tr_b h \\
\end{align*}
Note that
\begin{align*}
\dv_b(\dv_b h) = \nabla^i_b \nabla^j_b h_{ij} =  \nabla^i_b \nabla^j_b \sigma_{ij} +   \nabla^i_b \nabla^j_b b_{ij} \tr_b h = \nabla^i_b \nabla^j_b \sigma_{ij}+ b^{ik}b^{jl} \nabla_{k,b} \nabla_{l,b} b_{ij} \tr_b h
\end{align*}
and
\[b^{ik}b^{jl} \nabla_{k,b} \nabla_{l,b} b_{ij} \tr_b h = b^{ik}b^{jl} b_{ij} \nabla_{k,b} \nabla_{l,b} \tr_b h = b^{ik}\delta^l_i \nabla_{k,b} \nabla_{l,b} \tr_b h = b^{lk} \nabla_{k,b} \nabla_{l,b} \tr_b h \]
But this is just $\nabla_{k,b} \nabla^{k,b} \tr_b h = \Delta_b (\tr_b h)$.

Thus we have shown that
\[DR\vert_b [h] = \nabla^i_b \nabla^j_b \sigma_{ij} + (d-1) \tr_b h \]

When linearizing, we have
\[R(g) = R(b) + DR \vert_b [h] + Q_R(h)\]
where the last piece contains nonlinear terms in $h$.
But $R(b) = -n(n-1)$, so this becomes
\begin{align*}
DR \vert_b [h] &= R(g) + d(d-1) - Q_R(h) \\
&= |k|^2_g - (\tr_g k)^2
\end{align*}

Now if $\pi_{ij} = k_{ij} - b_{ij}\tr_b k$, we can write
\[\tr_b \pi = b^{ij}\pi_{ij} = b^{ij}k_{ij} - b^{ij}b_{ij}\tr_b k = -(d-1) \tr_b k \]

and
\[ |\pi|^2_b = \pi^{ij}\pi_{ij} = |k|^2_b + (d-2)(\tr_b k)^2 \]

So that 
\[|k|^2_b - (\tr_b k)^2 = |\pi|^2_b - \frac{(d-2)}{(d-1)^2}(\tr_b \pi)^2 - \frac{1}{(d-1)^2} (\tr_b \pi)^2 = |\pi|^2_b - \frac{1}{(d-1)} (\tr_b \pi)^2 \]

Now we need to estimate the difference
\[|k|^2_g - (\tr_g k)^2 - \left(|k|^2_b - (\tr_b k)^2\right) \]

To do this, we use the Schematic notation as in Definition \ref{defn:schematic}.

With this notation, we notice that the difference described includes terms of the form $A(h)^{ij}k_ik_j$ where $A$ is some algebraic function of $h$ with no derivatives, so we can write 
\[|k|^2_g - (\tr_g k)^2 =  \left(|k|^2_b - (\tr_b k)^2\right) + M^{(0)}_h(k,k)\]

And as in \cite{mao2023}, expanding our nonlinear piece $Q_R(h)$ gives expressions in terms of the difference of Christoffel symbols $\Gamma(g) - \Gamma(b)$, arising from the expansion of the scalar curvature as 
\[R(g) = g^{ij}(\partial_k \Gamma^k_{ij} - \partial_j \Gamma^k_{ik} + \Gamma^k_{ij}\Gamma^l_{kl} - \Gamma^l_{ik}\Gamma^k_{jl})\]

Assuming that the differences are sufficiently close, these terms will therefore look like 
\[M^{(1)}_h(\nabla_b h, \nabla_b h), \qquad M^{(2)}_h(h, \nabla_b^2 h)\] 
from the last two terms, and the first two terms, respectively.

Moreover, we can write $k$ in terms of $\pi$ as 
\[k_{ij} = \pi_{ij} - \frac{1}{d-1} b_{ij} \tr_b \pi\]
we can then rewrite this as 
\[|k|^2_g - (\tr_g k)^2 =  \left(|k|^2_b - (\tr_b k)^2\right) + M^{(0)}_h(\pi,\pi) = M^{(0)}_h(\pi,\pi)\]
since all of the relevant terms are of this form. 

Putting everything together, we see that our equation in reformulated coordinates reads
\begin{equation}\label{eqn:ham}
\nabla^i_b \nabla^j_b \sigma_{ij} + (d-1) \tr_b h = M^{(0)}_h(\pi,\pi) + M^{(1)}_h(\nabla_b h, \nabla_b h) + M^{(2)}_h(h, \nabla_b^2 h)
\end{equation}

Similarly for the Momentum constraint, notice that if we define
\[\widehat{\pi}_{ij} = k_{ij} - g_{ij} \tr_g k\]
we see that the momentum constraint becomes $\nabla^i_g \widehat{\pi}_{ij}=0$.

Plugging in $\pi$, we get
\[\widehat{\pi}_{ij} = \pi_{ij} + (b_{ij}-g_{ij})\tr_b k - g_{ij}(\tr_g k - \tr_b k)\]
and consequently 
\[\nabla^i_g \widehat{\pi}_{ij} = \nabla^i_g[\pi_{ij}] + \nabla^i_g[(b_{ij}-g_{ij})\tr_b k - g_{ij}(\tr_g k - \tr_b k)]\]
There are two types of error terms that show up, one involving changing from $\nabla^i_g$ to $\nabla^i_b$, and one is from the terms specified here.
Thus we can use the same schematic notation to write
\begin{equation}\label{eqn:mom}
\nabla^i_b \pi_{ij} = N^{(1)j}_h(h, \nabla_b \pi) + N^{(0)j}_h(\nabla_b h, \pi)
\end{equation}

Equations \eqref{eqn:ham} and \eqref{eqn:mom} are defined in terms of a ``trace-shifted" metric $\sigma$ and the metric $h=g-b$.
This is actually redundant; instead, we modify all of our terms to simply depend on $\sigma$ and the quantity $\tau = \tr_b h$. 
Notice however that $\tau = \tr_b h  = b^{ij}\sigma_{ij} + b^{ij}b_{ij} \tr_b h= \tr_b \sigma + d\tau$, so $\tr_b \sigma = (1-d)\tau$.
Moreover, the shift will not change the behavior of the schematic notation, as it will just readjust the expression and constants but not change the boundedness assumptions.
Thus, all quantities can be rewritten in terms of sigma as:
\begin{align*}
\begin{cases}
\nabla^i_b \nabla^j_b \sigma_{ij} - \tr_b \sigma = M^{(0)}_{\sigma}(\pi,\pi) + M^{(1)}_{\sigma}(\nabla_b \sigma, \nabla_b \sigma) + M^{(2)}_{\sigma}(\sigma, \nabla_b^2 \sigma) \\
\nabla^i_b \pi_{ij} = N^{(1)j}_{\sigma}(\sigma, \nabla_b \pi) + N^{(0)j}_{\sigma}(\nabla_b \sigma, \pi)
\end{cases}
\end{align*}
The reason we still need to carry the trace around is because the limiting curvature is not zero, and eliminating the Laplacian term comes at the price of also holding on to $\tr_b \sigma$.

\section{Solution Operator Theorems}\label{sec:soln_op_thms}
In this section, we state the theorems that we will need to use in Step 3 of the proof of \ref{thm:gluing_linear}.

First, we need to define some constants. 

We require $A_S$ so that 
\[|L_y^{|\alpha|}|y_1-y|^{|\beta|}(\pdv_y + \pdv_{y_1})^{\alpha} \pdv_{y_1}^{\beta} S_J^{\gamma,K}(y,y_1,s) | \leq A_S |y_1-y|^{m_K + |\gamma|}s^{m_K + |\gamma|-1} \]
where $L_y^{|\alpha|}$ is the characteristic length scale of $y$-derivatives and $|\alpha|+|\beta| \leq M_S$ for some $M_S \in \N$. 
Then $A_x$ is defined so that 
\[(1+A_x)^{-1}|y_1-y| \leq |\pdv_s x(y,y_1,s)| \leq (1+A_x)|y_1-y|,\]
and $A_{\eta}$ is defined so that
\begin{equation*}
	|L_{y}^{|{\alpha}|} |y_{1}-y|^{|\beta|} (\pdv_{y} + \pdv_{y_{1}})^{\alpha} \pdv_{y_{1}}^{\beta}\eta(y, y_{1})| \leq A_{\eta} L_{\eta}^{-d} \hbox{ for } |\alpha| + |\beta| \leq M_{\eta}.
\end{equation*}
and
\begin{align*}
	|L_{y}^{|\alpha|} L_{b}^{|\beta|} (\pdv_{y} + \pdv_{y_{1}})^{\alpha} \pdv_{y_{1}}^{\beta} \eta(y, y_{1})| \leq A_{\eta} L_{\eta}^{-d} \quad \hbox{ for } |\alpha| + |\beta| \leq M_{\eta}'.
\end{align*}
With constants $M_{\eta},M_{\eta}' \in \N$.

We will not always be so fortunate to eliminate the $y$-dependence of our smooth averaging function $\eta = \eta(y_1)$.
As a result, we do not get a nice formula for the correction in terms of the cokernel, but rather a smoothing operator $B_{\eta}$.

The following theorem is useful to derive a solution operator bound in the setting where $\eta$ has $y$-dependence.

\begin{theorem}[Theorem 5.11 in \cite{isett2025integral}] \label{thm:smth_ker_5.11}
Let $x : W \times [0, 1] \to \R^{d}$ and $\eta : W \to \R$, $\Pcal$ and $\tensor{S}{_{J}^{(\gamma, K)}}(y, y_{1}, s)$ satisfy the above bounds for $A_x,A_{\eta}$, and $A_S$. Assume also that
\[\overline{U} \cap \overline{\cup_{y \in U} \supp_{y_{1}} \eta(y, y_{1})} = \emptyset.\]
Then the integral kernel $K_{\eta} = \tensor{(K_{\eta})}{^{K}_{J}}$ defined by 
\begin{equation} \label{eq:smth-avg-ker}
	\int \tensor{(K_{\eta})}{^{K}_{J}}(x, y) \psi_{K}(x) \, \dd x
	= \int \int_{0}^{1} \sum_{\gamma} \tensor{S}{_{J}^{(\gamma, K)}}(y, y_{1}, s) \eta(y, y_{1}) \pdv_{x}^{\gamma} \psi_{K}(x(y, y_{1}, s)) \, \dd s \, \dd y_{1},
\end{equation}
satisfies
\begin{equation*}
	\Pcal K_{\eta}(\cdot, y) = \delta_{0}(\cdot-y) - b_{\eta}(\cdot, y) \quad \hbox{ on } U,
\end{equation*}
where $\tensor{(b_{\eta})}{^{J'}_{J}}(x, y) = 0$ in Case~(1), and in Case~(2), $\tensor{(b_{\eta})}{^{J'}_{J}}(x, y)$ is a distribution on $\R^{d}_{x} \times \R^{d}_{y}$ defined by
\begin{align*}
\tensor{(b_{\eta})}{^{J'}_{J}}(x, y) &\coloneqq \sum_{\mathbf{A} \in \Acal} \int (Z^{\mathbf{A}})_{J}(y, y_{1}) \eta(y, y_{1}) (g_{\mathbf{A}})^{J'}(x, y_{1}) \, \dd y_{1} \\
&= \sum_{\mathbf{A} \in \Acal} \sum_{\alpha} (-1)^{|\alpha|} \pdv_{x}^{\alpha} \left( c[g_{\mathbf{A}}]^{(\alpha, J')}(x) (Z^{\mathbf{A}})_{J}(y, x) \eta(y, x) \right),
\end{align*}
which moreover satisfies
\begin{align*}
	|L_{y}^{|\alpha|} |x-y|^{|\beta|} (\pdv_{y} + \pdv_{x})^{\alpha} \pdv_{x}^{\beta} \tensor{(b_{\eta})}{^{J'}_{J}}(x, y)| & \leq C_{\# \Acal, m_{K}+m'_{K}}  A_{g} A_{Z} A_{\eta} \quad \hbox{ for all } x \neq y, \\
	\supp_{x} \tensor{(b_{\eta})}{^{J'}_{J}}(x, y) &\subseteq \supp_{y_{1}} \eta(y, y_{1}), \quad \hbox{ for all } y \in \R^{d},
\end{align*}
where $|\alpha| + |\beta| \leq \min\{M_{x}, M_{Z}, M_{g}, M_{\eta}\} - C_{m_{K}, m_{K}'}$.

In both cases, the integral kernel $K_{\eta}(x, y)$ is a locally integrable function on $\R^{d} \times \R^{d}$ satisfying
\begin{align*}
	|\tensor{(K_{\eta})}{^{K}_{J}}(x, y)| &\leq C_{m_{K}, m_{K}'} A_{S} A_{\eta} (1+A_{x})^{2(d+m_{K'})} |x-y|^{-d+m_{K}} \quad \hbox{ for all } x \neq y, \\
	\supp \tensor{(K_{\eta})}{^{K}_{J}}(\cdot, y) &\subseteq \overline{\bigcup_{y_{1} : (y, y_{1}) \in \supp \eta} x(y, y_{1}, [0, 1])}, \\
	\supp \tensor{(K_{\eta})}{^{K}_{J}}(x, \cdot) &\subseteq \overline{\{y \in \R^{d} : \exists y_{1} \in \R^{d} \hbox{ s.t. } (y, y_{1}) \in \supp \eta, \, x \in x(y, y_{1}, [0, 1])}\},
\end{align*}
and the symbol $\tensor{(a_{\eta})}{^{K}_{J}}(\xi, y) = \int \tensor{(K_{\eta})}{^{K}_{J}}(y+z, y) e^{- i \xi \cdot z} \, \dd z$ obeys
\begin{align*}
	|\pdv_{y}^{\alpha} \pdv_{\xi}^{\beta} \tensor{(a_{\eta})}{^{K}_{J}}(\xi, y)| \leq C_{m_{K}, m_{K}', \alpha, \beta} A_{S} A_{\eta} (1 + A_{x})^{c (d+|\alpha|+|\beta|+m_{K}+m_{K}')} L_{y}^{-|\alpha|} (((1+A_{x}) L_{\eta})^{-1} |\xi|)^{-m-|\beta|} 
\end{align*}
where $y \in \R^{d}$, $|\alpha| + |\beta| \leq \min \{M_{x}, M_{S}, M_{\eta}\} - C_{m_{K}, m_{K}'}$.
\end{theorem}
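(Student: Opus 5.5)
The plan is to obtain $K_\eta$ by dualizing the recovery-on-curves identity supplied by the graded augmented system (Proposition~\ref{prop:aug_sys_RC}) and then averaging it against $\eta(y,y_1)$. Fix a test function $\varphi$ (an $\C^{r_0}$-valued function). Integrate the first-order system \eqref{eq:aug-pde} along the curve $s\mapsto x(y,y_1,s)$ from $y_1$ to $y$. Writing $\Phi_{\Af}(y)$ through the fundamental solution of the transport ODE $\frac{d}{ds}\Phi=\dot x^i\Bf_i\Phi+\dots$ gives
\[
\varphi_J(y)=\sum_{\Af}(Z^{\Af})_J(y,y_1)\,\Phi_{\Af}(y_1)+\int_0^1\sum_{\gamma,K}\tensor{S}{_J^{(\gamma,K)}}(y,y_1,s)\,\pdv^\gamma_x(P^*\varphi)_K(x(y,y_1,s))\,\dd s .
\]
Here $Z^{\Af}$ is the transported frame and $S$ collects the $\Cf_i$ contributions. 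In Case~(1) the boundary term is absent, because the kernel of $P^*$ is trivial on the relevant jets, so there is no boundary contribution. In Case~(2) we keep it.

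Next, multiply the identity by $\eta(y,y_1)$ and integrate in $y_1$, using $\int\eta\,\dd y_1=1$. Replace $\Phi_{\Af}(y_1)$ by $\sum c[\Phi_{\Af}]^{(\alpha,J')}\pdv^\alpha\varphi_{J'}(y_1)$, which is valid by \ref{hyp:aug2}. With $\psi=P^*\varphi$, the definition \eqref{eq:smth-avg-ker} then reads $\langle K_\eta(\cdot,y),P^*\varphi\rangle=\varphi(y)-\langle b_\eta(\cdot,y),\varphi\rangle$. By duality this is $\Pcal K_\eta(\cdot,y)=\delta_0(\cdot-y)-b_\eta(\cdot,y)$ on $U$. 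Integrating by parts the derivatives $\pdv^\alpha$ off $\varphi$ produces the second, explicit formula for $b_\eta$ as $\sum(-1)^{|\alpha|}\pdv_x^\alpha(c\,Z\eta)$. The support statement for $b_\eta$ follows immediately. Its derivative bounds follow from the Leibniz rule together with the assumed bounds $A_g,A_Z,A_\eta$: the factors $L_y$ and $|x-y|$ are exactly matched against the derivatives in $(\pdv_y+\pdv_x)$ and in $\pdv_x$.

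For the pointwise bound on $K_\eta$, I would integrate by parts the $\pdv_x^\gamma$ in \eqref{eq:smth-avg-ker} onto the kernel. I would then change variables $(y_1,s)\mapsto x=x(y,y_1,s)$ for fixed $s$. The $A_x$ condition makes $y_1\mapsto x$ a diffeomorphism with Jacobian comparable to $s^{d}$ up to factors $(1+A_x)^{O(d)}$, and it gives $|x-y|\sim s|y_1-y|$. The factor $A_S|y_1-y|^{m_K+|\gamma|}s^{m_K+|\gamma|-1}$ combined with $\eta\lesssim A_\eta L_\eta^{-d}$ then yields, after integrating in $s$ over the range $s\gtrsim|x-y|/L_\eta$, the bound $|x-y|^{-d+m_K}$. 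The support statements for $K_\eta$ are read off directly from the formula.

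The main obstacle is the symbol estimate. For it I would write $a_\eta(\xi,y)=\int\!\int_0^1 S\,\eta\,(i\xi)^\gamma e^{-i\xi\cdot(x(y,y_1,s)-y)}\,\dd s\,\dd y_1$. The step is to trade powers of $\xi$ for derivatives. I would use the operator $\frac{\xi\cdot\pdv_{y_1}}{i|\xi|^2}$, applied to the phase via the inverse of $\pdv_{y_1}x$, which is controlled by $A_x$ and has size comparable to $s$. Integrating by parts in $y_1$ a total of $N$ times then gains $((1+A_x)L_\eta)^{-N}|\xi|^{-N}$ against derivatives of $\eta$ and $S$. Those derivative losses are absorbed through the $A_\eta$ bound with $L_b$ (the second condition) and the $A_S$ bound. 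The delicate part is the small-$s$ region, where the gain degenerates. There I would split dyadically in $s|\xi|L_\eta$. On the piece $s|\xi|L_\eta\lesssim1$, the factor $s^{m_K+|\gamma|-1}$ alone already provides the decay $|\xi|^{-m}$. The $\pdv_\xi^\beta$ and $\pdv_y^\alpha$ derivatives are handled the same way: each $\pdv_\xi$ brings down a factor $x-y$ of size $s|y_1-y|$, and each $\pdv_y$ costs a factor $L_y^{-1}$.
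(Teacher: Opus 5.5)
The paper gives no proof of this statement; it is quoted from \cite{isett2025integral}, so I am judging your proposal on its own terms. Your architecture is the natural one: average the recovery-on-curves identity against $\eta$ and dualize, change variables $y_1\mapsto x(y,y_1,s)$ for the kernel bound, then Fourier-analyze for the symbol. Two steps, however, fail as written.

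\textbf{Case~(1).} The boundary term does not vanish because ``the kernel of $P^*$ is trivial on the relevant jets.'' No such hypothesis is made; in this paper $\ker P^*$ has dimension $(d+1)(d+2)/2$. Even a trivial $\ker P^*$ would not make $\Phi_{\Af}(y_1)$ vanish for a general test function. What removes the term is the displayed hypothesis $\overline U\cap\overline{\bigcup_{y\in U}\supp_{y_1}\eta(y,y_1)}=\emptyset$, which your argument never uses. The boundary term $\sum_{\Af}(Z^{\Af})_J(y,y_1)\Phi_{\Af}(y_1)$ depends only on finitely many derivatives of $\varphi$ at $y_1$. For $\varphi\in C^\infty_c(U)$ and $y_1$ in the support of $\eta(y,\cdot)$, all of these vanish. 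Hence $\langle K_\eta(\cdot,y),P^*\varphi\rangle=\varphi(y)$ for every test function on $U$.

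\textbf{Symbol estimate.} An integration by parts in $y_1$ gains $(s|\xi|)^{-1}$ from the phase but puts a $y_1$-derivative on $S\eta$. The $A_S$ hypothesis, like the $M_\eta$-condition on $\eta$, only gives $|\pdv_{y_1}^{\beta}S|\lesssim A_S|y_1-y|^{m_K+|\gamma|-|\beta|}s^{m_K+|\gamma|-1}$. So the true gain per step is $(s|\xi||y_1-y|)^{-1}$, not $((1+A_x)L_\eta|\xi|)^{-1}$. Where $|y_1-y|\lesssim(s|\xi|)^{-1}$ there is no gain at all. That region does occur in Case~(2), where the support of $\eta(y,\cdot)$ may contain $y$; $\eta_1$ in the first operator of this paper is an example.

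Your split in $s|\xi|L_\eta$ does not isolate this region. Switching to the $L_b$-condition does not help either: it says nothing about $S$, and even for $\eta$ it costs powers of $L_\eta/L_b$. Note that the stated range $\min\{M_x,M_S,M_\eta\}$ does not involve $M'_\eta$; the $L_b$-condition is what Lemma~\ref{lem:b_eta_5.17} uses.

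The repair is to localize in $|\xi||x(y,y_1,s)-y|\sim s|\xi||y_1-y|$:
\begin{itemize}
\item Where this quantity is $\lesssim 1$, use the size bound, together with $|\eta|\le A_\eta L_\eta^{-d}$ and the fact that the $y_1$-region has measure $\lesssim (s|\xi|)^{-d}$. This sums to $|\xi|^{-m_K}$.
\item Elsewhere, integrate by parts $N>m_K+|\gamma|$ times, using the gain $(s|\xi||y_1-y|)^{-1}$ per step.
\end{itemize}
Equivalently, and more cleanly, push your change of variables further to the weighted bounds $|L_y^{|\alpha|}|x-y|^{|\beta|}(\pdv_y+\pdv_x)^{\alpha}\pdv_x^{\beta}K_\eta(x,y)|\lesssim|x-y|^{-d+m_K}$. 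Then convert these to symbol bounds by a dyadic decomposition in $|x-y|$.

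Relatedly, $\pdv_y a_\eta$ costs only $L_y^{-1}$ after you substitute $y_1=y+h$, so that $\pdv_y$ acts as $\pdv_y+\pdv_{y_1}$. For straight lines this operator annihilates $x(y,y_1,s)-y$. A bare $\pdv_y$ falling on the phase produces a factor of $\xi$.
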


We note that as a result of Theorem~\ref{thm:smth_ker_5.11}, we have the solution operator bound
\begin{equation*}
\|\tensor{(S_{\eta})}{^{K}_{J}}\|_{W^{s, p} \to W^{s+m_{K}, p}} \leq C_{d, m_{K}, m_{K}', s, p} A_{S} A_{\eta} (1 + A_{x})^{C (d + m_{K} + m_{K'} + s)}.
\end{equation*}

In addition, in this general setting there is also a corresponding bound on the smoothing operator $\Bcal_{\eta}$, provided we have additional assumptions:

\begin{enumerate}[label=($b_{y_{1}}$-\arabic*), series=by1condition]
\item {\bf Structure of $b_{y_{1}}$, quantitative.} \label{hyp:b-y1-q}
For every $(y, y_{1}) \in W$, there exists a distribution $b_{y_{1}}(\cdot, y)$ of the form
\begin{equation}
\tensor{(b_{y_{1}})}{^{J'}_{J}}(x, y) = \sum_{\mathbf{A} \in \Acal} (Z^{\mathbf{A}})_{J}(y, y_{1}) (g_{\mathbf{A}})^{J'}(x, y_{1})
\end{equation}
for some finite index set $\Acal$ (independent of $y, y_{1}$), which satisfies the relation 
\begin{equation} \label{eq:crc-x-trunc}
	\varphi_{J}(y) = \int_{0}^{1} \sum_{\gamma} \tensor{S}{_{J}^{(\gamma, K)}}(y, y_{1}, s) (\pdv_{x}^{\gamma} \Pcal^{\ast} \varphi)(x(y, y_{1}, s)) \, \dd s + \langle\tensor{(b_{y_{1}})}{^{J'}_{J}}(\cdot, y), \varphi_{J'}\rangle,
\end{equation}
for every $(y, y_{1}) \in W$ with $y \in U$. Moreover, each $Z^{\mathbf{A}}$ obeys
\begin{align*}
	|L_{y}^{|\alpha|} |y- y_{1}|^{|\beta|} (\pdv_{y} + \pdv_{y_{1}})^{\alpha} \pdv_{y_{1}}^{\beta} Z^{\mathbf{A}}(y, y_{1})| \leq A_{Z} \quad \hbox{ for } |\alpha| + |\beta| \leq M_{Z},
\end{align*}
and each $g_{\mathbf{A}}$ takes the form (for $J'=1, \ldots, r_{0}$)
\begin{align*}
(g_{\mathbf{A}})^{J'}(x, y_{1}) = \sum_{\alpha} c[g_{\mathbf{A}}]^{(\alpha, J')}(y_{1}) \pdv^{\alpha} \delta_{0}(x-y_{1}),
\end{align*}
where the coefficients $c[g_{\mathbf{A}}]^{(\alpha, J')}$ are zero except if $|\alpha| \leq \max_{K} (m_{K} + m'_{K})-1$ and obey
\begin{equation*}
	|\pdv_{y_{1}}^{\beta} c[g_{\mathbf{A}}]^{(\alpha, J')}| \leq A_{g} \quad \hbox{ for } |\beta| \leq M_{g}.
\end{equation*}

\item {\bf Local regularity of $Z^{\mathbf{A}}$.} \label{hyp:b-y1-smth}
\ref{hyp:b-y1-q} holds with $Z^{\mathbf{A}}$ obeying the following additional bounds:
\begin{align*}
	|L_{y}^{|\alpha|} L_{b}^{|\beta|} (\pdv_{y} + \pdv_{y_{1}})^{\alpha} \pdv_{y_{1}}^{\beta} Z^{\mathbf{A}}(y, y_{1})| \leq A_{Z}  \quad \hbox{ for } |\alpha| + |\beta| \leq M_{Z}'
\end{align*}
\end{enumerate}

We note as in \cite[Section 6]{isett2025integral} that our previous assumptions on $A_x$ actually suffice for both of these conditions to hold, so there is nothing in addition to prove.

\begin{lemma}[Lemma 5.17 in \cite{isett2025integral}]\label{lem:b_eta_5.17}
Assume that $U,W,x,\eta,P,S,$ and $b_{y_1}$ satisfy smoothness assumptions $(C^{\infty})$.
Moreover, assume that $\overline{U}$ is $x$-star-shaped with respect to $\supp \eta$ in the sense that 
\[\overline{\bigcup_{(y,y_1) \in \supp \eta} x(y,y_1,[0,1])} \subseteq \overline{U}.\]
Then the Bogovskii-type solution operator of Theorem~\ref{thm:sol_op_5.15} $S_{\eta}$ satisfies $PS_{\eta}f=f-{\Bcal}_{\eta}f$, where
\par
{\bf $\Bcal_{\eta}$ is smoothing.} The integral kernel $\tensor{(b_{\eta})}{^{J'}_{J}}(x, y)$ for $\Bcal_{\eta}$ is smooth and satisfies
\[|L_y^{|\alpha|}L_b^{|\beta|}(\pdv_y + \pdv_x)^{\alpha}\pdv_x^{\beta}\tensor{(b_{\eta})}{^{J'}_{J}}(x, y)| \leq C_{\# \Acal, m_K + m_K'}A_g A_Z A_{\eta},  \]
\[\supp_x \tensor{(b_{\eta})}{^{J'}_{J}}(x, y) \subseteq \supp{y_1} \eta(y,y_1), \qquad \text{for all}\hspace{0.1cm} y \in U, \]
where $A_g, A_Z, A_{\eta}$ correspond to $M_g, M_Z, M_{\eta}, M_Z', M_{\eta}' \geq |\alpha|+|\beta| + C_{d, m+m_1}$.
\end{lemma}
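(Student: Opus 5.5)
The plan is to derive the identity $PS_\eta f = f - \Bcal_\eta f$ by duality from the pointwise recovery-on-curves formula \eqref{eq:crc-x-trunc}. After that, the smoothing bounds will be read off from the explicit form of the kernel $b_\eta$.

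First, fix $y \in U$ and a test tensor $\varphi$. Multiply \eqref{eq:crc-x-trunc} by $\eta(y,y_1)$ and integrate in $y_1$. Since $\int \eta(y,y_1)\,\dd y_1 = 1$, this gives
\[
\varphi_J(y) = \int \tensor{(K_\eta)}{^K_J}(x,y)\,(\Pcal^*\varphi)_K(x)\,\dd x + \langle \tensor{(b_\eta)}{^{J'}_J}(\cdot,y),\varphi_{J'}\rangle,
\]
where $K_\eta$ is the kernel \eqref{eq:smth-avg-ker} of Theorem~\ref{thm:smth_ker_5.11}, and
\[
\tensor{(b_\eta)}{^{J'}_J}(x,y) = \sum_{\Af}\int (Z^{\Af})_J(y,y_1)\,\eta(y,y_1)\,(g_{\Af})^{J'}(x,y_1)\,\dd y_1 .
\]
Two points need care here. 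The exchange of the $s$, $y_1$ and $x$ integrations is justified by the local integrability bound $|K_\eta(x,y)| \lesssim |x-y|^{-d+m_K}$ from Theorem~\ref{thm:smth_ker_5.11}. The $x$-star-shapedness hypothesis guarantees that every curve $x(y,y_1,[0,1])$ with $(y,y_1) \in \supp\eta$ stays in $\overline U$, so $\Pcal^*\varphi$ is only ever evaluated inside the domain. Reading the identity distributionally in $y$ and taking adjoints then yields $\Pcal S_\eta f = f - \Bcal_\eta f$ on $U$, where $S_\eta f(x) = \int K_\eta(x,y) f(y)\,\dd y$ and $\Bcal_\eta$ has kernel $b_\eta$.

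Next I would make the kernel of $\Bcal_\eta$ explicit. By \ref{hyp:b-y1-q}, each $g_{\Af}$ is a finite sum $\sum_\alpha c[g_{\Af}]^{(\alpha,J')}(y_1)\,\pdv^\alpha\delta_0(x-y_1)$. Substituting this and integrating the $y_1$-integral against the delta derivatives gives
\[
b_\eta(x,y) = \sum_{\Af}\sum_\alpha (-1)^{|\alpha|}\,\pdv_x^\alpha\bigl(c[g_{\Af}](x)\,Z^{\Af}(y,x)\,\eta(y,x)\bigr),
\]
which is the second formula in Theorem~\ref{thm:smth_ker_5.11}. The support claim $\supp_x b_\eta(\cdot,y) \subseteq \supp_{y_1}\eta(y,\cdot)$ is then immediate.

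For the smoothness estimate I would apply $L_y^{|\alpha|}L_b^{|\beta|}(\pdv_y+\pdv_x)^\alpha\pdv_x^\beta$ to this expression and distribute the derivatives by the Leibniz rule. The combination $\pdv_y+\pdv_x$ acting on $Z^{\Af}(y,x)$ or $\eta(y,x)$ is exactly $\pdv_y+\pdv_{y_1}$ evaluated at $y_1 = x$. The remaining $\pdv_x$ derivatives, together with the at most $\max_K(m_K+m_K')-1$ extra derivatives coming from $g_{\Af}$, are pure $\pdv_{y_1}$ derivatives. The factors then split as follows:
\begin{itemize}
\item derivatives on $c[g_{\Af}]$ are controlled by $A_g$;
\item derivatives on $Z^{\Af}$ are controlled by $A_Z$ through \ref{hyp:b-y1-smth}, which is stated at scale $L_b$;
\item derivatives on $\eta$ are controlled by $A_\eta$ through the $L_b$-scaled hypothesis with $M_\eta'$.
\end{itemize}
This produces the bound $C_{\#\Acal,m_K+m_K'}A_gA_ZA_\eta$, provided the derivative budgets satisfy $M_g, M_Z', M_\eta' \geq |\alpha|+|\beta|+C$.

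The step I expect to be the main obstacle is the first one: justifying $\Pcal K_\eta(\cdot,y) = \delta_0(\cdot-y) - b_\eta(\cdot,y)$ rigorously near the diagonal, where $K_\eta$ is singular, and checking that no boundary contributions arise. I would handle it by invoking Theorem~\ref{thm:smth_ker_5.11} for the kernel estimates, and by using star-shapedness to keep all test functions compactly supported in $U$ when passing to adjoints.
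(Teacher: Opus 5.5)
The paper gives no proof of its own here---it imports the statement from \cite{isett2025integral}. Your argument is essentially the standard derivation of that lemma and is correct in substance: average \eqref{eq:crc-x-trunc} against $\eta$ to get $\Pcal K_\eta(\cdot,y)=\delta_0(\cdot-y)-b_\eta(\cdot,y)$, read off $b_\eta$ explicitly, and distribute derivatives using \ref{hyp:b-y1-smth} and the $M_\eta'$ bound on $\eta$. Note that the identity $\Pcal K_\eta(\cdot,y)=\delta_0(\cdot-y)-b_\eta(\cdot,y)$ is already part of the conclusion of Theorem~\ref{thm:smth_ker_5.11}, so it is not really the obstacle you anticipate. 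The genuinely new content is only the upgrade from the $|x-y|$-scale bounds of Theorem~\ref{thm:smth_ker_5.11} to $L_b$-scale smoothness.

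The one thing to repair is your last line. Carried out honestly with the hypotheses as stated, the Leibniz step yields $C A_gA_ZA_\eta L_\eta^{-d}$, times a factor $L_b^{-1}$ for each unweighted delta-derivative coming from $g_{\Af}$, and with $L_y$, $L_b$ weights falling on the unscaled bounds for $c[g_{\Af}]$. So the bare constant $C_{\#\Acal,m_K+m_K'}A_gA_ZA_\eta$ holds only under the unit-scale normalization implicit in the source, and you should track these factors rather than assert it.
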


Note that $(C^{\infty})$ is a large collection of assumptions, but these will typically be easy to satisfy in our case with straight line paths $x$ and smoothing operators $\eta$ localized to a unit-sized ball. 
For straight-line paths, for example, $A_x = 1$ which translates to $O(1)$ bounds on $A_g$ and $A_Z$ as well.

We also note that $\Bcal_{\eta}$ can be approximated by finite-rank operators, from which the authors of \cite{isett2025integral} obtain the following, more general version of Theorem~\ref{thm:sol_op_5.15}: 

\begin{theorem}[Theorem 1.2 in \cite{isett2025integral}]\label{thm:sol_op_existence}
Let $U \subset \R^d$ be a connected, bounded open subset. Let $P$ satisfy the recovery on curves condition for an admissible family of curves $x(y,y_1,s)$, meaning that one can linearly  recover the value of $\phi$ at $y$ in terms of its jet at $y_1$ and the jet of $P^*\phi$ along $x(y,y_1, \cdot)$.
Suppose that $\overline{U}$ is $x$-star shaped with respect to $\overline{U}_1$, where $\overline{U}_1 \subseteq U$ and $U_1$ is an open subset of $U$ such that
\[\overline{\bigcup_{y \in \overline{U}, y_1 \in \overline{U_1}} x(y,y_1, [0,1])} \subseteq U.\]
Then up to a finite-dimensional cokernel, there exists an integral kernel $\tilde{K}(x,y)$ such that $\tilde{S}f(x) = \int_U \tilde{K}(x,y) f(y)dy$ for $f \in C^{\infty}_c(U; \C^{s_0})$ satisfies
\[P\tilde{S}f = f - \sum_{A \in \{1, \dots, \dim \ker P^* \}} w_A \langle Z^A, f \rangle \qquad \text{for all} \hspace{0.2cm} f \in C_c^{\infty}(U; \C^{s_0}).\]
for $w_A \in C_{c}^{\infty}(U_1; \C^{r_0})$.
\end{theorem}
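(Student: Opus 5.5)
The plan is to build the operator in two stages. First, use the averaged recovery-on-curves formula to get a solution operator modulo a smoothing remainder. Second, use the compactness of that remainder to reduce it to a finite-rank correction, which we then identify with the cokernel of $P$.

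\textbf{Stage one: averaged recovery formula.} Fix a smooth averaging function $\eta(y,y_1)$ with $\int \eta(y,y_1)\,\dd y_1 = 1$ and $\supp_{y_1}\eta(y,\cdot) \subseteq U_1$. The recovery on curves condition gives, for each pair $(y,y_1)$, the identity \eqref{eq:crc-x-trunc}. Multiplying by $\eta(y,y_1)$ and integrating in $y_1$ produces an integral kernel $K_\eta$ of the form \eqref{eq:smth-avg-ker}. The $x$-star-shapedness hypothesis
\[\overline{\bigcup_{y \in \overline{U},\, y_1 \in \overline{U_1}} x(y,y_1,[0,1])} \subseteq U\]
guarantees that $\supp K_\eta(\cdot,y) \subseteq U$ for every $y$. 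By Theorem~\ref{thm:smth_ker_5.11} and Lemma~\ref{lem:b_eta_5.17}, the operator $S_\eta f = \int K_\eta(\cdot,y) f(y)\,\dd y$ satisfies
\[P S_\eta f = f - \Bcal_\eta f,\]
where $\Bcal_\eta$ has a smooth kernel $b_\eta(x,y)$ with $\supp_x b_\eta \subseteq U_1$.

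\textbf{Stage two: removing the smoothing remainder.} Because $U$ is bounded and the kernel $b_\eta$ is smooth, $\Bcal_\eta$ is compact on $L^2(U)$ and maps into $C_c^\infty(U_1)$.
\begin{enumerate}
    \item Approximate $b_\eta$ in $C^\infty$ by a finite sum $\sum_{k \le N} a_k(x)\beta_k(y)$ with $a_k \in C_c^\infty(U_1)$. Call the resulting finite-rank operator $F$; the difference $E = \Bcal_\eta - F$ then has operator norm less than $1/2$ on every relevant Sobolev space.
    \item Define $\tilde S = S_\eta (I - E)^{-1}$, using the Neumann series for $(I-E)^{-1}$. This series preserves compact support in $U$ and smoothness, because $E$ has a smooth kernel supported in $U_1 \times U$.
    \item A direct computation gives
    \[P\tilde S f = f - F(I-E)^{-1} f.\]
    The right-hand correction is of the form $\sum_k a_k\, \ell_k(f)$ with $a_k \in C_c^\infty(U_1)$ and $\ell_k$ continuous linear functionals.
\end{enumerate}

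\textbf{Stage three: identifying the finite-rank term with the cokernel.} The remaining task is to show that the correction can be rewritten as $\sum_A w_A \langle Z^A, f\rangle$ with exactly $\dim\ker P^*$ terms.
\begin{enumerate}
    \item The functionals $\ell_k$ vanish on the range $\{Pu : u \in C_c^\infty(U)\}$ modulo the span of the $a_k$. Hence the quotient $C_c^\infty(U)/P(C_c^\infty(U))$ is finite dimensional.
    \item Its annihilator is precisely $\ker P^*$ acting on smooth functions on $U$: a functional vanishing on $P(C_c^\infty)$ is a distributional solution of $P^*Z=0$. It is smooth because the recovery formula expresses $Z$ through its jet at one point. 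In particular, the recovery formula forces $\dim\ker P^* < \infty$.
    \item Choose a basis $Z^A$ of $\ker P^*$ and dual functions $w_A \in C_c^\infty(U_1)$ with $\langle Z^A, w_B\rangle = \delta^A_B$. Replace the finite-rank projection by $f \mapsto \sum_A w_A\langle Z^A,f\rangle$. The difference between the two projections annihilates $\ker P^*$, so it lies in the range of $P$ on $C_c^\infty(U)$; choose a preimage in $C_c^\infty(U)$ for each of finitely many basis elements and absorb these into $\tilde S$.
\end{enumerate}

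\textbf{Main obstacle.} The delicate step is stage three. We must show that $\mathrm{range}(P)$ has finite codimension, cut out exactly by $\ker P^*$, and that each such codimension-defect can be repaired by a function supported in $U_1$. I would handle this by pairing the identity $P\tilde S f = f - \sum_k a_k \ell_k(f)$ against elements of $\ker P^*$. Since $\langle Z, P\tilde S f\rangle = \langle P^*Z, \tilde S f\rangle = 0$ for every $Z \in \ker P^*$, this pairing shows that the cokernel injects into $\mathrm{span}(a_k)$. Connectedness of $U$ is used here: the recovery formula then determines $Z$ globally from its jet on $U_1$, so $\ker P^*$ restricts injectively to $U_1$ and the dual family $w_A$ exists.
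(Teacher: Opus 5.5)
Your proposal is correct and follows essentially the route the paper indicates. The paper gives no proof of its own beyond the remark that the smoothing remainder $\Bcal_\eta$ of Lemma~\ref{lem:b_eta_5.17} can be approximated by finite-rank operators, which is exactly your Stage two; your Stage three is the standard identification of the resulting finite-codimensional defect of $P(C_c^\infty(U))$ with $\ker P^*$. The one justification to tighten is the smoothness of distributional solutions of $P^*Z=0$, since a distribution has no pointwise jet: pairing $PS_\eta f = f - \Bcal_\eta f$ with $Z$ gives $Z(y) = \langle Z, b_\eta(\cdot,y)\rangle$, which is smooth in $y$ and, because $\supp b_\eta(\cdot,y)\subseteq U_1$, also directly gives the injectivity of restriction to $U_1$ that you need for the dual family $w_A$.
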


We see that this theorem holds for connected, bounded open subsets of $\R^d$ such that $\overline{U}$ is $x$-star shaped with respect to some $\overline{U}_1$.
One complication is that the thin hyperbolic annulus by itself is not even convex, so one must be clever in how to construct the Solution operators.
One approach is given in the proof of Lemma 2.2 in \cite{mao2023} by constructing a partition of unity over finitely many convex ball subsets, constructing a family of solution operators $S_i$ on the $i$th ball, and declaring that $Sf = \sum_i S_i f_i$ is a valid solution operator up to finite dimensional cokernel, where requiring them to agree on the intersections reduces to a linear algebra problem. 
It is worth mentioning that this only works due to the flexibility of having a full cokernel, which makes the more general problem of metric gluing substantially more difficult.
 
We finally consider the specialized case when $\eta = \eta(y_1)$ is independent of $y$, in which case there is a stronger result

\begin{theorem}[Theorem 5.15 in \cite{isett2025integral}, Full solution operator and Poincar\'e-type inequality, completely integrable case]\label{thm:sol_op_5.15}
Assume that $U,W,x, \eta,P,S, b_{y_1}$ satisfy the smoothness conditions $(C^{\infty})$.
Assume furthermore that $\eta = \eta(y_1)$ and that $\overline{U}$ is $x$-star-shaped with respect to $\supp_{\eta}$, i.e. 
\[\bigcup_{y \in \overline{U}, y_1 \in \supp \eta} x(y,y_1,[0,1]) \subseteq \overline{U}. \]
If the formal cokernel of $P$ satisfies $\dim \ker P^* = \#\Acal$, then there is a bounded operator $S_{\eta}: H^s(U) \to H^{s+m_K}(U)$ for every $1 < p < \infty$, and moreover we have
\[PS_{\eta}f = f - \sum_{\Af \in \Acal} (\zeta_{\eta})_{\Af} \langle \Zf^{\Af},f\rangle. \]
for all $f \in H^s(U)$, where $(\zeta_{\eta})_{\Af}$ is a smooth function with $\supp(\zeta_{\eta})_{\Af} \subseteq \supp \eta$
with $\{\Zf^\Af\}_{\Af \in \Acal}$ a basis of $\ker P^*$ and $\zeta_{\Af}$.
In addition, 
\[\supp S_{\eta}f \subseteq \overline{\bigcup_{y \in \supp f, y_1 \in \supp \eta}x(y,y_1,[0,1])} \subseteq \overline{U},\]
and $||\tensor {(S_{\eta})}{^K_J}||_{H^s(U) \to H^{s+m_K}(U)}$ is bounded by $C_{d,m_K,m_K',s,p}A_S A_{\eta}(1+A_x)^{C(d+m_K+m_K'+s)},$ with $A_S, A_{\eta}$, and $A_x$ corresponding to $M_x$, $M_{\eta}$, $M_S \geq s + C_{d,m_K,m_K'}$.
\end{theorem}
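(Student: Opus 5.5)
The plan is to build $S_{\eta}$ by averaging the pointwise recovery formula \eqref{eq:crc-x-trunc} against $\eta(y_1)$ and then dualizing. Fix $y\in U$. For each $y_1\in\supp\eta$, \ref{hyp:b-y1-q} gives
\[\varphi_J(y)=\int_0^1 \sum_\gamma S_J^{(\gamma,K)}(y,y_1,s)(\pdv^\gamma P^*\varphi)(x(y,y_1,s))\,\dd s+\langle b_{y_1}(\cdot,y),\varphi\rangle .\]
Multiplying by $\eta(y_1)$ and integrating in $y_1$ produces the kernel $K_\eta$ of \eqref{eq:smth-avg-ker}, together with the averaged remainder $b_\eta(x,y)=\sum_{\Af}\int Z^{\Af}(y,y_1)\eta(y_1)g_{\Af}(x,y_1)\,\dd y_1$. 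Pairing with $\varphi$ and moving $P^*$ across gives $PK_\eta(\cdot,y)=\delta_0(\cdot-y)-b_\eta(\cdot,y)$ on $U$, exactly as in Theorem~\ref{thm:smth_ker_5.11}. Setting $S_\eta f(x)=\int K_\eta(x,y)f(y)\,\dd y$ then gives $PS_\eta f=f-\Bcal_\eta f$.

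\textbf{Step 1: the remainder is finite rank.} This is where complete integrability and the $y$-independence of $\eta$ enter, and I expect it to be the main step. By \ref{hyp:aug3}, the homogeneous system ($P^*\varphi=0$) is a first-order linear ODE for $\Phi$ along any curve, so $\Phi(y)=T(y,y_1)\Phi(y_1)$ for a transport matrix $T$. If $\dim\ker P^*=\#\Acal$, the linear map $\ker P^*\to\C^{\#\Acal}$, $\varphi\mapsto\Phi(y_1)$, is injective (by uniqueness for the ODE) and hence bijective. So every initial value $\Phi(y_1)$ extends to a global kernel element. It follows that the connection is flat and $T(y,y_1)$ is path independent. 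Choosing a basis $\Zf^{\Af}$ of $\ker P^*$, I expect to write $Z^{\Af}(y,y_1)=\sum_{\Bf}\Zf^{\Bf}(y)M_{\Bf}{}^{\Af}(y_1)$ with $M$ smooth. Because $\eta$ does not depend on $y$, the $y$-dependence then factors out of the integral:
\[b_\eta(x,y)=\sum_{\Bf}\Zf^{\Bf}(y)(\zeta_\eta)_{\Bf}(x),\qquad (\zeta_\eta)_{\Bf}(x)=\sum_{\Af}\int\eta(y_1)M_{\Bf}{}^{\Af}(y_1)g_{\Af}(x,y_1)\,\dd y_1 .\]
Since $g_{\Af}(\cdot,y_1)$ is a finite sum of derivatives of $\delta_{y_1}$ with smooth coefficients, integrating by parts onto $\eta$ shows that $(\zeta_\eta)_{\Bf}$ is smooth and supported in $\supp\eta$. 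This gives $\Bcal_\eta f=\sum(\zeta_\eta)_{\Bf}\langle\Zf^{\Bf},f\rangle$.

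\textbf{Step 2: support and boundedness.} The support statement for $S_\eta f$ follows directly from the support properties of $K_\eta$ in Theorem~\ref{thm:smth_ker_5.11}, combined with the $x$-star-shapedness of $\overline U$. For boundedness, I would use the symbol estimate of Theorem~\ref{thm:smth_ker_5.11}:
\[|\pdv_y^\alpha\pdv_\xi^\beta a_\eta|\lesssim A_SA_\eta(1+A_x)^{C(\dots)}\langle\xi\rangle^{-m_K-|\beta|}.\]
This places $(S_\eta)^K{}_J$ in the class $S^{-m_K}_{1,0}$, with a finite number of symbol seminorms controlled. The standard $H^s\to H^{s+m_K}$ (and more generally $W^{s,p}$) boundedness of such pseudodifferential operators, which needs only finitely many derivatives, hence $M_x,M_\eta,M_S\ge s+C$, then gives the stated constant. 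The low-frequency part is handled by the local integrability bound $|K_\eta|\lesssim|x-y|^{-d+m_K}$ on the bounded set $U$.

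\textbf{Expected obstacle.} The delicate point is the factorization in Step 1. One must check that the $Z^{\Af}(y,y_1)$ produced by the recovery formula are genuinely the transported augmented variables, that is, the values at $y$ of kernel elements determined by data at $y_1$, and not merely some distribution with that structure. This requires tracking the boundary terms from integrating \eqref{eq:aug-pde} along $x(y,y_1,\cdot)$: the homogeneous part of the solution is $T(y,y_1)\Phi(y_1)$, and $\Phi(y_1)$ is a differential operator in $\varphi$ at $y_1$, which is exactly $g_{\Af}$. I would try to carry this out first.
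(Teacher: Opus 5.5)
Your architecture matches the standard argument behind this result; the paper itself only quotes it from \cite{isett2025integral}. You average the recovery identity \eqref{eq:crc-x-trunc} against $\eta(y_1)$ to get $PS_\eta f=f-\Bcal_\eta f$. You then show that $y$-independence of $\eta$ together with complete integrability makes $b_\eta(x,y)$ separate as $\sum_{\Bf}\Zf^{\Bf}(y)(\zeta_\eta)_{\Bf}(x)$. Finally you read off the support and $H^s\to H^{s+m_K}$ bounds from the kernel and symbol estimates of Theorem~\ref{thm:smth_ker_5.11}. Step~2 is fine, including your separate low-frequency treatment, which is needed because the symbol bound there degenerates as $|\xi|\to 0$.

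The gap is the factorization in Step~1. You leave it as an expectation, and the route you propose for it is narrower than the statement. The hypotheses only supply some remainder $b_{y_1}$ of the form \ref{hyp:b-y1-q} satisfying \eqref{eq:crc-x-trunc}. They do not say that $b_{y_1}$ was produced by Duhamel's formula for an augmented system. So identifying $Z^{\Af}(y,y_1)$ with rows of a transport matrix is an extra assumption: it holds in this paper's applications via Proposition~\ref{prop:aug_sys_RC}, but it is not given and not needed.

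The missing idea is to test \eqref{eq:crc-x-trunc} on the cokernel itself. For $\varphi\in\ker P^*$ the integral term vanishes, so for $y\in U$ and $y_1\in\supp\eta$,
\[
\varphi_J(y)=\sum_{\Af\in\Acal}Z^{\Af}_J(y,y_1)\,\langle g_{\Af}(\cdot,y_1),\varphi\rangle .
\]
Pick a basis $\{\Zf^{\Bf}\}_{\Bf\in\Acal}$ of $\ker P^*$; this is where $\dim\ker P^*=\#\Acal$ is used. Set $N_{\Af}{}^{\Bf}(y_1)\coloneqq\langle g_{\Af}(\cdot,y_1),\Zf^{\Bf}\rangle$, a square matrix that is smooth in $y_1$. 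If $N(y_1)c=0$, the display gives $\sum_{\Bf}c_{\Bf}\Zf^{\Bf}\equiv 0$ on $U$, so $c=0$. Thus $N(y_1)$ is invertible, with smooth inverse on the compact set $\supp\eta$. Applying the display to each $\Zf^{\Bf}$ gives $\Zf^{\Bf}_J(y)=\sum_{\Af}Z^{\Af}_J(y,y_1)N_{\Af}{}^{\Bf}(y_1)$, hence
\[
Z^{\Af}_J(y,y_1)=\sum_{\Bf}\Zf^{\Bf}_J(y)\,\bigl(N(y_1)^{-1}\bigr)_{\Bf}{}^{\Af}.
\]
This is exactly your factorization with $M=N^{-1}$. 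It holds for whatever $b_{y_1}$ the hypotheses provide and needs no ODE, flatness, or path-independence argument.

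Without complete integrability, $N(y_1)$ is a non-square injective matrix. The identity then only constrains $Z^{\Af}$ on its range, and the $y$- and $y_1$-dependence need not separate.

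With the factorization in hand, your averaging step goes through verbatim. The function $(\zeta_\eta)_{\Bf}=\sum_{\Af}\int\eta(y_1)\bigl(N(y_1)^{-1}\bigr)_{\Bf}{}^{\Af}g_{\Af}(\cdot,y_1)\,\dd y_1$ is a finite sum of derivatives of $c[g_{\Af}]\,(N^{-1})_{\Bf}{}^{\Af}\,\eta$. It is therefore smooth and supported in $\supp\eta$.
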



\nocite{*} 

\printbibliography 

\end{document}